\newcommand{\norm}[1]{\left\lVert#1\right\rVert}
\setlist[enumerate]{leftmargin=.5in}
\setlist[itemize]{leftmargin=.5in}
\crefname{hypothesis}{Hypothesis}{Hypotheses}
\title{The Signature Kernel is the solution of a Goursat PDE \thanks{Submitted to the editors \today.
\funding{CS was supported by the
EPSRC grant EP/R513295/1. All the authors were supported by DataSig under the ESPRC grant EP/S026347/1 and by the Alan Turing Institute under the EPSRC grant EP/N510129/1.}}}
\author{
Cristopher Salvi\thanks{University of Oxford \& The Alan Turing Institute 
(\email{cristopher.salvi@maths.ox.ac.uk},
\email{james.foster@maths.ox.ac.uk}, \email{terry.lyons@maths.ox.ac.uk}, \email{weixin.yang@maths.ox.ac.uk}).}
\and Thomas Cass\thanks{Imperial College London \& The Alan Turing Institute 
(\email{thomas.cass@imperial.ac.uk}).}
\and James Foster \footnotemark[2]
\and Terry Lyons\footnotemark[2]
\and Weixin Yang\footnotemark[2]
}
\begin{document}

\maketitle

\begin{abstract}
    Recently, there has been an increased interest in the development of kernel methods for learning with sequential data. The signature kernel is a learning tool with potential to handle irregularly sampled, multivariate time series. In \cite{kiraly2019kernels} the authors introduced a kernel trick for the truncated version of this kernel avoiding the exponential complexity that would have been involved in a direct computation. Here we show that for continuously differentiable paths, the signature kernel solves a hyperbolic PDE and recognize the connection with a well known class of differential equations known in the literature as Goursat problems. This Goursat PDE only depends on the increments of the input sequences, does not require the explicit computation of signatures and can be solved efficiently using state-of-the-art hyperbolic PDE numerical solvers, giving a kernel trick for the untruncated signature kernel, with the same raw complexity as the method from \cite{kiraly2019kernels}, but with the advantage that the PDE numerical scheme is well suited for \texttt{GPU} parallelization, which effectively reduces the complexity by a full order of magnitude in the length of the input sequences. In addition, we extend the previous analysis to the space of geometric rough paths and establish, using classical results from rough path theory, that the rough version of the signature kernel solves a rough integral equation analogous to the aforementioned Goursat problem. Finally, we empirically demonstrate the effectiveness of this PDE kernel as a machine learning tool in various data science applications dealing with sequential data. We release the library \texttt{sigkernel} publicly available at \url{https://github.com/crispitagorico/sigkernel}.
\end{abstract}

\begin{keywords}
Path signature, kernel, Goursat PDE, geometric rough path, rough integration, sequential data.
\end{keywords}

\begin{AMS}
60L10, 60L20
\end{AMS}

\section{Introduction}\label{sec:intro}

Nowadays, sequential data is being produced and stored at an unprecedented rate. Examples include daily fluctuations of asset prices in the stock market, medical and biological records, readings from mobile apps, weather measurements etc. An efficient learning algorithm must be able to handle data streams that are often irregularly sampled and/or partially observed and at the same time scale well with a high number of channels. \smallbreak

An important obstacle that most machine learning models have to face is the potential symmetry present in the data. In computer vision for example, a good model should be able to recognize an image even if the latter is rotated by a certain angle. The $3$D rotation group, often denoted $SO(3)$, is low dimensional ($3$), therefore it is relatively easy to add components to a model that build a rotation invariance. However, when dealing with sequential data one is confronted with a much bigger (infinite dimensional) group of symmetries given by all \emph{reparametrizations} of a path\footnote{Or its time-augmented version.} (i.e. continuous and increasing surjections from the time domain of the path to itself). For example, consider the reparametrization $\phi: [0,1] \to [0,1]$ given by $\phi(t) = t^2$ and the path $\gamma : [0,1] \to \mathbb{R}^2$ defined by $\gamma_t = (\gamma^x_t, \gamma^y_t)$ where $\gamma^x_t=\cos(10t)$ and $\gamma^y_t=\sin(3t)$. As it is clearly depicted in \cref{fig:reparam}, both channels ($\gamma^x,\gamma^y$) of $\gamma$ are individually affected by the reparametrization $\phi$, but the shape of the curve $\gamma$ is left unchanged.
\begin{figure}[h]
    \centering
    \makebox[\textwidth]{\includegraphics[scale=0.45]{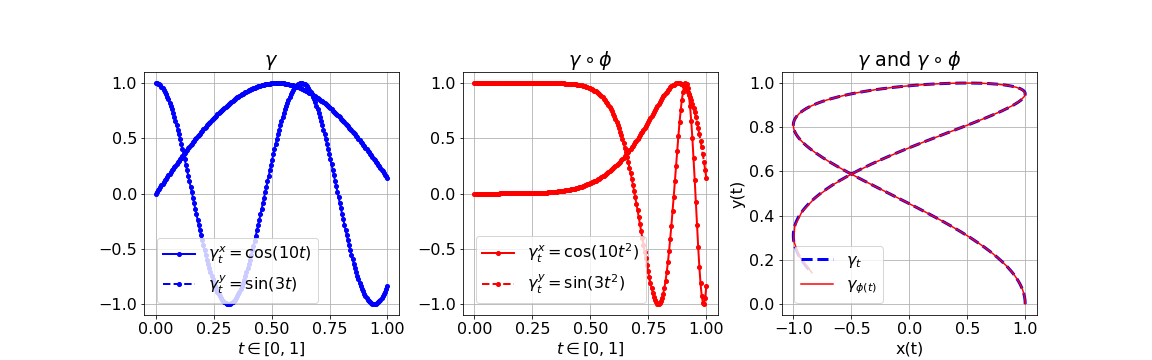}}
    \caption{{\small On the \textbf{left} are the individual channels $(\gamma^x, \gamma^y)$ of a $2$D paths $\gamma$. In the \textbf{middle} are the channels reparametrized under $\phi: t \mapsto t^2$. On the \textbf{right} are the path $\gamma$ and its reparametrized version $\gamma \circ \phi$. The two curves overlap, meaning that the reparametrization $\phi$ represents irrelevant information if one is interested in understanding the shape of $\gamma$.}}
    \label{fig:reparam}
\end{figure}

\begin{definition}\label{def:tensoralgebras}
Let $V$ be a Banach space. The spaces of formal polynomials and formal power series over $V$ are defined respectively as
\begin{equation}
    T(V) = \bigoplus_{k=0}^\infty V^{\otimes k} \quad \text{ and } \quad T((V)) = \prod_{k=0}^\infty V^{\otimes k}
\end{equation}
where $\otimes$ denotes the (classical) tensor product of vector spaces. Both $T(V)$ and $T((V))$ can be endowed with the operations of addition $+$ and multiplication $\otimes$ defined for any two elements $A = (a_{0}, a_{1}, \cdots)$ and $B = (b_{0}, b_{1}, \cdots)$ respectively as
\begin{align}
A + B &= \left(a_{0} + b_{0}, a_{1} + b_{1}, ... \right) \\ 
A \otimes B &= \left(c_{0}, c_{1}, c_{2}, ... \right), \ \ \text{ where } \ \ V^{\otimes k} \ni c_k = \sum_{i=0}^ka_i\otimes b_{k-i}, \ \ \forall k\geq 0 \label{eqn:tensor_product}
\end{align}
When endowed with these two operations and the natural action of $\mathbb{R}$ by $\lambda A = (\lambda a_0, \lambda a_1, ...)$, $T((V))$ becomes a real, non-commutative unital algebra with unit $\mathbf{1}=(1,0,0,...)$ called the tensor algebra. The truncated tensor algebra over $V$ of order $N \in \mathbb{N}$ is defined as the quotient $T^N(V) = T((V))/T_N$ by the ideal
\begin{equation}
    T_N=\{A=(a_0,a_1,...) \in T((V)):a_0=...=a_N=0\}
\end{equation} \medbreak
\end{definition}

\begin{definition}\label{def:signature}
Let $I \subset \mathbb{R}$ be a compact interval, $V$ a Banach space and let $x: I \to V$ be a continuous path of finite $p$-variation (\cref{def:p-variation}) with $p<2$. For any $s,t \in I$ such that $s \leq t$, the signature $S(x)_{[s,t]} \in T((V))$ of the path $x$ over the sub-interval $[s,t]$ is defined as the following infinite collection of iterated integrals
\begin{equation}\label{eqn:signature}
S(x)_{[s,t]} = \left(1, \underset{s<u_1<t}{\int} dx_{u_1}, ..., \underset{s<u_1<...<u_k<t}{\int ... \int}dx_{u_1}\otimes...\otimes dx_{u_k}, ... \right)
\end{equation}
\end{definition}

The \emph{signature} (\cref{def:signature}) of a path $x$ is invariant under reparametrization (i.e. $S(x) = S(x \circ \phi)$), therefore it acts on $x$ as a filter that systematically removes this troublesome, infinite dimensional group of symmetries.  Furthermore, it turns out that linear functionals acting on the range of the signature form an algebra (with pointwise multiplication) and separate points \cite[chapter 2]{lyons2004differential}. Hence, by the \emph{Stone-Weierstrass theorem}, for any compact set $C$ of continuous paths of bounded variation, the set of linear functionals on signatures of paths from $C$ is dense in the set of continuous real-valued functions on $C$. These two properties make the signature an ideal \emph{feature map} for data streams \cite{lyons2014roughICM}. \smallbreak

For any path $x$ of finite $p$-variation ($p>1$) the terms in the signature  \emph{decay factorially} according to the following uniform estimate \cite[Lemma 5.1]{lyons2014roughICM}
\begin{equation}\label{eqn:factorial_decay}
    \norm{\underset{s<u_1<...<u_k<t}{\int ... \int}dx_{u_1}\otimes...\otimes dx_{u_k}}_{V^{\otimes k}} \leq \frac{\norm{x}^k_{p,[s,t]}}{k!}
\end{equation}
where $\norm{\cdot}_{V^{\otimes k}}$ denotes any norm on $V^{\otimes k}$ and $\norm{x}_{p,[s,t]}$ denotes the $p$-variation of the path $x$ restricted to the interval $[s,t]$. Therefore, the collection of iterated integrals in the signature is \emph{graded}. This grading allows one to \emph{truncate} the signature at a finite level $N \in \mathbb{N}$ and consider only a finite collection of integrals as features extracted from $x$
\begin{equation}\label{eqn:truncated_signature}
    S^N(x)_{[s,t]} = \left(1, \underset{s<u_1<t}{\int} dx_{u_1}, ..., \underset{s<u_1<...<u_N<t}{\int ... \int}dx_{u_1}\otimes...\otimes dx_{u_N}\right) \in T^N(V)
\end{equation}
However, it is clear that the (truncated) signature has an exponential growth in the number of features, limiting its successful direct usage to machine learning applications where the ambient space of the data streams is relatively low dimensional \cite{arribas2018signature, morrill2020utilization, bonnier2019deep, yang2017developing, moore2019using, cochrane2021sk, lyons2019numerical}. \smallbreak

\emph{Kernel methods} \cite{hofmann2008kernel} have shown to be efficient learning techniques in situations where inputs are non-euclidean, high-dimensional (not necessarily sequential) and the number of training instances is limited \cite{shawe2004kernel} so that deep learning methods cannot be easily deployed. Most kernels that are used in practice can be computed efficiently without referring back to the corresponding feature map, a mechanism known as \emph{kernel trick}. When the data is sequential, the design of appropriate kernel functions is a notably challenging task \cite{cuturi2011fast}. In \cite{kiraly2019kernels} the authors introduce the \emph{truncated signature kernel} as the inner product of two truncated signatures and propose an efficient algorithm to compute this kernel starting from any ``static'' kernel on the ambient space of the input paths. \smallbreak

One of our goals will be to extend the results in \cite{kiraly2019kernels} and consider the \emph{ (untruncated) signature kernel} as an inner product of two (untruncated) signatures. For this, leveraging a fundamental property of the signature (\cref{thm:universal}), we prove in \cref{sec:BV} that if the two input paths are continuously differentiable then the signature kernel is the solution of a linear, second order, hyperbolic \emph{partial differential equation} (PDE). In \cref{sec:Goursat} we recognize the connection between the signature kernel PDE and a class of differential equations known in the literature as \emph{Goursat problems} \cite{goursat1916course}. This PDE represents effectively a kernel trick for the signature kernel and can be efficiently solved numerically leveraging any state-of-the-art hyperbolic PDE solvers; we provide ourselves a competitive finite difference explicit scheme and demonstrate the improvement in computational performance over existing approximation methods. In \cref{sec:geom} we extend the previous analysis to the much broader class of \emph{geometric rough paths}, and show that in this case the signature kernel satisfies an integral equation analogous to the aforementioned Goursat PDE. 
Finally in \cref{sec:applications}, we empirically demonstrate the effectiveness of the signature kernel on various data science applications dealing with sequential data. \smallbreak

We release the python library \texttt{sigkernel} implementing our signature PDE kernel and various other functionalities deriving from it. All the experiments presented in this paper are reproducible following the instructions in \url{https://github.com/crispitagorico/sigkernel}.

\begin{remark}
    A concise summary of rough path theory, covering the material necessary to follow the proofs in \cref{sec:geom}, is presented in \cref{appendix:RPT}. We note that an efficient algorithm for computing the truncated signature kernel was derived in \cite{kiraly2019kernels} and then used in \cite{Tth2019VariationalGP} in the context of Gaussian processes indexed on time series. Finally, we note that the article \cite{chevyrev2018signature} first treated the truncated signature kernel in the case of branched rough paths. Integration of two parameters rough integrals is also discussed in \cite{chouk2014rough}. \medbreak
\end{remark}

\section{The Signature Kernel is the solution of a hyperbolic PDE}\label{sec:BV}

In this section we present our main result, notably that the signature kernel evaluated at two continuously differentiable paths is the solution of a hyperbolic PDE. Throughout this section, we will denote by $C^1(I,V)$ the space of continuously differentiable paths defined over an interval $I=[u,u']$ and with values on a Banach space $V$. We will also use the lighter notation $S(x)_t$ to denote the signature of a path $x$ over the interval $[u,t]$, for any $t \in I$.  \medbreak

\begin{definition}
Let $V$ be a $d$-dimensional Banach space with canonical basis $\{e_1,...,e_d\}$. It is easy to verify that for any $k \geq 1$ the elements
\begin{equation}
    \{e_{i_1} \otimes ... \otimes e_{i_k} : (i_1, ... , i_k) \in \{1 , ..., d\}^k\}
\end{equation}
form a basis of $V^{\otimes k}$. Consider the inner product on $V^{\otimes k}$ defined on basis elements as
\begin{equation}
    \langle e_{i_1} \otimes ... \otimes e_{i_k}, e_{j_1} \otimes ... \otimes e_{j_k} \rangle_{V^{\otimes k}} = \langle e_{i_1}, e_{j_1} \rangle_V ... \langle e_{i_k}, e_{j_k} \rangle_V
\end{equation}

The inner product $\langle \cdot, \cdot \rangle_{V^{\otimes k}}$ can be extended by linearity to an inner product on $T(V)$ defined for any $A = (a_0, a_1, ...), B=(b_0,b_1,...)$ in $T(V)$ as
\begin{equation}
    \langle A, B \rangle = \sum_{k=0}^\infty \langle a_k, b_k \rangle_{V^{\otimes k}}
\end{equation}
\end{definition}

\begin{remark}
    It is easy to verify that the space $T((V))$ has the following algebraic property that we will refer to as \emph{coproduct property}. Let $m,n \in \mathbb{N}$ be two positive integers and consider any two elements $A,B \in T((V))$ and any two basis elements $e_{i_1}, e_{i_2} \in V$ seen as elements of $T((V))$, i.e. as $(0,e_{i_1}, 0,...)$ and $(0,e_{i_2}, 0,...)$ respectively. Then the following identity holds
    \begin{equation}\label{eqn:property}
        \langle A \otimes e_{i_1}, B \otimes e_{i_2} \rangle = \langle A, B \rangle \langle e_{i_1}, e_{i_2} \rangle_V 
    \end{equation}
\end{remark}

As anticipated in the introduction, the signature has a fundamental characterisation in terms of \emph{controlled differential equations} (CDEs) (see \cref{appendix:CDE} for a brief account on CDEs). In effect, the signature solves the universal differential equation stated in the next theorem, and therefore it can be equivalently defined as the \emph{non-commutative exponential}.

\begin{theorem}\cite[Lemma 2.10]{lyons2004differential}\label{thm:universal}
Let $x: I \to V$ be a continous path of finite $p$-variation for $p<2$ and $A = (a_0, a_1, ...) \in T(V)$. Consider the vector field $f:T(V) \to L(V,T(V))$\footnote{$L(V,T(V))$ denotes the space of bounded linear maps from $V$ to $T(V)$.}
\begin{equation}
    f(A)(v) = A \otimes v = (0,a_0\otimes v,a_1 \otimes v, ...) 
\end{equation}
Then, the unique solution to the following controlled differential equation 
\begin{equation}\label{eqn:univ_}
    dS_t = f(S_t)dx_t, \quad S_0=(1,0,0,...)
\end{equation}
is the signature $S(x)_t$ of the path $x$. \cref{eqn:univ_} can be formally rewritten as
    \begin{equation}\label{eqn:univ}
        dS(x)_t = S(x)_t \otimes dx_t, \hspace{0.5cm} S(x)_0=(1,0,0,...)
    \end{equation}
    which explains why the signature of $x$ can be described as its non-commutative exponential. \smallbreak
\end{theorem}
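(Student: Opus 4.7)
The plan is to verify equation (2.9) level by level in the tensor algebra and then conclude by a uniqueness argument. Read componentwise in $T((V))$, the projection of (2.9) to level $k \ge 0$ reads
$$dS^k_t = S^{k-1}_t \otimes dx_t, \qquad S^0_u = 1, \quad S^k_u = 0 \ \text{for } k \ge 1,$$
a triangular infinite system in which each level is driven only by the one below it. This triangular structure sidesteps the delicate question of how to make sense of the vector field $f$ directly on the formal-power-series space $T((V))$, reducing matters to an inductive sequence of one-dimensional (in the driver) Young problems.

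I would then verify that the iterated integrals $S^k(x)_t$ from the definition of the signature solve this system. Unpacking the iterated integral as a nested integral in which $u_k$ is the outermost variable, for $k \ge 1$,
$$S^k(x)_t = \int_u^t \left(\int_{u<u_1<\ldots<u_{k-1}<u_k} dx_{u_1}\otimes\ldots\otimes dx_{u_{k-1}}\right) \otimes dx_{u_k} = \int_u^t S^{k-1}(x)_{u_k} \otimes dx_{u_k},$$
which in Young-differential form (valid because $p<2$) is exactly $dS^k(x)_t = S^{k-1}(x)_t \otimes dx_t$. The initial conditions $S^0(x)_t \equiv 1$ and $S^k(x)_u = 0$ for $k\ge 1$ are immediate from the definition of the signature.

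For uniqueness I would argue inductively on $k$. Any candidate solution $\tilde S$ must satisfy $\tilde S^0 \equiv 1$ from the level-$0$ equation combined with $\tilde S^0_u = 1$. Assuming inductively that $\tilde S^{k-1} = S^{k-1}(x)$, the level-$k$ equation and its initial condition pin $\tilde S^k_t = \int_u^t S^{k-1}(x)_r \otimes dx_r$ down uniquely as a Young integral, which coincides with $S^k(x)_t$ by the identity displayed above. Hence $\tilde S = S(x)$.

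The main technical point to handle with care is the Young-integration regularity at each level: one must check that $S^{k-1}(x)$, viewed as a $V^{\otimes(k-1)}$-valued path, has finite $p$-variation compatible with that of $x$, so that the integral $\int S^{k-1}(x)\otimes dx$ exists in the Young sense and the fundamental theorem of calculus applies termwise when one passes between the integral equation and the differential equation. This is a routine consequence of the factorial-decay estimate (2.4), which bounds $p$-variation norms at each level by those of $x$, combined with the standard Young integration theory in the $p<2$ regime.
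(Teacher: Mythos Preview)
The paper does not supply its own proof of this theorem: it is quoted from \cite[Lemma 2.10]{lyons2004differential} and used as a black box in the proof of \cref{thm:PDE}. So there is nothing in the paper to compare your argument against directly.

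That said, your level-by-level approach is the standard and correct way to establish this result, and it is essentially how the cited lemma in \cite{lyons2004differential} is proved. The triangular decoupling you identify is the key structural observation: once the equation is projected onto $V^{\otimes k}$, the right-hand side depends only on level $k-1$, so existence and uniqueness follow by a straightforward induction on $k$, with each step a well-posed Young integral (since $p<2$ gives $2/p>1$). Your treatment of the regularity point is also correct in outline: the fact that $S^{k-1}(x)$ inherits finite $p$-variation follows inductively from the Young estimate for $\int S^{k-2}(x)\otimes dx$ (rather than directly from the factorial decay \cref{eqn:factorial_decay}, which bounds sup-norms over subintervals but combines with Chen's identity to control $p$-variation). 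This is routine and your sketch is adequate.
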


\subsection{The Signature Kernel PDE}

\begin{definition}\label{def:sig_kernel}
Let $I=[u, u']$ and $J=[v, v']$ be two compact intervals and let $x \in C^1(I,V)$ and $y \in C^1(J,V)$. The signature kernel $k_{x,y}:  I \times J \to \mathbb{R}$ is defined as
\begin{equation}
    k_{x,y}(s,t) = \langle S(x)_s, S(y)_t \rangle
\end{equation}
\end{definition}

The following is the main result of this section; it unveils a simple relation between the signature kernel and a class of hyperbolic PDEs.\smallbreak

\begin{theorem}\label{thm:PDE}
    Let $I=[u, u']$ and $J=[v, v']$ be two compact intervals and let $x \in C^1(I,V)$ and $y \in C^1(J,V)$. The signature kernel $k_{x,y}$ is a solution of the following linear, second order, hyperbolic PDE
    \begin{equation}\label{eqn:PDE}
       \frac{\partial^2 k_{x,y}}{\partial s \partial t} = \langle \dot x_s, \dot y_t \rangle_V k_{x,y}, \quad k_{x, y}(u,\cdot) = k_{x,y}(\cdot,v) = 1
    \end{equation}
    where $\dot x_s = \frac{dx_p}{dp}\big|_{p=s}$, $\dot y_t = \frac{dx_q}{dq}\big|_{q=t}$ are the derivatives of $x$ and $y$ at time $s$ and $t$ respectively. \medbreak
\end{theorem}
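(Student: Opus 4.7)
The plan is to compute the mixed partial derivative $\partial^2 k_{x,y}/\partial s \partial t$ directly using the differential characterization of the signature provided by \cref{thm:universal}, then to collapse the resulting inner product on $T((V))$ via the coproduct property stated in equation \eqref{eqn:property}. The boundary conditions follow immediately from the normalization $S(x)_u = S(y)_v = \mathbf{1} = (1,0,0,\ldots)$, since for any $A = (a_0, a_1, \ldots) \in T((V))$ one has $\langle \mathbf{1}, A \rangle = a_0$, and the $0$-th component of any signature equals $1$.

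For the differential step, since $x$ is $C^1$, \cref{thm:universal} yields $\partial_s S(x)_s = S(x)_s \otimes \dot x_s$ componentwise in $T((V))$, and analogously $\partial_t S(y)_t = S(y)_t \otimes \dot y_t$. I would first differentiate in $s$ to obtain
\begin{equation*}
    \frac{\partial k_{x,y}}{\partial s}(s,t) = \langle S(x)_s \otimes \dot x_s, S(y)_t \rangle,
\end{equation*}
and then differentiate in $t$ to obtain
\begin{equation*}
    \frac{\partial^2 k_{x,y}}{\partial s \partial t}(s,t) = \langle S(x)_s \otimes \dot x_s,\, S(y)_t \otimes \dot y_t \rangle.
\end{equation*}
Expanding $\dot x_s = \sum_i \dot x_s^i e_i$ and $\dot y_t = \sum_j \dot y_t^j e_j$ in the canonical basis, bilinearity and the coproduct identity \eqref{eqn:property} give
\begin{equation*}
    \langle S(x)_s \otimes \dot x_s, S(y)_t \otimes \dot y_t \rangle = \sum_{i,j} \dot x_s^i \dot y_t^j \langle S(x)_s, S(y)_t \rangle \langle e_i, e_j\rangle_V = \langle \dot x_s, \dot y_t\rangle_V\, k_{x,y}(s,t),
\end{equation*}
which is the claimed PDE.

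The main technical obstacle is rigorously justifying the differentiation under the infinite-sum inner product, since $T((V))$ is infinite dimensional and $k_{x,y}$ is defined as a convergent series $\sum_{k\geq 0} \langle S(x)_s^{(k)}, S(y)_t^{(k)}\rangle_{V^{\otimes k}}$ over the graded components. The factorial decay estimate \eqref{eqn:factorial_decay} applied to both $x$ and $y$ (which are $C^1$, hence of bounded $1$-variation) shows that this series and its termwise $s$- and $t$-derivatives are dominated uniformly on $I \times J$ by $\sum_k C^k/k!\cdot k^2$ for some constant $C$ depending on $\|\dot x\|_\infty$ and $\|\dot y\|_\infty$. This summable domination permits interchanging derivative and sum, and establishes that $k_{x,y}$ is $C^2$ on $I\times J$, so both sides of \cref{eqn:PDE} are continuous and the identity obtained by computing on each tensor level lifts to the full inner product.
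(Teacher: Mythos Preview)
Your proof is correct and relies on the same ingredients as the paper's (the universal differential equation from \cref{thm:universal}, the coproduct identity \eqref{eqn:property}, and the factorial decay \eqref{eqn:factorial_decay} to justify the interchange). The only real difference is in presentation: you differentiate $k_{x,y}$ directly and invoke termwise differentiation of the series, whereas the paper substitutes the \emph{integral} form $S(x)_s = \mathbf{1} + \int_u^s S(x)_p \otimes dx_p$ for both paths, expands the inner product, and arrives first at the integral equation
\[
k_{x,y}(s,t) = 1 + \int_u^s \int_v^t k_{x,y}(p,q)\,\langle \dot x_p, \dot y_q\rangle_V\, dp\, dq,
\]
which it then differentiates by the fundamental theorem of calculus. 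Your route is marginally more direct for the PDE statement itself; the paper's route has the advantage that the integral equation is exactly what survives when regularity is lowered (it is the object generalized in \cref{sec:geom} and the starting point for the numerical scheme in \cref{sec:Goursat}), so it is worth having as an explicit intermediate result.
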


\begin{proof}
Clearly, for any $t \in J$ one has
\begin{align*}
    k_{x,y}(u,t) & = \left\langle S(x)_{[u,u]},  S(y)_{[v,t]} \right\rangle \\
    &= \left\langle (1,0,...), S(y)_{[v,t]} \right\rangle \\
    & = 1
\end{align*}
and similarly $k_{x,y}(s,v)=1$ for any $s \in I$. Recall that the signature of a path $x:I \to V$ satisfies \cref{eqn:univ}, which is equivalent to the following integral equation
\begin{equation*}
    S(x)_s = \mathbf{1} + \int_{p=u}^sS(x)_p \otimes dx_p
\end{equation*}
where $\mathbf{1} = (1,0,0,...)$. Similarly for $S(y)_t$. Hence, we can compute
\begin{align}
    k_{x,y}(s,t) &= \langle S(x)_s,  S(y)_t \rangle \nonumber \\
    &=\Big\langle \mathbf{1} + \int_{p=u}^{s} S(x)_p \otimes dx_p, \ \mathbf{1} + \int_{q=v}^{t} S(y)_q \otimes dy_q \Big\rangle && \quad \text{(\cref{thm:universal})} \nonumber\\
    &= 1 + \Big\langle \int_{p=u}^{s} S(x)_p \otimes \dot x_p dp, \ \int_{q=v}^{t} S(y)_q \otimes \dot y_q dq \Big\rangle  &&  \quad \text{(differentiability)} \nonumber \\
    & = 1 + \int_{p=u}^{s} \int_{q=v}^{t} \langle S(x)_p \otimes \dot x_p, \ S(y)_q \otimes \dot y_q \rangle dpdq && \quad \text{(linearity)} \label{eqn:linearity}\\
    &= 1 + \int_{p=u}^{s} \int_{q=v}^{t} \langle S(x)_p, S(y)_q \rangle \langle \dot x_p, \dot y_q \rangle_V dpdq && \quad \text{(coproduct property \cref{eqn:property})} \nonumber \\
    & = 1 + \int_{p=u}^{s} \int_{q=v}^{t} k_{x,y}(p,q) \langle \dot x_p, \dot y_q \rangle_V dpdq && \quad \text{(definition of $k_{x,y}$)} \nonumber
\end{align}
Note that the inner product and the double integral can be interchanged in \cref{eqn:linearity} because of the factorial decay (\cref{eqn:factorial_decay}) of the terms in the signature. By the \emph{fundamental theorem of calculus} we can differentiate firstly with respect to $s$
\begin{equation*}
\frac{\partial k_{x,y}(s,t)}{\partial s} = \int_{q=v}^{t} k_{x,y}(s,q) \langle \dot x_s, \dot y_q \rangle_V dq
\end{equation*}
and then with respect to $t$ to obtain the PDE \cref{eqn:PDE}
\begin{equation*}
\frac{\partial^2 k_{x,y}(s,t)}{\partial s \partial t} = \langle \dot x_s, \dot y_t \rangle_V k_{x,y}(s,t) 
\end{equation*}
\end{proof}

\begin{remark}
In \cref{thm:PDE} we have assumed the two input paths $x,y$ to be of class $C^1$. However, one can lower this regularity assumption and consider two continuous paths $x,y$ of bounded variation and obtain the following integral equation
\begin{equation}
    k_{x,y}(s,t) = 1 + \int_{p=u}^s \int_{q=v}^t \langle S(x)_p, S(y)_q \rangle \langle dx_p, dx_q \rangle_V
\end{equation}
where $\langle dx_p, dx_q \rangle_V$ is a quantity we are going to give meaning in \cref{sec:geom}, when we will consider the broader class of \emph{geometric rough paths}. We note that one can make sense of the PDE \cref{eqn:PDE} in \cref{thm:PDE} for piecewise $C^1$ paths.
\end{remark}


Sequential information often arrives in the form of complex data streams taking their values in non-trivial \emph{ambient spaces}. A good learning strategy would be to first \emph{lift} the underlying ambient space to a (possibly infinite dimensional) \emph{feature space} by means of a \emph{feature map} on static data (RBF, Matern etc.), and then consider the signature kernel of the lifted paths as the final learning tool \cite{kiraly2019kernels}. A question that naturally arises is whether one can compute the signature PDE kernel of the lifted paths from the static kernel associated to this feature map. \cite{kiraly2019kernels} propose an algorithm to perform this procedure. Next, we provide an explanation of this procedure in the language of Banach spaces and PDEs.       

\subsection{The signature PDE kernel from static kernels on the ambient space}
A kernel can be identified with a pair of embeddings of a set $\mathcal{X}$ into a Banach space
$E$ and its topological dual $E^{\ast};$ we denote this pair of maps by
$\phi:\mathcal{X}\rightarrow E$ and $\psi:\mathcal{X}\rightarrow E^{\ast}.$ A kernel induces a
function $\kappa:\mathcal{X}\times \mathcal{X}\rightarrow\mathbb{R}$ through the natural pairing
between a Banach space and its dual
\begin{equation}
    \kappa(a,b) := (\phi(a), \psi(b))_E, \quad \text{for all } a,b \in \mathcal{X}
\end{equation}
Commonly, $E$ is assumed to be a Hilbert space, in which case $\psi$ can be taken to be the composition
$e\circ\phi$ where $e:E\rightarrow E^{\ast}$ is the canonical isomorphism coming from the \emph{Riesz representation theorem}, yielding $\kappa (a,b) = \langle \phi(a),\phi(b) \rangle_{E}$. It is unnecessary however for the general picture for $E$ to be a Hilbert space. In the general framework, a given pair of paths $x:I  \to \mathcal{X}$ and $y : J  \to \mathcal{X}$, with $I=[u,u'], J=[v,v']$, can be lifted to paths on $E$ and $E^{\ast}$ respectively as follows
\begin{equation}\label{eqn:seq}
    X_s = \phi(x_s), \quad Y_t=\psi(y_t),  \quad \text{for all } s\in I, t \in J
\end{equation}
If we assume that $X$ and $Y$ are continuous and have bounded variation, then their signatures are well defined and belong to $T(E)$, which is again a Banach space with $T^N(E)^{\ast}\cong T^N(E^{\ast})$ for any $N \geq 1$ \cite{lyons2004differential}. Hence, starting with a kernel $\kappa$ on $\mathcal{X}$, the signature kernel is well-defined
\begin{equation}
    k_{X,Y}(s,t) = (S(\phi \circ x)_s, S(\psi \circ y)_t)_{T(E)} = (S(X)_s, S(Y)_t)_{T(E)}
\end{equation}
If furthermore we assume that the lifted paths $X,Y$ are of class $C^1$ \cref{thm:PDE} applies, yielding the following PDE
\begin{equation}\label{eqn:PDE_lifted}
   \frac{\partial^2 k_{X,Y}}{\partial s \partial t} = ( \dot X_s, \dot Y_t)_E \ k_{X,Y}, \quad k_{X,Y}(u,\cdot) = k_{X,Y}(\cdot,v) = 1
\end{equation}
With a first order finite difference approximations for the derivatives $\dot X_s, \dot Y_t$, the PDE \cref{eqn:PDE_lifted} can be entirely expressed in terms of the static kernel $\kappa$ and underlying paths $x,y$ as follows
\begin{align}
   \frac{\partial^2 k_{X,Y}}{\partial s \partial t} & = ((X_s,Y_t)_E  - (X_{s-1}, Y_t)_E - (X_s,Y_{t-1})_E + (X_{s-1}, Y_{t-1})_E) k_{X,Y} \nonumber \\
   & = (\kappa(x_s,y_t)  - \kappa(x_{s-1}, x_t) - \kappa(x_s,y_{t-1}) + \kappa(x_{s-1}, y_{t-1})) k_{X,Y}
\end{align}\medbreak

\begin{remark}
We note that it is possible to establish our Goursat PDE \cref{eqn:PDE} from the results \cite[Proposition 4.7 p. 16]{kiraly2016kernels}, \cite[Theorem 4., Appendix A]{kiraly2016kernels}, but this requires additional arguments as we shall explain next. Using their notation, given two differentiable paths $\sigma, \tau$, the truncated (at level $M$) signature kernel $k^{\oplus}_{\leq M}(\sigma, \tau)$ satisfies the following equation
\begin{align*}\small
    k^{\oplus}_{\leq M}(\sigma, \tau)_{u,v} = 1 + &\underset{(s_1, t_1) \in (0,u) \times (0,v)}{\int \int}\Big(1 + ... \underset{(s_M,t_M) \in (0,s_{M-1}) \times (0,t_{M-1})}{\int \int} d\kappa_{\sigma,\tau}(s_M,t_M) \Big)d\kappa_{\sigma,\tau}(s_1,t_1)
\end{align*}
where $d\kappa_{\sigma,\tau}(s,t) = k(\dot x_s, \dot y_t) dsdt$ and where $k$ is a kernel on the ambient space of the paths. The first step towards a PDE is to realise that the first integrand in the last equation is itself the truncated signature kernel, truncated at level $M-1$, yielding the expression
\begin{align}
    k^{\oplus}_{\leq M}(\sigma, \tau)_{u,v} = 1 + &\underset{(s_1, t_1) \in (0,u) \times (0,v)}{\int \int} k^{\oplus}_{\leq M-1}(\sigma, \tau)_{s_1,t_1} d\kappa_{\sigma,\tau}(s_1,t_1) \label{eqn:Franz2}
\end{align}
The untruncated signature kernel is obtained by taking the limit in \cref{eqn:Franz2} when $M \to \infty$. The factorial decay in the terms of the signature yields uniform convergence of this limiting process. Two uniformly convergent sequences of functions that are equal for all finite levels $M$ they are also equal in the limit, which implies 
\begin{align}
    k^{\oplus}_{\leq \infty}(\sigma, \tau)_{u,v} = 1 + &\underset{(s_1, t_1) \in (0,u) \times (0,v)}{\int \int} k^{\oplus}_{\leq \infty}(\sigma, \tau)_{s_1,t_1} d\kappa_{\sigma,\tau}(s_1,t_1) \label{eqn:Franz3}
\end{align}
Finally one substitutes $d\kappa_{\sigma,\tau}(s,t) = k(\dot x_s, \dot y_t) dsdt$. At that point (and similarly to our argument in Theorem 2.5) one can differentiate both sides of \cref{eqn:Franz3} to get the Goursat PDE.    
\end{remark}

In the next section we recognize the link between \cref{eqn:PDE} and a class of differential equation known in the literature as Goursat problems and propose a competitve numerical solver for our specific PDE.

\section{A Goursat problem}\label{sec:Goursat}

\cref{eqn:PDE} is an instance of a Goursat problem, which is a class of hyperbolic PDEs introduced in \cite{goursat1916course}. The PDE (\ref{eqn:PDE}) is defined on the bounded domain 
\begin{equation}
    \mathcal{D} := \{(s,t) \mid u\leq s \leq u', v \leq t \leq v'\} \subset I \times J\\
\end{equation}
and its existence and uniqueness (for paths of class $C^1$) are guaranteed by setting the functions $C_1=C_2=C_4=0$ and $C_3(s,t)=\langle \dot x_s, \dot y_t \rangle_V$ in the following result.

\begin{theorem}\cite[Theorems 2 \& 4]{lees1960goursat}\label{thm:goursat}
    Let $\sigma : I \to \mathbb{R}$ and $\tau: J \to \mathbb{R}$ be two absolutely continuous functions whose first derivatives are square integrable and such that $\sigma(u) = \tau(v)$. Let $C_1,C_2,C_3:\mathcal{D} \to \mathbb{R}$ be a bounded and measurable over $\mathcal{D}$ and $C_4:\mathcal{D} \to \mathbb{R}$ be square integrable. Then there exists a unique function $z : \mathcal{D} \to \mathbb{R}$ such that $z(s,v)=\sigma(s), z(u,t)=\tau(t)$ and (almost everywhere on $\mathcal{D}$)
    \begin{equation}\label{eqn:Goursat}
        \frac{\partial^2 z}{\partial s \partial t} = C_1(s,t)\frac{\partial z}{\partial s} + C_2(s,t)\frac{\partial z}{\partial t} + C_3(s,t)z + C_4(s,t)
    \end{equation}
If in addition $C_i \in C^{p-1}(\mathcal{D})$ ($i=1,2,3,4$) and $\sigma$ and $\tau$ are $C^p$, then the unique solution $z:\mathcal{D} \to \mathbb{R}$ of the Goursat problem is of class $C^p$.
\end{theorem}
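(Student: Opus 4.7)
The plan is to convert the Goursat problem into a closed system of integral equations along the two characteristic directions, apply a Banach fixed-point argument for existence and uniqueness, and then bootstrap to obtain the higher-order regularity statement.

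First, I would introduce the auxiliary unknowns $p(s,t) := \partial_s z(s,t)$ and $q(s,t) := \partial_t z(s,t)$. Integrating $\partial_t p = \partial_s\partial_t z$ from $v$ to $t$, $\partial_s q = \partial_s\partial_t z$ from $u$ to $s$, and $\partial_s\partial_t z$ over the rectangle $[u,s]\times[v,t]$ (using the compatibility $\sigma(u) = \tau(v)$), one obtains the closed system
\[
p(s,t) = \sigma'(s) + \int_v^t F(s,\beta)\,d\beta, \qquad q(s,t) = \tau'(t) + \int_u^s F(\alpha,t)\,d\alpha,
\]
\[
z(s,t) = \sigma(s) + \tau(t) - \sigma(u) + \int_u^s\!\int_v^t F(\alpha,\beta)\,d\beta\,d\alpha,
\]
where $F := C_1 p + C_2 q + C_3 z + C_4$. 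The boundary conditions $z(s,v)=\sigma(s)$ and $z(u,t)=\tau(t)$ are built in by direct inspection. Conversely, any sufficiently regular triple solving this system yields an a.e.\ solution of the original PDE with the correct Cauchy data on the two characteristics.

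Second, I would run a Banach fixed-point argument for the map $\Phi$ sending $(z,p,q)$ to the right-hand sides above. A convenient ambient space is $L^\infty(\mathcal D)$ for $z$ combined with suitable mixed Lebesgue spaces for $p,q$ (continuous in one variable, $L^2$ in the other). Equipping the triple with an exponentially weighted Bielecki-type norm
\[
\|(z,p,q)\|_\lambda := \sup_{(s,t)\in\mathcal D} e^{-\lambda[(s-u)+(t-v)]}\bigl(|z|+|p|+|q|\bigr)(s,t),
\]
and choosing $\lambda$ large enough (depending on $\|C_1\|_\infty,\|C_2\|_\infty,\|C_3\|_\infty$ and the size of $\mathcal D$) makes $\Phi$ a strict contraction on the whole rectangle, so the Banach fixed-point theorem produces a unique fixed point. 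One then verifies a.e.\ that $\partial_s z = p$ and $\partial_t z = q$, so $z$ solves the Goursat PDE in the a.e.\ sense; uniqueness of $z$ follows either directly from uniqueness of the fixed point or by applying a two-parameter Gronwall inequality to the difference of two candidate solutions.

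Third, for the regularity statement, I would proceed by induction on $k$: if $\sigma,\tau\in C^k$, $C_i \in C^{k-1}$, and $(z,p,q) \in C^{k-1}(\mathcal D)$, then $F \in C^{k-1}(\mathcal D)$, and differentiation under the integral sign in the expressions for $p,q,z$ together with the fundamental theorem of calculus upgrades $(z,p,q)$ to $C^k(\mathcal D)$; iterating up to $k=p$ yields the claim. The main technical obstacle in the whole argument is the first step in the weak-regularity setting of the theorem, where $C_4,\sigma',\tau'\in L^2$ and $C_1,C_2,C_3$ are only essentially bounded: the unknowns $p,q$ do not naturally sit in $L^\infty$, so the mixed-norm space for the fixed-point must be chosen carefully so that $\Phi$ is a self-map and a contraction, and one must check that the a.e.\ identities $\partial_s z = p$, $\partial_t z = q$ and the PDE itself survive the Picard iteration. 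This is the delicate content worked out in \cite[Theorems 2 \& 4]{lees1960goursat}, which we cite and do not reproduce here.
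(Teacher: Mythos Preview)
The paper does not prove this theorem: it is stated as a citation of \cite[Theorems 2 \& 4]{lees1960goursat} and no argument is given in the paper itself. Your proposal therefore goes beyond what the paper does; the sketch you give (recast as a Volterra-type integral system for $(z,\partial_s z,\partial_t z)$, Bielecki-weighted contraction, then bootstrap for regularity) is the standard route and is correct in outline, and you appropriately defer the delicate low-regularity details to the same reference the paper cites.
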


In the case of the signature PDE kernel, if the two input paths $x,y$ are of class $C^p$, then their derivatives will be of class $C^{p-1}$, and therefore \cref{thm:goursat} implies that the solution $k_{x,y}$ of the PDE \cref{eqn:PDE} will be of class $C^p$.

\subsection{Finite difference approximation}

In this section, we propose a numerical method based on an explicit finite difference scheme to approximate the solution of the Goursat PDE \cref{eqn:PDE}. To simplify the notation, we consider the case where $V=\mathbb{R}^d$.  If $x$ and $y$ are piecewise linear, then the PDE \cref{eqn:PDE} becomes
\begin{equation}\label{eqn:constant_Goursat}
\frac{\partial^2 k_{x,y}}{\partial s \partial t} = C_3 k_{x,y\,},
\end{equation}
on each domain $\mathcal{D}_{ij} = \{(s,t) \mid u_i\leq s \leq u_{i+1}, v_j \leq t \leq v_{j+1}\}$ where $C_3 =\langle \dot x_s, \dot y_t \rangle_V$ is constant. In integral form, the PDE \cref{eqn:constant_Goursat} can be written as
\begin{equation}\label{eqn:integral_Goursat}
k_{x,y}(s,t) = k_{x,y}(s,v) + k_{x,y}(u,t) - k_{x,y}(u,v) + C_3\int_u^s\int_v^t k_{x,y}(r,w)\,dr\,dw,
\end{equation}
for $(s,t),(u,v)\in\mathcal{D}_{ij}$ with $u \leq s$ and $v \leq t$. By approximating the double integral in \cref{eqn:integral_Goursat}, we can derive the following numerical explicit scheme
\begin{align}\label{eqn:explicit_Goursat}
k_{x,y}(s,t) & \approx k_{x,y}(s,v) + k_{x,y}(u,t) - k_{x,y}(u,v)\\[3pt]
    &\hspace{11mm} + \frac{1}{2}C_3\big(k_{x,y}(s,v) + k_{x,y}(u,t)\big)(u-s)(t-v). \nonumber
\end{align}

\begin{remark}
    An implicit scheme can be obtained by estimating \cref{eqn:integral_Goursat} with all four values of $k_{x,y}$ as follows
    \begin{align}\label{eqn:implicit_scheme}
    k_{x,y}(s,t) &\approx k_{x,y}(s,v) + k_{x,y}(u,t) - k_{x,y}(u,v)\\[3pt]
    &\hspace{11mm} + \frac{1}{4}C_3\big(k_{x,y}(u,v) + k_{x,y}(s,v) + k_{x,y}(u,t) + k_{x,y}(s,t)\big)(u-s)(t-v). \nonumber
    \end{align}
\end{remark}\medbreak

As one might expect, more sophisticated approximations can be derived by applying higher order quadrature methods to the double integral in (\ref{eqn:integral_Goursat}) (see \cite{day1966finitediff, wazwaz1993finitediff} for specific examples).\medbreak

Let $\mathcal{D}_I = \{u=u_0<u_1<...<u_{m-1}<u_m=u'\}$ be a partition of the interval $I$ and $\mathcal{D}_J = \{v=v_0<v_1<...<v_{n-1}<v_n=v'\}$ be a partition of the interval $J$. Using the above, we can define \textit{finite difference schemes} on the grid $P_0:=\mathcal{D}_I \times \mathcal{D}_J$ (and its dyadic refinements). \medbreak

\begin{figure}[h]
\centering
\centerline{\includegraphics[scale=0.45]{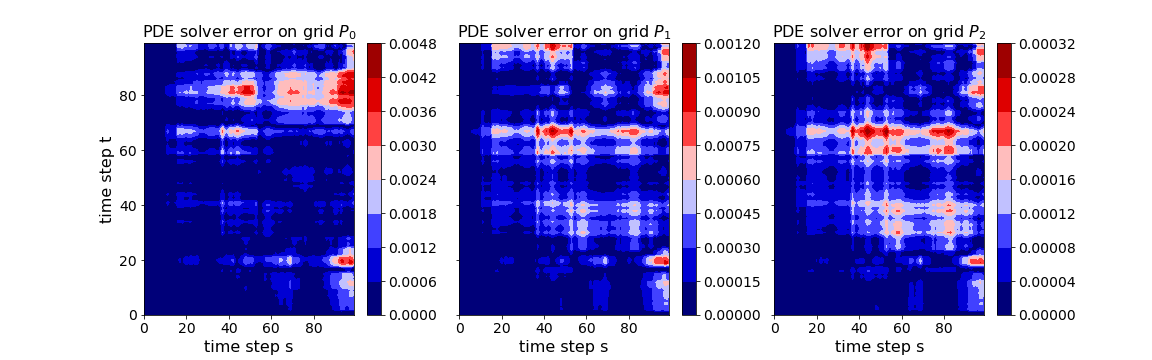}}
\caption{{\small Example of error distribution of $k_{x,y}(s,t)$ on the grids $P_0, P_1, P_2$. The discretization is roughly four times more accurate on $P_1$ than on $P_0$ (as expected by \cref{thm:error}).}}
\label{fig:error}
\end{figure}

\begin{definition}\label{def:fintite_diff}
For $\lambda\in\{0,1,2,\cdots\}$, we define the grid $P_\lambda$ as the dyadic refinement of $P_0$
such that $P_\lambda \cap ([u_i, u_{i+1}]\times [v_i, v_{i+1}]) = \{u_i + k\,2^{-\lambda}(u_{i+1} - u_i), v_j + l\,2^{-\lambda}(v_{j+1} - v_j)\}_{0\,\leq\, k, l\, \leq\, 2^\lambda}$.\smallbreak\noindent
\end{definition} \medbreak

On the grid $P_\lambda = \{(s_i, t_j)\}_{0\,\leq\, i\, \leq\, 2^\lambda n,\, 0\,\leq\,j \, \leq\, 2^\lambda m}\,$, we define the following explicit finite difference scheme for the PDE \cref{eqn:PDE}
\begin{align}\label{eqn:explicit_finite_diff}
    \hat{k}(s_{i+1}, t_{j+1}) & = \hat{k}(s_{i+1}, t_j) + \hat{k}(s_i, t_{j+1}) - \hat{k}(s_i, t_j)\\
    &\hspace{10mm} + \frac{1}{2}\langle x_{s_{i+1}} - x_{s_i}, y_{t_{j+1}} - y_{t_j} \rangle\big(\hat{k}(s_{i+1}, t_j) + \hat{k}(s_i, t_{j+1})\big),\nonumber\\[3pt]
    \hat{k}(s_0, \cdot\,) & = \hat{k}(\,\cdot, t_0) = 1, \nonumber
\end{align} \medbreak

\begin{remark}
If $x$ and $y$ are piecewise linear paths with respect to the coarsest grid $P_{0\,}$
then $\langle x_{s_{i+1}} - x_{s_i}, y_{t_{j+1}} - y_{t_j} \rangle = \frac{1}{2^{2\lambda}}\langle x_{u_{p+1}} - x_{u_p}, y_{v_{q+1}} - y_{v_q}\rangle$ for some $0\leq p < n$ and $0\leq q < m$.
\end{remark}\medbreak


The explicit finite differences scheme \cref{eqn:explicit_finite_diff} has a time complexity of $O\big(d^2\,2^{2\lambda}\,mn\big)$ on the grid $P_\lambda$, where $d$ is the dimension of the input streams $x,y$ and $m,n$ denote their respective lengths. \cref{thm:error} (which is the proved in \cref{append:error}) ensures that by refining the discretization of the grid used to approximate the PDE, we get convergence to the true value. In practice we found that provided the input paths are rescaled so that their maximum value across all times and all dimensions is not too large ($\approx 1$), coarse partitioning choices such as $P_0$ or $P_1$ are sufficient to obtain a highly accurate approximation, as shown in \cref{fig:error}. \medbreak

\begin{theorem}[Global error estimate, \cref{append:error}]\label{thm:error}
Let $\widetilde{k}$ be a numerical solution obtained by applying one of the proposed finite difference schemes (\cref{eqn:explicit_finite_diff}) to the Goursat problem \cref{eqn:PDE} on $P_\lambda$ where $x$ and $y$ are piecewise linear with respect to the grids $\mathcal{D}_I$ and $\mathcal{D}_J$. In particular, we are assuming there exists a constant $M$, that is independent of $\lambda$, such that
\begin{equation}
\sup_{\mathcal{D}}|\langle \dot x_s, \dot y_t \rangle| < M.
\end{equation}
Then there exists a constant $K > 0$ depending on $M$ and $k_{x,y}$, but independent of $\lambda$, such that
\begin{equation}
\sup_{\mathcal{D}}\big|k_{x,y}(s, t) - \widetilde{k}(s, t)\big| \leq \frac{K}{2^{2\lambda}},\ \ \ \text{for all $\lambda \geq 0$}
\end{equation} 
\end{theorem}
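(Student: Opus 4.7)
The plan is a standard two-step finite-difference error analysis: quantify the per-cell local truncation error, then propagate it globally via a two-dimensional discrete Gr\"onwall inequality. Throughout, write $h = 2^{-\lambda}$, absorbing the cell sizes of $P_0$ into the eventual constant $K$.

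First I would fix a single cell $[s_i,s_{i+1}]\times[t_j,t_{j+1}]$ of $P_\lambda$. Because $x,y$ are piecewise linear with respect to $P_0$, the coefficient $C_3:=\langle \dot x_s,\dot y_t\rangle_V$ is constant on every cell of $P_\lambda$, and \cref{thm:goursat} (applied locally) ensures that $k_{x,y}$ is smooth there with derivatives bounded by quantities depending only on $M$ and the area of $\mathcal{D}$. The integrated form of the PDE reads
\begin{equation*}
k_{x,y}(s_{i+1},t_{j+1}) = k_{x,y}(s_{i+1},t_j)+k_{x,y}(s_i,t_{j+1})-k_{x,y}(s_i,t_j) + C_3\!\int_{s_i}^{s_{i+1}}\!\!\int_{t_j}^{t_{j+1}}\! k_{x,y}(r,w)\,dr\,dw,
\end{equation*}
while the scheme \cref{eqn:explicit_finite_diff} replaces the double integral by $\tfrac{h^2}{2}\bigl(k_{x,y}(s_{i+1},t_j)+k_{x,y}(s_i,t_{j+1})\bigr)$. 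Taylor-expanding $k_{x,y}$ around $(s_i,t_j)$ and comparing the two expressions termwise, the two agree up to $O(h^3)$ in the integrand and hence $O(h^4)$ after integration, so the local truncation error $\tau_{i,j}$ is $O(h^4)$ with an implicit constant controlled by $M$ and by the uniform bounds on the second partials of $k_{x,y}$.

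Next, subtracting the scheme from the exact integral equation produces a recursion for the pointwise error $e_{i,j}:=k_{x,y}(s_i,t_j)-\widetilde k(s_i,t_j)$,
\begin{equation*}
e_{i+1,j+1}-e_{i+1,j}-e_{i,j+1}+e_{i,j} \;=\; \tfrac{h^2 C_3}{2}\bigl(e_{i+1,j}+e_{i,j+1}\bigr) + \tau_{i,j},\qquad e_{0,\cdot}=e_{\cdot,0}=0.
\end{equation*}
Telescoping the mixed second difference over $(p,q)\leq(i,j)$ gives
\begin{equation*}
e_{i+1,j+1}\;=\;\sum_{p\leq i}\sum_{q\leq j}\!\left[\tfrac{h^2 C_3}{2}\bigl(e_{p+1,q}+e_{p,q+1}\bigr)+\tau_{p,q}\right].
\end{equation*}
The pure truncation contribution is bounded by (number of $P_\lambda$-cells)$\,\times\,O(h^4)=O(2^{2\lambda}\cdot 2^{-4\lambda})=O(2^{-2\lambda})$, already matching the advertised rate.

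Finally, to absorb the error sum I would invoke a two-dimensional discrete Gr\"onwall inequality: any nonnegative array $a_{i,j}$ satisfying $a_{i,j}\leq b + \tfrac{Mh^2}{2}\sum_{p\leq i,\,q\leq j}(a_{p+1,q}+a_{p,q+1})$ obeys $a_{i,j}\leq b\exp(M|I||J|)$ uniformly in $\lambda$. Applied to $|e_{i,j}|$ with $b=O(2^{-2\lambda})$, this yields $\sup_{\mathcal{D}}|e_{i,j}|\leq K/2^{2\lambda}$ for a constant $K$ depending only on $M$, $|I||J|$ and uniform derivative bounds of $k_{x,y}$, as required. The principal obstacle is this 2D Gr\"onwall step, which is not the verbatim 1D lemma; I would prove it by induction on $i+j$, controlling row sums $\sum_q|e_{p,q}|$ via a 1D Gr\"onwall argument and then combining rows, so that the resulting exponential factor remains finite precisely because $M$ bounds $\langle\dot x,\dot y\rangle_V$ uniformly on $\mathcal{D}$.
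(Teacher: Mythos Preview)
Your proposal is correct and follows the same high-level two-stage architecture as the paper's proof: establish an $O(h^4)$ local truncation error on each cell, then propagate via a recurrence for the pointwise error. The executions differ, however. For the local step, the paper invokes an explicit Riemann-function representation of the solution to $\partial_{st}k=C_3k$ in terms of modified Bessel functions (their Theorem~A.1), and then derives the local error in Theorem~A.2 by comparing the scheme with this closed form; this yields very explicit constants tracked through Bessel identities. Your Taylor-expansion argument is more elementary and avoids the Bessel machinery entirely, at the cost of relying on uniform bounds for the second partials of $k_{x,y}$ that you take from the Goursat well-posedness theorem rather than compute directly. For the propagation step, the paper writes down essentially the same three-term recurrence you obtain (bounding $|e_{i+1,j+1}|$ by a combination of $|e_{i,j}|,|e_{i+1,j}|,|e_{i,j+1}|$ plus the local $O(h^4)$ term) and then simply says the result follows by iterating it; your telescoping plus two-dimensional discrete Gr\"onwall argument makes this closing step explicit, which is arguably an improvement in rigour over the paper's one-line conclusion.
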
\medbreak

\subsubsection{GPU implementation of the Goursat PDE}

As mentioned earlier, the time complexity for one signature PDE kernel evaluation is $\mathcal{O}(d\ell^2)$ on $P_0$, where $d$ is the number of channels of the input time series and $\ell$ is their (maximum) length. Therefore, the complexity is quadratic in the length of the time series, which makes kernel evaluations computationally expensive for long time series. This also holds for the algorithm proposed in \cite{kiraly2019kernels}. However, it is possible to parallelize the PDE solver by observing that instead of solving the PDE in row or column order, we can update the antidiagonals of the solution grid: each cell on an antidiagonal can be updated in parallel as there is no data dependency between them. This breaks the quadratic complexity, that becomes linear in the length $\ell$, provided the number of threads in the \texttt{GPU} exceeds the size of the discretization. as shown in \cref{fig:comparison}. This parallelization is possible thanks to the ``PDE structure'' of the problem, representing a considerable computational gain of our algorithm compared to the one proposed by \cite{kiraly2019kernels}. We also note that the linear dependency on the number of channels $d$ of the input time series allows for the evaluation of the signature PDE kernel on time series with thousands of channels. Our library \texttt{sigkernel} offers the ability to evaluate kernels on a \texttt{CPU} using an optimized \texttt{cython} implementation as well as on \texttt{CUDA} if  \texttt{GPU}s are available to the user. \medbreak

\begin{figure}[ht]
\centering
\centerline{\includegraphics[scale=0.45]{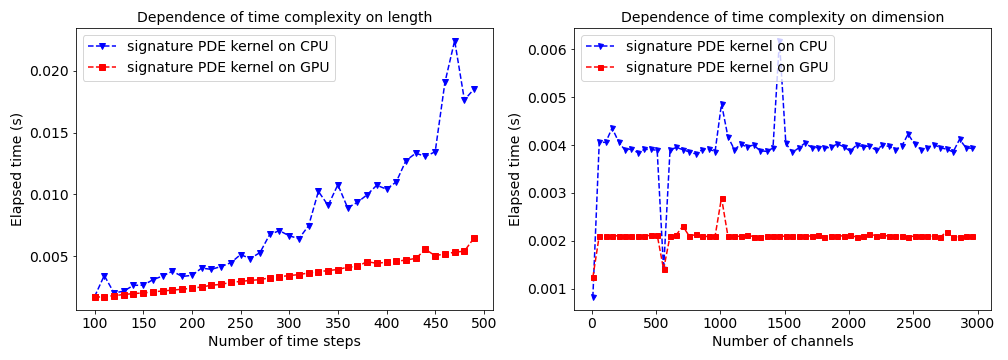}}
\caption{{\small Comparison of the elapsed time (s) to reach an accuracy of $10^{-3}$ from a target value obtained by solving the signature kernel PDE on a fine discretization grid ($P_5$). We simulate $N=5$ (piecewise linear interpolation of) Brownian paths at each run. On the \textbf{left} is the dependency on the length of two paths of dimension $d=2$. Note the complexity reduction from quadratic on \texttt{CPU} to linear on \texttt{GPU} (P100). On the \textbf{right} is the dependency on the dimension of two paths of length $\ell=10$.}}
\label{fig:comparison}
\end{figure}

In \cref{sec:applications} we will present various applications of the signature kernel to time series classification and regression problems. But first we continue our theoretical analysis and drop the smoothness assumption on the input paths $x,y$ and extending the definition of signature kernel to far less regular classes of paths, namely geometric rough paths. The need to investigate the rough version of the signature kernel can be motivated also from several practical viewpoints. For example, this kernel can be used to derive an (unbiased) estimator for the \emph{maximum mean discrepancy} (MMD) distance between distributions on path-space \cite[sections 7, 8]{chevyrev2018signature}. The MMD distance itself is useful to train models such as \emph{neural SDEs} \cite{tzen2019neural, kidger2021neural, li2020scalable, cuchiero2020generative}, that is  to fit neural SDEs to time series data. Since SDE solutions are geometric p-rough paths, \cref{thm:main} provides a candidate for the limiting kernel as the mesh size of the SDE discretization tends to zero. In particular, it guarantees that the signature kernel doesn't ``blow-up'' in the limit. Another area where the rough signature kernel could be relevant is quantitative finance, where \emph{rough volatility models} \cite{gatheral2018volatility, bayer2016pricing} try to calibrate differential equation driven by \emph{fractional Brownian motion}, which is a rough path if the \emph{Hurst exponent} $h \leq 2$.

\section{The signature kernel for geometric rough paths}\label{sec:geom}

Here we extend the notion of signature kernel developed in \cref{sec:BV} to the broader class of geometric rough paths. To follow the material presented in this section we assume that the reader has some level of familiarity with basic concepts of rough path theory. We provide a brief summary of this theory in \cref{appendix:RPT}. We begin by clarifying what we mean by signature of a geometric $p$-rough path.

\subsection{The signature of a geometric rough path}

\begin{definition}
The signature $S(X)$ of a geometric $p$-rough path $X \in G\Omega_p(V)$ (\cref{def:geom_rough_path}) controlled by a control (\cref{def:control}) $\omega$ is its unique extension to a multiplicative functional (\cref{def:multiplicative_functional}) on $T(V)$ as given by the Extension Theorem \ref{thm:extension_theorem}.

\end{definition}
From now on, we will denote by $G\Omega_p(V)$ the space of geometric $p$-rough paths over $V$. Because all the sums in $T(V)$ are finite, $(T(V), \langle \cdot, \cdot \rangle)$ is an inner product space. Hence, denoting by $\overline{T(V)}$ the completion of $T(V)$, $(\overline{T(V)}, \langle \cdot, \cdot \rangle)$ is a Hilbert space. Let $\norm{\cdot}$ be the norm on $\overline{T(V)}$ induced by the inner product $\langle \cdot, \cdot \rangle$, i.e. defined for any $A=(a_0,a_1,...) \in \overline{T(V)}$ as $\norm{A} = \sqrt{\sum_{k\geq 0}\norm{a_k}_{V^{\otimes k}}^2}$, where $\norm{\cdot}_{V^{\otimes k}}$ is the norm on $V^{\otimes k}$ induced by $\langle \cdot, \cdot \rangle_{V^{\otimes k}}$, for any $k \geq 0$. In summary, we have the following chain of inclusions
\begin{equation}
    T(V) \hookrightarrow \overline{T(V)} \hookrightarrow T((V))
\end{equation}
Note that $\overline{T(V)} = \{x \in T((V)) : || x || < \infty\}$. \medbreak

\begin{lemma}
Let $X$ be a geometric $p$-rough path $X \in G\Omega_p(V)$ defined over the simplex $\Delta_T$. Then, for any $(s,t) \in \Delta_T$ one has $S(X)_{s, t} \in \overline{T(V)}$. \medbreak
\end{lemma}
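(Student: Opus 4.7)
The plan is to deduce membership in $\overline{T(V)}$ directly from the factorial decay of the signature levels guaranteed by the Extension Theorem. By definition,
\[
\|S(X)_{s,t}\|^2 \;=\; \sum_{k \geq 0} \|S(X)^k_{s,t}\|_{V^{\otimes k}}^2,
\]
so the task reduces to showing this series converges.

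First, I would invoke the Extension Theorem (referenced in the excerpt and recalled in Appendix \ref{appendix:RPT}) to obtain the standard factorial estimate: if $X$ is controlled by the control $\omega$, then there exists a constant $\beta = \beta(p) > 0$ such that for every $k \geq 0$ and every $(s,t) \in \Delta_T$,
\[
\|S(X)^k_{s,t}\|_{V^{\otimes k}} \;\leq\; \frac{\omega(s,t)^{k/p}}{\beta\,\Gamma(k/p+1)}.
\]
This is the rough-path analogue of the factorial decay \cref{eqn:factorial_decay} recalled in the introduction, but derived from multiplicativity and the control $\omega$ rather than from smoothness.

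Second, I would plug this estimate into the norm sum to get
\[
\|S(X)_{s,t}\|^2 \;\leq\; \sum_{k \geq 0} \frac{\omega(s,t)^{2k/p}}{\beta^2\,\Gamma(k/p+1)^2}.
\]
The right-hand side is a series whose $k$-th term decays super-exponentially in $k$: indeed, by Stirling, $\Gamma(k/p+1)^{2}$ grows faster than any exponential in $k$, while $\omega(s,t)^{2k/p}$ grows at most exponentially. A routine application of the ratio test (or comparison with a Mittag--Leffler type series $E_{2/p}(\omega(s,t)^{2/p})$) shows that the series converges for every fixed $(s,t) \in \Delta_T$. In particular $\|S(X)_{s,t}\| < \infty$, which is precisely the characterization $S(X)_{s,t} \in \overline{T(V)}$ noted right before the statement.

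The only mildly delicate point is making sure the factorial estimate from the Extension Theorem is applied with the correct constants; once that is in place the convergence of the series is automatic, so there is no substantive obstacle beyond bookkeeping. No smoothness or boundedness assumption on $V$ is needed since the Hilbert norm on $\overline{T(V)}$ is built levelwise from the inner products $\langle \cdot, \cdot \rangle_{V^{\otimes k}}$, and the estimate is uniform in the level.
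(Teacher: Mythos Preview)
Your proposal is correct and follows essentially the same approach as the paper: both proofs invoke the factorial bound $\|X^k_{s,t}\|_{V^{\otimes k}} \leq \omega(s,t)^{k/p}/(\beta_p (k/p)!)$ from the Extension Theorem and use it to show that $\sum_{k\geq 0}\|X^k_{s,t}\|_{V^{\otimes k}}^2 < \infty$, hence $S(X)_{s,t}\in\overline{T(V)}$. The only cosmetic difference is that the paper additionally writes down the truncated approximants $X^{(n)}_{s,t}=(1,X^1_{s,t},\dots,X^n_{s,t},0,\dots)$ and checks $\|X^{(n)}_{s,t}-S(X)_{s,t}\|\to 0$, whereas you go straight through the characterization $\overline{T(V)}=\{x\in T((V)):\|x\|<\infty\}$ stated just before the lemma; the content is the same.
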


\begin{proof}
To prove the statement of the lemma it suffices to find a sequence of tensors $\{X^{(n)}_{s,t} \in T^{ k}(V)\}_{n \in \mathbb{N}}$ that convergences to $S(X_{s,t})$ in the $\| \cdot \|$-topology.
Setting $X^{(n)}_{s,t} = (1, X^1_{s,t}, ..., X^n_{s,t}, 0, ...)$, and using the bounds from the Extension Theorem \ref{thm:extension_theorem} we have
\begin{equation}
    \|S(X)_{s, t}\| = \sqrt{\sum_{k=0}^\infty \| X^{k}_{s, t}\|^2_{V^{\otimes k}}} \leq \sqrt{\sum_{k=0}^\infty \frac{\omega(s, t)^{2k/p}}{(\beta_p(k/p)!)^2}} \leq \sum_{k=0}^\infty \frac{\omega(s, t)^{k/p}}{\beta_p(k/p)!}
\end{equation}
which is clearly a convergent series because of the terms decay factorially, and $\forall (s, t) \in \Delta_I$
\begin{equation}
    \|X^{(n)}_{s, t} - S(X)_{s, t}\| = \sqrt{\sum_{k\geq n+1}^\infty \|X^k_{s,t}\|^2_{V^{\otimes k}}} \longrightarrow 0 \text{ as } n \to \infty
\end{equation}
\end{proof}

Next we present our second main result, that is we extend the definition of signature kernel to the space of geometric $p$-rough paths (\cref{def:geom_rough_path}) and show that this rough version of the signature kernel solves an iterated double integral equation of two one-forms analogous to the Goursat PDE \cref{eqn:PDE} presented in \cref{sec:BV}. 

\subsection{The signature kernel for geometric rough paths}

In what follows $\Delta_I, \Delta_J$ will denote the following two simplices
\begin{align}
    \Delta_I &= \{(s,t) \in [i_-,i_+]^2 : i_-\leq s \leq t \leq i_+\}\\
    \Delta_J &= \{(s,t) \in [j_-,j_+]^2 : j_- \leq s \leq t \leq j_+\}
\end{align}
where $i_-, i_+, j_-, j_+ \geq 0$ are positive scalars such that $i_-<i_+$ and $j_-<j_+$.

\begin{definition}\label{def:rough_kernel}
Let $p,q \geq 1$ be two scalars. Let $X \in G\Omega_p(V)$ and $Y \in G\Omega_q(V)$ be two geometric $p$- and $q$-rough paths respectively and controlled by two controls $\omega_X$ and $\omega_Y$ respectively. The (rough) signature kernel $K_{(s_1,s_2), (t_1,t_2)} : G\Omega_p(V) \times G\Omega_q(V) \to \mathbb{R}$ is defined for any $(s_1,s_2) \in \Delta_I$ and $(t_1,t_2) \in \Delta_J$ as follows
    \begin{equation}
      K_{(s_1,s_2), (t_1,t_2)}(X, Y) =  \big\langle S(X)_{s_1,s_2}, S(Y)_{t_1,t_2} \big\rangle
    \end{equation}
where the inner product is taken in $\overline{T(V)}$.
\end{definition}

\begin{remark}
    On the one hand, the signature kernel of \cref{def:sig_kernel} is configured to act on two time indices and is indexed on two paths. This choice was made in order to differentiate with respect to these and obtain the PDE \cref{eqn:PDE}. On the other hand, the rough signature kernel of \cref{def:rough_kernel} acts on two (rough) paths and is indexed on time indices. When dealing with highly oscillatory objects like rough paths studied in this section, one can't expect to obtain a PDE, as these paths are far from being differentiable (even locally). However, we will nonetheless be able to use a density argument to prove our main result (\cref{thm:main}).
\end{remark}

Next we show that the rough signature kernel is bounded and continuous.
	
\begin{lemma}\label{lemma:kernel}
    For any $(X, Y) \in G\Omega_p(V) \times G\Omega_q(V)$ and any $(s_1, s_2) \in \Delta_I, (t_1, t_2) \in \Delta_J$
    \begin{equation}
        \big\langle S(X)_{s_1, s_2}, S(Y)_{t_1, t_2} \big\rangle < +\infty    
    \end{equation}
    Furthermore, the rough signature kernel $K_{(s_1,s_2), (t_1,t_2)}$ is continuous with respect to the the product $p,q$-variation topology.
\end{lemma}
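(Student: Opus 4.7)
The plan is to establish both claims by exploiting the factorial decay already used in the preceding lemma, together with Cauchy--Schwarz in $\overline{T(V)}$ and Lyons' continuity theorem for the signature (i.e. the Extension Theorem as a continuous map in $p$-variation).

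For boundedness, I would first apply Cauchy--Schwarz in the Hilbert space $\overline{T(V)}$ to get
\begin{equation*}
\bigl|\langle S(X)_{s_1,s_2}, S(Y)_{t_1,t_2}\rangle\bigr| \leq \|S(X)_{s_1,s_2}\| \cdot \|S(Y)_{t_1,t_2}\|.
\end{equation*}
Each of these norms is finite by the previous lemma, which gave the explicit bound
\begin{equation*}
\|S(X)_{s_1,s_2}\| \leq \sum_{k=0}^\infty \frac{\omega_X(s_1,s_2)^{k/p}}{\beta_p (k/p)!},
\end{equation*}
and analogously for $Y$ with $\omega_Y$ and $q$. Both series converge by factorial decay, so the kernel is finite.

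For continuity, I would take sequences $X_n \to X$ in $p$-variation and $Y_n \to Y$ in $q$-variation, fix $(s_1,s_2) \in \Delta_I$, $(t_1,t_2) \in \Delta_J$, and write
\begin{align*}
&\bigl|K_{(s_1,s_2),(t_1,t_2)}(X_n,Y_n) - K_{(s_1,s_2),(t_1,t_2)}(X,Y)\bigr| \\
&\quad \leq \bigl|\langle S(X_n)_{s_1,s_2} - S(X)_{s_1,s_2},\, S(Y_n)_{t_1,t_2}\rangle\bigr| \\
&\quad\quad + \bigl|\langle S(X)_{s_1,s_2},\, S(Y_n)_{t_1,t_2} - S(Y)_{t_1,t_2}\rangle\bigr|.
\end{align*}
By Cauchy--Schwarz the right-hand side is bounded by
\begin{equation*}
\|S(X_n)_{s_1,s_2} - S(X)_{s_1,s_2}\|\cdot \|S(Y_n)_{t_1,t_2}\| + \|S(X)_{s_1,s_2}\|\cdot \|S(Y_n)_{t_1,t_2} - S(Y)_{t_1,t_2}\|.
\end{equation*}
The factors $\|S(Y_n)_{t_1,t_2}\|$ are uniformly bounded in $n$: convergence in $q$-variation implies the controls $\omega_{Y_n}$ are eventually uniformly bounded on the compact simplex, so the explicit majorant from the boundedness step applies uniformly.

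The key step, and the main obstacle, is then to argue that $\|S(X_n)_{s_1,s_2} - S(X)_{s_1,s_2}\| \to 0$ (and similarly for $Y$). Lyons' continuity theorem gives convergence of each level $X^k_n \to X^k$ in $p/k$-variation on the compact simplex, hence pointwise convergence $\|X^k_n - X^k\|_{V^{\otimes k}} \to 0$. To upgrade the level-wise convergence to convergence of the full signature in the $\|\cdot\|$-norm, I would use a dominated convergence style argument on the series
\begin{equation*}
\|S(X_n)_{s_1,s_2} - S(X)_{s_1,s_2}\|^2 = \sum_{k=0}^\infty \|X^k_{n,\,s_1,s_2} - X^k_{s_1,s_2}\|_{V^{\otimes k}}^2,
\end{equation*}
dominating the tail via the uniform factorial-decay estimate $\|X^k_{n,\,s_1,s_2}\|_{V^{\otimes k}} \leq \omega_{X_n}(s_1,s_2)^{k/p}/\beta_p(k/p)!$ together with a uniform bound on $\omega_{X_n}$, and then truncating at a large $N$ and applying level-wise convergence for $k \leq N$. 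This gives continuity of each factor, and combined with the uniform bound on $\|S(Y_n)_{t_1,t_2}\|$, continuity of the kernel follows.
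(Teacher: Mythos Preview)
Your proof is correct and uses the same core ingredients as the paper---Cauchy--Schwarz in $\overline{T(V)}$ and the factorial decay estimate from the Extension Theorem---but the two presentations are organised differently. For continuity, the paper factors the kernel as $K = g \circ f$, where $f(X,Y) = (S(X)_{s_1,s_2}, S(Y)_{t_1,t_2})$ and $g$ is the inner product on $\overline{T(V)}\times\overline{T(V)}$; it then asserts that $f$ is continuous by the Extension Theorem and $g$ is continuous as a Hilbert space inner product, so the composition is continuous. Your argument is a direct sequential proof: you split via add--subtract, apply Cauchy--Schwarz, and then establish $\|S(X_n)-S(X)\|\to 0$ by a tail-truncation/dominated-convergence argument. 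In fact, your explicit tail estimate is precisely what is needed to justify the paper's claim that the extension map is continuous \emph{into} $\overline{T(V)}$ with the Hilbert norm (as opposed to merely level-wise in $p$-variation, which is what the Extension Theorem literally states). So your version is slightly more detailed on a point the paper leaves implicit; otherwise the two arguments coincide.
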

   
\begin{proof}
For any $(s_1,s_2) \in \Delta_I, (t_1,t_2) \in \Delta_J$ and by definition of the inner product $\langle \cdot, \cdot \rangle$ on $\overline{T(V)}$ we immediately have 
\begin{align*}
    \langle S(X_{s_1,s_2}), S(Y_{t_1, t_2}) \rangle &= \sum_{k=0}^\infty \langle X_{s_1, s_2}^k, Y_{t_1, t_2}^k \rangle_{V^{\otimes k}}\\
    &\leq \sum_{k=0}^\infty \|X_{s_1, s_2}^k\|_{V^{\otimes k}} \|Y_{t_1, t_2}^k\|_{V^{\otimes k}} \hspace{2cm} \text{ (Cauchy-Schwarz)} \\
    & \leq \sum_{k=0}^\infty \frac{\omega_X(s_1,s_2)^{k/p} \cdot \omega_Y(t_1,t_2)^{k/q}}{\beta_p(k/p)! \cdot \beta_q(k/q)!} \hspace{1.6cm} (\text{Ext. Theorem}) \\
    & < + \infty 
\end{align*}
Consider now the function $f_{(s_1,s_2), (t_1,t_2)}: G\Omega_p(V) \times G\Omega_q(V) \to \overline{T(V)} \times \overline{T(V)}$ defined as follows
\begin{equation}
     f_{(s_1,s_2), (t_1,t_2)}(X, Y)  = \big(S(X)_{s_1,s_2}, S(Y)_{t_1,t_2}\big)
\end{equation}

and the function $g: \overline{T(V)} \times \overline{T(V)} \to \mathbb{R}$ defined as follows
\begin{equation}
    g(T_1, T_2) = \langle T_1, T_2 \rangle
\end{equation}
The map $g$ is clearly continuous in both variables in the sense of $\| \cdot \|$. By the Extension Theorem \ref{thm:extension_theorem} we know that the two maps that extend (uniquely) $X$ and $Y$ respectively to multiplicative functionals on the full tensor algebra $\overline{T(V)}$ are continuous in the $p$- and $q$-variation topologies respectively. Therefore $f_{(s_1,s_2), (t_1,t_2)}$ is also continuous in both of its variables. Hence, $K_{(s_1,s_2), (t_1,t_2)} = g \circ f_{(s_1,s_2), (t_1,t_2)}$ is also continuous in both variables as it is the composition of two continuous functions.
\end{proof}

In the next section we present our second main result. The core technical tool we use in the proof is the notion of \emph{integral of a one-form along a rough path}, discussed in \cref{sec:integration_one_form}.

\subsection{A rough integral equation}
To prove our main result we ought to give a meaning to the following double integral
\begin{equation}\label{def:double_integral_I}
``\mathcal{I}(X, Y) = \int\int K(X, Y) \langle dX, dY\rangle''
\end{equation}
We do so by constructing a double rough integral constructed as the composition of two \emph{one-forms} (\cref{def:Lip_form}) as we shall explain next. In what follows we let $W:=V \oplus \overline{T(V)}$. 

\begin{remark}
    In the following construction, the spaces $V,W$ are swapped compared to the notation used in \cref{sec:integration_one_form}.
\end{remark}

For a fixed tensor $A \in \overline{T(V)}$, consider the linear one-form $\alpha_A: W \to L(W, V)$ defined as follows\footnote{Perhaps more explicitly: for any $(b,B),(b',B') \in W$, $\alpha_A(b,B)(b',B') = \langle A, B \rangle b'$.}: for any $(b,B) \in W$
\begin{equation}\label{eqn:alpha}
\alpha_A(b, B)  = \begin{pmatrix}
                \langle A,B \rangle I_V & 0 \\
                0 & 0 
             \end{pmatrix}
\end{equation}
where $I_V : V \to V$ is the identity on $V$ and where the inner product is taken in $\overline{T(V)}$.
\begin{remark}
    Note that a linear one-form is $Lip(\gamma)$ for all $\gamma \geq 0$ (\cref{def:Lip_form}). Hence, by \cref{def:rough_integral} of \emph{integral of a one-form along a rough path}, we can integrate $\alpha_A$ along any geometric $p$-rough path with $p \geq 1$.
\end{remark}

For any $p \geq 1$ and for any fixed geometric $p$-rough path $Z \in G\Omega_p(V)$, consider now a second linear one-form $\beta_Z : W \to L(W, \mathbb{R})$ defined as follows: for any $(a,A) \in W$ and for any $s,t \in \Delta_I$
\begin{equation}\label{eqn:beta}
    \beta_{Z_{s,t}}(a,A) = \begin{pmatrix}
                \left\langle \left(\int_s^t \alpha_A(Z_u)dZ_u\right)^1, I_V \right\rangle & 0 \\
                0 & 0 
             \end{pmatrix}
\end{equation}
where the inner product is taken in $V$.

\begin{remark}
    Note that the rough integral $\int \alpha_A(Z)dZ$ is a $p$-rough path with values in $T^{\lfloor p \rfloor}(V)$ (and that, by the Extension Theorem \ref{thm:extension_theorem}, its values in $T(V), T((V))$ are also uniquely determined). Here, with the notation $\left(\int \alpha_A(Z_u)dZ_u\right)^1$ we mean the canonical projection of the rough path $\int \alpha_A(Z)dZ$ onto $V$.
\end{remark}

\begin{remark}
    We note that for any $(b,B) \in W$, the data in $b \in V$ is ignored by both one-forms $\alpha_A$ and $\beta_Z$ when acting on $(b,B)$. We preferred to keep this notation as we find it more in line with the standard notation used in rough integration.
\end{remark}

As the one-form $\beta_Z$ is $Lip(\gamma)$ for all $\gamma \geq 0$, we can integrate $\beta_Z$ along any $q$-rough path $\widetilde Z$ with $q \geq 1$ and use this integral as definition for the double integral $\mathcal{I}$ of \cref{def:double_integral_I}.

\begin{definition}\label{def:I} Let $\Delta_I, \Delta_J$ the two simplices and $p,q \geq 1$ be two scalars. Let $X \in G\Omega_p(V)$ and $Y \in G\Omega_q(V)$ be two geometric $p$- and $q$-rough paths respectively. For any $(s_1,s_2) \in \Delta_I$ and any $(t_1,t_2) \in \Delta_J$, define the double rough integral $\mathcal{I}_{(s_1,s_2), (t_1,t_2)}(X,Y)$ as
\begin{equation}\label{eqn:defI}
\mathcal{I}_{(s_1,s_2), (t_1,t_2)}(X,Y) = \left(\int_{u=t_1}^{t_2} \beta_{X_{s_1,s_2}}(Y_u) dY_u\right)^1
\end{equation}
\end{definition}

Note that this definition doesn't depend on the order of integration of $X$ and $Y$. Next is our second main result, an analogue of \cref{thm:PDE} for the case of geometric rough paths.

\begin{theorem}\label{thm:main}
 Let $\Delta_I, \Delta_J$ the two simplices, $p,q \geq 1$ be two scalars, and let $X \in G\Omega_p(V)$ and $Y \in G\Omega_q(V)$ be two geometric $p$- and $q$-rough paths respectively. For any $(s_1,s_2) \in \Delta_I$ and any $(t_1,t_2) \in \Delta_J$ the rough signature kernel of \cref{def:rough_kernel} satisfies the following equation
\begin{equation}\label{eqn:final}
K_{(s_1,s_2), (t_1,t_2)}(X, Y) = 1 + \mathcal{I}_{(s_1,s_2), (t_1,t_2)}(X, Y)
\end{equation}
where $\mathcal{I}$ is the double rough integral of \cref{def:I}.
\end{theorem}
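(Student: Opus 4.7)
The plan is to proceed by a density argument. Recall that $G\Omega_p(V)$ is defined as the closure in $p$-variation of lifts of smooth (bounded variation) paths in $V$, and analogously for $G\Omega_q(V)$ (see \cref{appendix:RPT}). I therefore aim to (i) establish \cref{eqn:final} on the dense subspace of pairs of smooth lifts, and then (ii) extend the identity to all of $G\Omega_p(V) \times G\Omega_q(V)$ by continuity of both sides.

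For the smooth case, I take $X = S(x)$ and $Y = S(y)$ with $x \in C^1(I, V)$ and $y \in C^1(J, V)$. Under this regularity every rough integral in the construction of $\mathcal{I}$ collapses to a classical Riemann--Stieltjes integral. Unwinding the linear one-forms \cref{eqn:alpha} and \cref{eqn:beta} and using the coproduct property \cref{eqn:property} to simplify the inner product appearing inside $\alpha$, a direct computation shows
\[
\mathcal{I}_{(s_1,s_2),(t_1,t_2)}(X,Y) \;=\; \int_{t_1}^{t_2}\!\!\int_{s_1}^{s_2} \langle S(x)_u, S(y)_v\rangle \langle \dot x_u, \dot y_v\rangle_V \, du\, dv.
\]
This is exactly the iterated integral that appears in the proof of \cref{thm:PDE}, which was shown there (in the chain of equalities preceding \cref{eqn:linearity} and following it) to equal $K_{(s_1,s_2),(t_1,t_2)}(X, Y) - 1$. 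Hence \cref{eqn:final} holds for smooth inputs.

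For the continuity step, the LHS is continuous on $G\Omega_p(V) \times G\Omega_q(V)$ equipped with the product $p,q$-variation topology by \cref{lemma:kernel}. For the RHS, I would appeal to the Universal Limit Theorem as presented in \cref{appendix:RPT}: since $\alpha_A$ and $\beta_Z$ are \emph{linear} one-forms, they belong to $Lip(\gamma)$ for every $\gamma \geq 0$, and the map sending a rough path to its integral against a fixed $Lip(\gamma)$ one-form (with $\gamma > p-1$, resp. $\gamma > q-1$) is continuous in $p$-variation (resp. $q$-variation). This immediately yields continuity of the inner integral $\int \alpha_A(X_u)dX_u$ in $X$ for each fixed $A$, together with Lipschitz dependence on $A$ uniformly over bounded sets; the continuity of the extended signature $Y \mapsto S(Y)$ on $\overline{T(V)}$ (\cref{thm:extension_theorem}) then propagates this to joint continuity in $(X, Y)$. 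A second application of the Universal Limit Theorem to the outer rough integral against $Y$ gives joint continuity of $\mathcal{I}$. Having verified continuity of both sides and their equality on a dense subset, I conclude that \cref{eqn:final} extends to the whole of $G\Omega_p(V) \times G\Omega_q(V)$.

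The main obstacle I anticipate is rigorously establishing the \emph{joint} continuity of $\mathcal{I}$ in $(X, Y)$: the one-form $\beta_{X_{s_1,s_2}}$ is itself built from a rough integral against $X$, so perturbations of $X$ must be propagated through two nested layers of rough integration, which requires assembling quantitative stability estimates from the Universal Limit Theorem while tracking the $p$- and $q$-variation norms separately. A secondary point requiring care is the reduction of $\mathcal{I}$ to the classical iterated integral in the smooth case: one must confirm that the lifts of $X$ and $Y$ to rough paths in $W = V \oplus \overline{T(V)}$ implicitly used in \cref{def:I} are given by $(x_\cdot, S(x)_\cdot)$ and $(y_\cdot, S(y)_\cdot)$, so that the coproduct property indeed reduces the integrand of the inner integral to $\langle S(X)_u, S(Y)_v\rangle$ as claimed.
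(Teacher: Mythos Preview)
Your proposal is essentially the paper's own proof: establish the identity for smooth/bounded-variation paths by reducing $\mathcal{I}$ to the classical iterated integral from \cref{thm:PDE}, invoke \cref{lemma:kernel} for continuity of $K$ and the continuity of rough integration against $Lip(\gamma)$ one-forms for continuity of $\mathcal{I}$, and then pass to the closure by density of $G\Omega_1(V)$ in $G\Omega_p(V)$ and $G\Omega_q(V)$. The paper cites \cite[Theorem 4.12]{lyons2004differential} rather than the Universal Limit Theorem for the continuity of $Z \mapsto \int \alpha(Z)\,dZ$, and is in fact briefer than you on the joint-continuity point you flag as the main obstacle, but the structure and ingredients are the same.
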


\begin{proof}
By \cite[Theorem 4.12]{lyons2004differential} if $Z \in G\Omega_p(V)$ is a geometric $p$-rough path and $\alpha: V \to L(V,W)$ is a $Lip(\gamma)$ one-form for some $\gamma>p$, then the mapping $Z \mapsto \int \alpha(Z)dZ$ is continuous from $G\Omega_p(V)$ to $G\Omega_p(W)$ in the $p$-variation topology. \medbreak

For any $A \in \overline{T(V)}$ and any $\widetilde Z \in G\Omega_p(V)$  both $\alpha_A$ and $\beta_{\widetilde Z}$ defined in Equations \cref{eqn:alpha} and \cref{eqn:beta} respectively are linear one-forms, hence $Lip(\gamma)$ for any $\gamma \geq 1$. Similarly if $\widetilde Z \in G\Omega_q(V)$. Thus, for any $(s_1,s_2) \in \Delta_I$ and any $(t_1,t_2) \in \Delta_J$, the map $\mathcal{I}_{(s_1,s_2), (t_1,t_2)} : G\Omega_p(V) \times G\Omega_q(V) \to \mathbb{R}$ is continuous in the $p,q$-variation product topology. \medbreak

By \cref{lemma:kernel}, the rough signature kernel $K_{(s_1,s_2), (t_1,t_2)}: G\Omega_p(E) \times G\Omega_q(E) \to \mathbb{R}$ is also continuous with respect to the $p,q$-variation product topology. \medbreak

Following the exact same steps as in the proof of \cref{thm:PDE}, if $X \in G\Omega_1(V)$ and $Y \in G\Omega_1(V)$ are both of bounded variation then the following double integral equation holds 
\begin{equation}
    K_{(s_1,s_2), (t_1,t_2)}(X,Y) = 1 + \int_{s=s_1}^{s_2}\int_{t=t_1}^{t_2} K_{(s_1,s), (t_1,t)}(X,Y)\langle dX_s,dX_t \rangle
\end{equation}
which is equivalent to the equality $K_{(s_1,s_2), (t_1,t_2)}(X, Y) = 1 + \mathcal{I}_{(s_1,s_2), (t_1,t_2)}(X, Y)$. \medbreak

By \cref{def:geom_rough_path} of a geometric $p$- (respectively $q$-) rough path as the limit of $1$-rough paths in the $p$- (respectively $q$-) variation topology, the space of continuous paths of bounded variation $G\Omega_1(V)$ is dense $G\Omega_p(V)$ (respectively $G\Omega_q(V)$). Two continuous functions that are equal on a dense subset of a set are also equal on the whole set. The functional equation $K_{(s_1,s_2), (t_1,t_2)}(\cdot, \cdot) = 1 +  \mathcal{I}_{(s_1,s_2), (t_1,t_2)}(\cdot, \cdot)$ holds on $\Omega^1 G(V) \times \Omega^1 G(V)$, which concludes the proof by the previous density argument.
\end{proof}

This is the last theoretical result of this paper. In \cref{sec:applications} we tackle various machine learning tasks dealing with time series data. 

\section{Data science applications}\label{sec:applications}

In this section we evaluate our signature PDE kernel on three different tasks. Firstly, we consider the task of multivariate time series classification on UEA\footnote{Data available at \url{https://timeseriesclassification.com}} datasets \cite{bagnall2018uea} with a \emph{support vector classifier} (SVC) and compare the performance obtained by equipping the same SVC configuration with various kernel functions, including ours. Secondly, we run a regression task to predict future (average) bitcoin prices from previously observed prices by means of a \emph{support vector regressor} (SVR) and similarly to the previous experiment, we compare the performance produced by a variety of kernels. Lastly, we show how the signature kernel can be easily incorporated within simple optimization procedure to represent the distribution of a large ensemble of paths as a weighted average of a small number of selected paths from the ensemble whilst maintaining certain statistical properties \cite{cosentino2020randomized}. \medbreak

In presence of sequential inputs, well-designed kernels must be chosen with care. \cite{sapankevych2009time}. In the case where all the time series inputs are of the same length, standard kernels on $\mathbb{R}^d$ can be deployed by stacking each dimension of the time series into one single vector. Standard choices of kernels include the linear and Gaussian (a.k.a. RBF) kernels. When the series are not of the same length, other kernels specifically designed for time series can be used to address this issue. Other than the signature PDE kernel introduced in this paper, to our knowledge only two other kernels for sequential data have been proposed in the literature: the truncated signature kernel \cite{kiraly2019kernels} (Sig($n$) - where $n$ denotes the truncation level) and the \emph{global alignment kernel} (GAK) \cite{cuturi2011fast}. For the classification and regression experiments we made use of the SVC and SVR estimators respectively, from the popular python library \texttt{tslearn} \cite{JMLR:v21:20-091}. \medbreak

\paragraph{Hyperparameter selection} The hyperparameters of the SVC and SVR estimators were selected by cross-validation via a grid search on the training set. For the classification we used the train-test split as provided by UEA, whilst for the regression we used a 80-20 split. Both the SVC and SVR estimators depend on a kernel $k$ and on two scalar parameters $C$ and $\gamma$. The range of values for $C$ was chosen to be $\{1, 10, ..., 10^4\}$ and the one for $\gamma$ to be $\{10^{-4},..., 10^4\}$ for all kernel functions included in the comparison. We benchmark our Sig-PDE kernel against the Linear, RBF and GAK \cite{cuturi2011fast} kernels as well as the truncated signature kernel Sig($n$) from \cite{kiraly2019kernels}. We found that the algorithm provided to us by \cite{comm} was in general slower than directly computing the truncated signatures with \texttt{iisignature} \cite{reizenstein2018iisignature}. This is because the former was implemented as pure python code, whilst \texttt{iisignature} is highly optimized and uses a \texttt{C++} backend. For this reason, we ended up computing Sig($n$) without kernel trick for all the experiments. The truncation $n$ is chosen from the range $\{2,...,6\}$. Furthermore, we added a variety of additional hyperparameters to Sig($n$) consisting in: 1) scaling the paths by different scalar scales, 2) normalizing the truncated signatures by multiplying (or not) each level $\ell \in \{1,...,n\}$ by $\ell!$, 3) equipping the SVC/SVR with a Linear or RBF kernel (indexed on truncated signatures). For our Sig-PDE we used the RBF-lifted version with parameter $\sigma$ taken in the range $\{10^{-3}, ..., 10^1\}$. All the experiments are reproducible using the code in \url{https://github.com/crispitagorico/sigkernel} and following the instructions thereafter.

\subsection{Multivariate time series classification}\label{sec:classification}

The support vector classifier (SVC) \cite{vapnik1998support} is one of the simplest yet widely used supervised learning model for classification. It has been successfully used in the fields of text classification \cite{tong2001support}, image retrieval \cite{tong2001support1}, mathematical finance \cite{huang2005forecasting}, medicine \cite{furey2000support} etc. We considered various UEA datasets \cite{bagnall2018uea} of input-output pairs $\{(x_i,y_i)\}_{i=1}^n$ where each $x_i$ is a multivariate time series and each $y_i$ is the corresponding class. In \cref{tab:svc} we display the performance of the same SVC equipped with different kernels (including ours). As the results show, our Sig-PDE kernel is systematically among the top $2$ classifiers across all the datasets (except for FingerMovements and UWaveGestureLibrary) and always outperforms its truncated counterpart, that often overfits during training.

\begin{table}[ht]\small
    \begin{center}
        \begin{tabular}{lccccc}
        \toprule
        \textbf{Datasets/Kernels} &
        Linear &
        RBF &
        GAK & 
        Sig(n) &
        Sig-PDE \\
        \midrule
        ArticularyWordRecognition & 98.0 & 98.0 & 98.0 & 92.3 & \textbf{98.3}\\
        BasicMotions & 87.5 & 97.5 & 97.5 & 97.5 & \textbf{100.0} \\
        Cricket & 91.7 & 91.7 & \textbf{97.2} & 86.1 & \textbf{97.2}\\
        ERing & 92.2 & 92.2 & \textbf{93.7} & 84.1 & 93.3\\
        Libras & 73.9 & 77.2 & 79.0 & \textbf{81.7} & \textbf{81.7}\\
        NATOPS & 90.0 & 92.2 & 90.6 & 88.3 & \textbf{93.3}\\
        RacketSports & 76.9 & 78.3 & 84.2 & 80.2 & \textbf{84.9}\\
        FingerMovements & 57.0 & 60.0 & \textbf{61.0} & 51.0 & 58.0\\
        Heartbeat & 70.2 & 73.2 & 70.2 & 72.2 & \textbf{73.6}\\
        SelfRegulationSCP1 & 86.7 & 87.3 & \textbf{92.4} & 75.4 & 88.7\\
        UWaveGestureLibrary & 80.0 & \textbf{87.5} & \textbf{87.5} & 83.4 & 87.0\\
        \bottomrule
        \end{tabular}
    \end{center}
    \caption{Test set classification accuracy (in \%) on UEA multivariate time series datasets.}
    \label{tab:svc}
\end{table}

\subsection{Predicting Bitcoin prices}
In the last few years, there has a remarkable rise of cryptocurrency trading where the most popular currency, Bitcoin, reached its peak at almost $20,000$ USD/BTC at the end of the year 2017 followed by a big crash in November 2018, when the price dropped to around $3000$ USD/BTC. In this section, we will use daily Bitcoin to USD prices data\footnote{Data is from \url{https://www.cryptodatadownload.com/}} from Gemini which is one of the biggest cryptocurrency trading platforms in the US. Our goal is to use a window of size $36$ days to predict the mean price of the next $2$ days. As shown in \cref{tab:svr}, SVR equipped with the Sig-PDE is able to generalise better to unseen prices and produces better predictions on the test set in terms of MAPE compared to all other benchmarks. We also note that the truncated signature kernel doesn't seem to generalize well to unseen observation for this regression exmaple. In \cref{fig:Bitcoin} we plot the predictions obtained with SVR-Sig-PDE on the train and test sets.

\begin{figure}[h]
    \centering
    \includegraphics[scale=0.5]{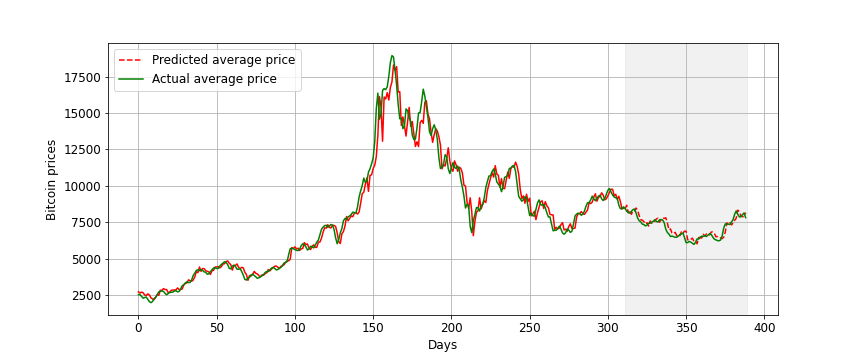}
    \caption{{\small SVR-Sig-PDE predictions of average Bitcoin prices over the next 2 days from Bitcoin prices over the previous 36 days. In white are the predicted prices in the training set; in grey are the predicted prices in the test set. Trading days range from '2017-06-01' to '2018-08-01'.}}
    \label{fig:Bitcoin}
\end{figure}

\begin{table}[ht]\small
    \begin{center}
        \begin{tabular}{lcccc}
        \toprule
        Kernel &
        RBF &
        GAK & 
        Sig(n) &
        Sig-PDE \\
        \midrule
        MAPE & 4.094 & 4.458 & 13.420 & \textbf{3.253} \\
        \bottomrule
        \end{tabular}
    \end{center}
    \caption{Test set \textit{mean absolute percentage error} (MAPE) (in \%) to predict the average Bitcoin price over the next $2$ days given prices over the previous $36$ days.}
    \label{tab:svr}
\end{table}

\subsection{Moments-matching reduction algorithm for  the support of a discrete measure on paths}

As described in \cite{chen2012super},  \emph{ herding} refers to any procedure to approximate integrals of functions in a \emph{reproducing kernel Hilbert space} (RKHS). In particular, such procedure can be useful to estimate \emph{kernel mean embeddings} (KMEs) as we shall explain next. Consider a set $\mathcal{X}$ and a feature map $\Phi$ from $\mathcal{X}$ to an RKHS $H$ with $k$ being the associated positive definite kernel. All elements of $H$ may be identified with real functions $f$ on $\mathcal{X}$ defined by $f(x) = \langle f, \Phi(x) \rangle$ for $x \in \mathcal{X}$. Following \cite{smola2007hilbert} for a fixed probability measure $\mu$ on $\mathcal{X}$ we seek to approximate the KME $\mathbb{E}_\mu \Phi := \int_\mathcal{X} \Phi(x) d\mu(x)$, that belongs to the convex hull of $\{\Phi(x)\}_{x \in \mathcal{X}}$ \cite{bach2012equivalence}. To approximate $\mathbb{E}_\mu \Phi$, we consider $n$ points $x_1, ..., x_n \in \mathcal{X}$ combined linearly with positive weights $w_1, ..., w_n$ that sum to $1$. We then consider the discrete measure $\nu = \sum_{i=1}^n w_i\delta_{x_i}$ and as shown in \cite{bach2012equivalence} we have that
\begin{equation}
    \sup_{f \in H, ||f||\leq 1} |\langle \mathbb{E}_{\nu} \Phi, f\rangle - \langle \mathbb{E}_\mu \Phi, f\rangle| = ||\mathbb{E}_{\nu} \Phi - \mathbb{E}_\mu \Phi||_H
\end{equation}
which means that controlling  $\mathbb{E}_{\hat\mu} \Phi - \mathbb{E}_\mu \Phi$ is enough to control the error
in computing the expectation for all $f \in H$ with norm bounded by $1$. We are interested in the setting where $\mathcal{X}$ is a set of paths of bounded variation taking values on a $d$-dimensional space $E$ (or in practice a set of multivariate time series for example). The signature being a natural feature map for sequential data we set $\Phi = S$, $k$ to be the untruncated signature kernel and $H=\overline{T(E)}$. Following \cite{litterer2012high, cosentino2020randomized}, we consider the problem of reducing the size of the support in $\mathcal{X}$ of a discrete measure $\mu$ whilst preserving some of its statistical properties. Suppose $\#supp(\mu) = N$, where $N$ is large, and $\mu = \sum_{i=1}^N \alpha_i \delta_{x_i}, x_i \in \mathcal{X}$. We call a measure $\nu$ on $\mathcal{X}$ a \emph{reduced measure} with respect to $\mu$ if 
\begin{equation*}
    1) \ supp(\nu) \subset supp(\mu) \quad \text{and} \quad 2) \ 
    \mathbb{E}_\nu S \approx \mathbb{E}_\mu S \quad \text{(in some suitable norm)}
\end{equation*}
We fix the size of the support of the reduced measure $\nu$ to be $\# supp(\nu) = n$, so that $n<<N$. Because of condition 1) we have that $\nu$ is of the form $\mu = \sum_{i=1}^N \beta_i \delta_{x_i}$, where all but $n$ of the weights $\beta_i$'s are equal to $0$. Therefore the vector of weights $\beta=(\beta_1, ..., \beta_N) \in \mathbb{R}^N$ is sparse. We are interested in the following optimization problem
\begin{align*}
\min_{\beta \in \mathbb{R}^N} || \mathbb{E}_\nu S  - \mathbb{E}_\mu S||_{T(E)}^2 &= \min_{\beta \in \mathbb{R}^N} || \sum_{i=1}^N (\alpha_i - \beta_i)S(x_i)||_{T(E)}^2 \\
&= \min_{\beta \in \mathbb{R}^N} \Big\langle \sum_{i=1}^N (\alpha_i - \beta_i)S(x_i), \sum_{j=1}^N (\alpha_j - \beta_j)S(x_i) \Big\rangle_{T(E)} \\
&= \min_{\beta \in \mathbb{R}^N} \underbrace{\sum_{i,j=1}^N (\alpha_i - \beta_i)(\alpha_j - \beta_j) k(x_i,x_j)}_{:= L(\beta)}
\end{align*}

where $k$ is the signature kernel. This minimisation will not yield a sparse vector $\beta$. To induce sparsity we use an $l_1$ penalisation on the weights $\beta$ as in LASSO, which amounts to the following Lagrangian minimisation
\begin{equation}\label{eqn:min_lasso}
\min_{\beta \in \mathbb{R}^N} L(\beta) + \lambda ||\beta||_1
\end{equation}
where $\lambda$ is a penalty parameter determined by the size $n$ of the support of $\nu$. Equation (\ref{eqn:min_lasso}) minimises a function $f:\mathbb{R}^N \to \mathbb{R}$ that can be decomposed as $f = L + h$, where $L$ is differentiable and $h=\lambda ||\cdot||_1$ is convex but non-differentiable, so a gradient descent algorithm can't be directly applied. \textit{Subgradient descent methods} are classical algorithms that address this issue but have poor convergence rates \cite{bach2011optimization}. A better choice of algorithms for this particular problem are called \textit{proximal gradient methods} \cite{schmidt2011convergence}. Define the soft-thresholding operator $A_{\gamma} : \mathbb{R}^N \to \mathbb{R}^N$ as follows
\begin{equation}
A_{\gamma}(\beta)_i =
\begin{cases}
  \beta_i - \gamma, & \text{if $\beta_i>\gamma$}\\
  0, & \text{if $|\beta_i|\leq\gamma$}\\
  \beta_i + \gamma, & \text{if $\beta_i<-\gamma$}
\end{cases}
\end{equation}

Then, it can be shown \cite{schmidt2011convergence} that $\beta^*$ is a minimiser of the optimisation (\ref{eqn:min_lasso}) if and only if $\beta^*$ solves the following fixed point problem
\begin{equation}\label{eqn:fixed_point}
    \beta^* = A_\gamma(\beta^* - \gamma \nabla_\beta L(\beta^*))
\end{equation}

The fixed point problem \cref{eqn:fixed_point} can be solved iteratively fixing $\beta^0 \in \mathbb{R}^N$ and for $k\geq 1$
\begin{equation}
    \beta^{k+1} = A_\gamma(\beta^k - \gamma \nabla_\beta L(\beta^k))
\end{equation}

Proximal gradient descent methods convergence with rate $O(1/\epsilon)$ which is an order of magnitude better that the $O(1/\epsilon^2)$ convergence rate of subgradient methods \cite{schmidt2011convergence}. 

\begin{figure}[ht]
\centering
\centerline{\includegraphics[scale=0.45]{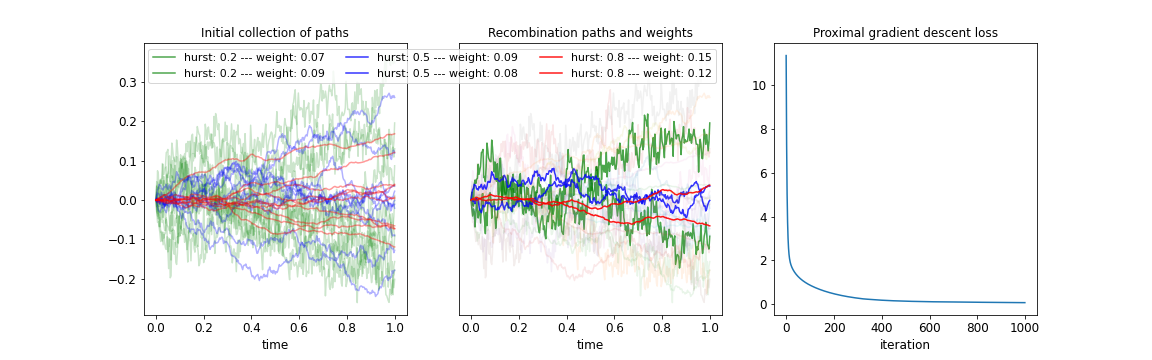}}
\caption{{\small On the \textbf{left} are $30$ fBM sample paths. In the \textbf{middle} are the results obtained by solving the optimisation \cref{eqn:min_lasso}. On the \textbf{right} is the loss as a function of the proximal gradient descent iteration.}}
\label{fig:recombination}
\end{figure}

We apply the above proximal gradient descent algorithm to an example of a set of $30$ sample paths of fractional Brownian Motion (fBM) with Hurst exponent drawn uniformly at random from $\{0.2,0.5,0.8\}$ (note that fBM($0.5$) corresponds to Brownian motion). The goal is to compute a reduced measure with smaller support size. We choose a value of the penalisation constant $\lambda$ in \cref{eqn:min_lasso} so that the new support is of size $=6$. The selected paths with corresponding weights are displayed in \cref{fig:recombination}. This selection is clearly well-balanced across the samples ($2$ paths per exponent) so more likely to well-represent the initial ensemble.

\begin{remark}
    We note that our signature PDE kernel has been succesfully deployed to perform \emph{distribution regression on sequential data} in \cite{lemercier2020distribution} and malware detection in \cite{cochrane2021sk}.
\end{remark}

\section{Conclusion}

In this paper we introduce the signature PDE kernel and show that when paths are continuously differentiable the latter solves a hyperbolic PDE. We recognize the connection with a well known class of differential equations known in the literature as Goursat problems. Our Goursat PDE can be solved numerically using state-of-the-art hyperbolic PDE solvers; we propose an efficient finite different scheme to do so that has linear time complexity when implemented on \texttt{GPU} and analyse its convergence properties. We extend the previous analysis to the case of geometric rough paths and establish a rough integral equation analogous to the aforementioned Goursat problem. Finally we demonstrated the effectiveness of our kernel in various data science applications dealing sequential data. 

\section*{Acknowledgements}

We thank Maud Lemercier for her help with the implementation of \texttt{sigkernel}, Franz Kiraly and Harald Oberhauser for the helpful comments and the referees for pointing out an error in the experiments in the previous version of the paper.

\bibliographystyle{unsrt}
\bibliography{references}

\newpage

\appendix

\section{Error analysis for the numerical scheme}\label{append:error} In this section, we show that the finite difference scheme \cref{eqn:explicit_finite_diff} achieves a second order convergence rate for the Goursat problem \cref{eqn:PDE}. Our analysis is based on an explicit representation of the PDE solution.\medbreak

\begin{theorem}[Example 17.4 from \cite{polyanin2015handbook}]\label{thm:simple_Goursat_sol}
Consider the following specific case of the general Goursat problem (\ref{eqn:Goursat}) on the domain $\mathcal{D} = \{(s,t) \mid u\leq s \leq u', v \leq t \leq v'\}$:
\begin{equation}\label{eqn:simple_Goursat}
\frac{\partial^2 k}{\partial s \partial t} = C_3 k,
\end{equation}
where $C_3$ is constant and the boundary data $k(s,v) = \sigma(s)$, $k(u,t) = \tau(t)$ is differentiable. Then the solution $k$ can be expressed as
\begin{equation}\label{eqn:simple_Goursat_sol1}
k(s,t) = k(u,v)\,R(s-u,t-v) + \int_u^s \sigma^{\prime}(r)\,R(s - r,t - v)\,dr + \int_v^t \tau^{\prime}(r)\,R(s - u, t - r)\,dr,
\end{equation}
for $s,t\in\mathcal{D}$, where the Riemann function $R$ is defined as $R(a,b) := J_0\big(2i\sqrt{C_{3\,} ab}\,\big)$ for $a,b\geq 0$, with $J_0$ denoting the zero order Bessel function of the first kind.
\end{theorem}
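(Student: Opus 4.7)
The plan is to verify the claimed representation by direct substitution: establish the relevant identities for the Riemann function $R$, check that the candidate $\tilde k$ defined by the right-hand side of \cref{eqn:simple_Goursat_sol1} matches the prescribed boundary data and satisfies the PDE \cref{eqn:simple_Goursat}, then appeal to the uniqueness part of \cref{thm:goursat} (specialised to $C_1 = C_2 = C_4 = 0$ with constant $C_3$) to conclude that $\tilde k = k$.

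The first ingredient is the power-series expansion of the Riemann function. From $J_0(2iz) = \sum_{k=0}^\infty z^{2k}/(k!)^2$ one obtains $R(a,b) = \sum_{k=0}^\infty (C_3 ab)^k/(k!)^2$, with convergence uniform on compact subsets of $\mathbb{R}_{\geq 0}^2$, so that termwise differentiation is legitimate. This representation immediately yields the four identities that drive the whole argument: $R(a,0) = R(0,b) = 1$; $\partial_a R(a,0) = \partial_b R(0,b) = 0$ (every nonconstant term carries the factor $ab$); and $\partial_a \partial_b R(a,b) = C_3 R(a,b)$.

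Writing $\tilde k$ for the right-hand side of \cref{eqn:simple_Goursat_sol1}, the boundary condition at $t=v$ follows from $\tilde k(s,v) = k(u,v) + \int_u^s \sigma'(r)\,dr = k(u,v) + \sigma(s) - \sigma(u) = \sigma(s)$, using the compatibility $\sigma(u) = k(u,v)$; symmetrically $\tilde k(u,t) = \tau(t)$. For the PDE, I would differentiate the three summands separately. The first yields $C_3\, k(u,v)\, R(s-u,t-v)$ directly from $\partial_a \partial_b R = C_3 R$. For the second, differentiating first in $t$ passes under the integral (the limits are $t$-independent) to produce $\int_u^s \sigma'(r)\, \partial_b R(s-r,t-v)\,dr$; subsequent differentiation in $s$, via Leibniz, gives a boundary contribution $\sigma'(s)\,\partial_b R(0,t-v) = 0$ plus the interior term $C_3 \int_u^s \sigma'(r)\,R(s-r,t-v)\,dr$. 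The third summand is handled symmetrically, with $\partial_a R(\cdot,0) = 0$ annihilating the analogous boundary contribution. Summing gives $\partial_s \partial_t \tilde k = C_3 \tilde k$, and uniqueness completes the proof.

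The main delicacy lies in the Leibniz step of the PDE verification: naively one picks up boundary contributions $\sigma'(s)\,\partial_b R(0,t-v)$ and $\tau'(t)\,\partial_a R(s-u,0)$, and the formula would fail if these did not vanish. Their disappearance is precisely encoded in the identities $\partial_a R(a,0) = 0$ and $\partial_b R(0,b) = 0$, which is the true reason the Bessel function $J_0(2i\sqrt{C_3 ab})$ is the correct Riemann kernel here and which falls out for free from the product structure $ab$ inside each nonconstant term of the series. The remaining manipulations—dominated convergence to justify differentiation under the integral, and continuity of $\partial_a \partial_b R$ on $\mathcal{D}$—are routine.
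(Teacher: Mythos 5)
Your verification is correct. Note, however, that the paper does not prove this statement at all: it is imported verbatim as Example 17.4 of the cited handbook \cite{polyanin2015handbook}, so there is no internal argument to compare against. What you supply is the natural self-contained check: the series form $R(a,b)=\sum_{k\geq 0}(C_3ab)^k/(k!)^2$ (which follows from $J_0(2iz)=\sum_k z^{2k}/(k!)^2$ and also disposes of any ambiguity about $\sqrt{C_3ab}$ when $C_3<0$), the four identities $R(a,0)=R(0,b)=1$, $\partial_aR(a,0)=\partial_bR(0,b)=0$, $\partial_a\partial_bR=C_3R$, the boundary check via the compatibility $\sigma(u)=\tau(v)=k(u,v)$, and the Leibniz computation in which the boundary contributions are killed by exactly the second pair of identities. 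All of these steps are sound, and your identification of the vanishing boundary terms as the defining property of the Riemann kernel is the right way to see why $J_0$ appears. Two small caveats are worth recording. First, the uniqueness appeal to \cref{thm:goursat} requires $\sigma,\tau$ absolutely continuous with square-integrable derivatives; the statement's hypothesis ``differentiable'' does not literally guarantee this, though in the paper's application (\cref{thm:local_error}) the boundary data is differentiable with derivative of bounded variation, so the gap is immaterial there. Second, the pointwise Leibniz step evaluating $\sigma'(s)\,\partial_bR(0,t-v)$ tacitly uses some regularity of $\sigma'$ (e.g.\ continuity or bounded variation) beyond mere pointwise differentiability; again this holds in every use the paper makes of the formula. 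Neither caveat affects the correctness of the representation under the hypotheses actually in force downstream.
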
\medbreak

\begin{remark}
To simplify notation, we shall use the $n$-th order modified Bessel function $I_n(z) := i^{-n}J_n(iz)$. It directly follows from the series expansion of $J_n(2z)$ \cite[Section 4.5]{andrews2001functions} that 
\begin{equation}\label{eqn:bessel_series}
I_n(2z) = \Bigg(\sum_{k=0}^\infty \frac{z^{2k}}{k!(n+k)!}\Bigg)z^n,
\end{equation}
From the identities (4.6.1), (4.6.2), (4.6.5), (4.6.6) in \cite{andrews2001functions}, we can compute derivatives of $I_0$ as
\begin{align}
I_0^\prime(z) & = I_1(z),\label{eqn:bessel_identity_1} \\[3pt]
I_0^{\prime\prime}(z) & = I_2(z) + z^{-1}I_1(z).\label{eqn:bessel_identity_2}
\end{align}
\end{remark}\medbreak

Using \cref{thm:simple_Goursat_sol} and the above identities, we will perform a local error analysis for the explicit scheme \cref{eqn:explicit_finite_diff}. \medbreak

\begin{theorem}[Local error estimates for the explicit scheme]\label{thm:local_error} Consider the Goursat problem \cref{eqn:simple_Goursat} on the domain $\mathcal{D} = \{(s,t) \mid u\leq s \leq u', v \leq t \leq v'\}$:
\begin{equation*}
\frac{\partial^2 k}{\partial s \partial t} = C_3 k,
\end{equation*}
where $C_3$ is constant and the boundary data $u(s,v) = \sigma(s)$, $u(u,t) = \tau(t)$ is differentiable
and of bounded variation. We define the local approximation error of the explicit scheme \cref{eqn:explicit_finite_diff} as
\begin{equation*}
E(s,t) := k(s,t) - \Big(k(s,v) + k(u,t) - k(u,v) + \frac{1}{2}\big(k(s,v) + k(u,t)\big)\, C_3(s-u)(t-v)\Big).
\end{equation*}
Then
\begin{align}\label{eqn:error_estimate_1}
|E(s,t)| & \leq \frac{1}{2}|C_3|\big(\|\sigma\|_{1,[u,s]} + \|\tau\|_{1,[v,t]}\big)(s-u)(t-v) \sup_{z\in[0,C_3(s-u)(t-v)]}\bigg|\frac{I_1(2\sqrt{z}\,)}{\sqrt{r}}\bigg|\\[3pt]
&\hspace{8.5mm} + \frac{1}{2}|C_3||k(s,v) + k(u,t)|(s-u)(t-v) \sup_{z\in[0,C_{3}(s-u)(t-v)]}\bigg|\frac{I_1(2\sqrt{z}\,)}{\sqrt{z}} - 1\bigg|\,.\nonumber
\end{align}
In addition, if $\sigma, \tau$ are twice differentiable and their derivatives have bounded variation, then
\begin{align}\label{eqn:error_estimate_2}
|E(s,t)| & \leq \frac{1}{2}|C_3|\big(\|\sigma^\prime\|_{1,[u,s]}(s-u) + \|\tau^\prime\|_{1,[v,t]}(t-v)\big)(s-u)(t-v)\sup_{z\in[0,C_3(s-u)(t-v)]}\bigg|\frac{I_1(2\sqrt{z}\,)}{\sqrt{z}}\bigg|\\[3pt]
&\hspace{3.5mm} + \frac{1}{12}|C_3|^2\big(|\sigma^\prime(u)|(s-u) + |\tau^\prime(v)|(t-v)\big)(s-u)^2(t-v)^2\hspace{-1mm}\sup_{z\in[0,C_{3}(s-u)(t-v)]}\bigg|\frac{I_2(2\sqrt{z}\,)}{z}\bigg|\nonumber\\[3pt]
&\hspace{3.5mm} + \frac{1}{2}|C_3||k(s,v) + k(u,t)|(s-u)(t-v)\sup_{z\in[0,C_{3}(s-u)(t-v)]}\bigg|\frac{I_1(2\sqrt{z}\,)}{\sqrt{r}} - 1\bigg|\,.\nonumber
\end{align}
\end{theorem}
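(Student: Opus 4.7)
The plan is to leverage the explicit Riemann-function representation (\ref{eqn:simple_Goursat_sol1}) of the Goursat solution, in which $R(a,b) = I_0(2\sqrt{C_3 ab})$, and subtract from it the proposed finite-difference approximation term by term. Write $z_0 := C_3(s-u)(t-v)$, $z(r,v) := C_3(s-r)(t-v)$, $z(u,r) := C_3(s-u)(t-r)$, and use the boundary identifications $k(s,v) = \sigma(s)$, $k(u,t) = \tau(t)$, $k(u,v) = \sigma(u) = \tau(v)$, together with the identity $k(s,v) + k(u,t) = 2k(u,v) + \int_u^s \sigma'(r)\,dr + \int_v^t \tau'(r)\,dr$, to rewrite the approximation as $k(u,v)(1+z_0) + (1+\tfrac{z_0}{2})\int_u^s \sigma'(r)\,dr + (1+\tfrac{z_0}{2})\int_v^t \tau'(r)\,dr$. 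Matching against (\ref{eqn:simple_Goursat_sol1}) produces the clean decomposition
\begin{equation*}
E(s,t) = k(u,v)\bigl[I_0(2\sqrt{z_0}) - 1 - z_0\bigr] + \int_u^s \sigma'(r)\bigl[I_0(2\sqrt{z(r,v)}) - 1 - \tfrac{z_0}{2}\bigr]\,dr + \int_v^t \tau'(r)\bigl[I_0(2\sqrt{z(u,r)}) - 1 - \tfrac{z_0}{2}\bigr]\,dr.
\end{equation*}

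The Bessel identity (\ref{eqn:bessel_identity_1}) together with the chain rule yields $I_0(2\sqrt{z}) - 1 = \int_0^z \frac{I_1(2\sqrt{w})}{\sqrt{w}}\,dw$, and subtracting $z = \int_0^z 1\,dw$ gives $I_0(2\sqrt{z}) - 1 - z = \int_0^z \bigl[\frac{I_1(2\sqrt{w})}{\sqrt{w}} - 1\bigr]\,dw$; these furnish the fundamental pointwise bounds $|I_0(2\sqrt{z}) - 1| \leq z\sup_{w\in[0,z]}|I_1(2\sqrt{w})/\sqrt{w}|$ and $|I_0(2\sqrt{z}) - 1 - z| \leq z\sup_{w \in [0,z]}|I_1(2\sqrt{w})/\sqrt{w} - 1|$. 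For estimate (\ref{eqn:error_estimate_1}) I would split each of the two inner brackets as $[I_0(2\sqrt{z(r,v)}) - 1 - z(r,v)] + [z(r,v) - \tfrac{z_0}{2}]$: the Taylor-remainder piece is controlled by $z_0\sup|I_1/\sqrt{\cdot} - 1|$ and, after integration against $\sigma', \tau'$ and combination with the $k(u,v)[\cdots]$ term, exposes the coefficient $|k(s,v)+k(u,t)|$ via the identity above. The algebraic midpoint piece satisfies $|z(r,v) - \tfrac{z_0}{2}| \leq \tfrac{z_0}{2}$ and, using that $\sup|I_1(2\sqrt{\cdot})/\sqrt{\cdot}| \geq 1$ (immediate from (\ref{eqn:bessel_series})), is absorbed into the $\sup|I_1/\sqrt{\cdot}|$ coefficient with weight $\|\sigma\|_{1,[u,s]} + \|\tau\|_{1,[v,t]}$.

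For the sharper estimate (\ref{eqn:error_estimate_2}), under the additional assumption that $\sigma, \tau$ are twice differentiable with $\sigma', \tau'$ of bounded variation, I would integrate by parts once in each of the $\sigma', \tau'$ integrals, transferring a derivative onto $I_0(2\sqrt{\cdot})$. This generates boundary contributions proportional to $\sigma'(u), \tau'(v)$ together with weights $(s-u)^2, (t-v)^2$ arising from the second-order Taylor remainder $I_0(2\sqrt{z}) - 1 - z = \int_0^z (z-w)\frac{I_2(2\sqrt{w})}{w}\,dw$ (obtained by iterating the derivative identities (\ref{eqn:bessel_identity_1})--(\ref{eqn:bessel_identity_2})), while the residual inner integrals against $\sigma'', \tau''$ are controlled via $\|\sigma'\|_{1,[u,s]}, \|\tau'\|_{1,[v,t]}$.

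The main obstacle will be the precise algebraic bookkeeping in estimate (\ref{eqn:error_estimate_1}): one must distribute the $k(u,v)$ boundary term and the two $\sigma', \tau'$ Taylor-remainder contributions exactly so that the combination $2k(u,v) + \int \sigma' + \int \tau' = k(s,v) + k(u,t)$ is exposed paired with $\sup|I_1/\sqrt{\cdot} - 1|$, rather than settling for the coarser bound $|k(u,v)| + \|\sigma\|_{1,[u,s]} + \|\tau\|_{1,[v,t]}$.
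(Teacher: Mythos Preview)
Your initial decomposition of $E(s,t)$ into the $k(u,v)$ term plus the two boundary integrals is correct and coincides with the paper's starting point. The gap is exactly where you flag it: the splitting $[I_0(2\sqrt{z(r,v)})-1-z(r,v)]+[z(r,v)-\tfrac{z_0}{2}]$ cannot recover the precise coefficient $|k(s,v)+k(u,t)|$. The Taylor-remainder bracket depends on $r$, so it does not factor out of the $\sigma'$ integral; bounding it pointwise by $|z_0|\sup|I_1(2\sqrt{\cdot})/\sqrt{\cdot}-1|$ forces absolute values on $\sigma'(r)$ and lands you on $|k(u,v)|+\|\sigma\|_{1}+\|\tau\|_{1}$ rather than $\tfrac12|k(s,v)+k(u,t)|$. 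Your midpoint piece does give the $\|\sigma\|_1+\|\tau\|_1$ term of (\ref{eqn:error_estimate_1}) correctly (using $\sup|I_1/\sqrt{\cdot}|\geq 1$), but the other half of the bound is not reachable from your split.

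The paper's resolution is a different add-and-subtract: instead of comparing $R(s-r,t-v)$ to its first-order Taylor value $1+z(r,v)$, compare it to the \emph{endpoint average} $\tfrac12\bigl(R(0,t-v)+R(s-u,t-v)\bigr)=\tfrac12\bigl(1+I_0(2\sqrt{z_0})\bigr)$. This produces $E=E_1+E_2$ with $E_2=\tfrac12\bigl(k(s,v)+k(u,t)\bigr)\bigl[z_0-(I_0(2\sqrt{z_0})-1)\bigr]$, where the desired coefficient appears \emph{before} any estimation; and $E_1=E_3+E_4$ with $E_3=\int_u^s\sigma'(r)\bigl[R(s-r,t-v)-\tfrac12(R(0,t-v)+R(s-u,t-v))\bigr]dr$. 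The bracket in $E_3$ is the deviation of a function from the average of its endpoint values, hence bounded by $\tfrac12(s-u)\sup_w|\partial_w R(w,t-v)|$, which via (\ref{eqn:R_partial_1}) gives the $\|\sigma\|_1$ term directly.

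For (\ref{eqn:error_estimate_2}) the paper does not integrate by parts; it writes $\sigma'(r)=\sigma'(u)+\int_u^r\sigma''(w)\,dw$ inside $E_3$. The $\sigma'(u)$ contribution is then precisely the trapezoidal-rule quadrature error for $\int_u^s R(s-r,t-v)\,dr$, controlled by $\tfrac{1}{12}(s-u)^3\sup|\partial_w^2 R|$, and (\ref{eqn:R_partial_2}) supplies the $I_2$ factor. The remaining $\int\sigma''$ piece reuses the first-order estimate on the bracket and yields the $\|\sigma'\|_1$ term. Your second-order Taylor integral representation for $I_0-1-z$ would work too, but note that the $\sigma'(u)$ boundary term arises from this Taylor decomposition of $\sigma'$, not from an integration by parts transferring a derivative onto $I_0$.
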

\begin{remark}
From \cref{eqn:bessel_series}, it is clear that $\frac{I_1(2\sqrt{z}\,)}{\sqrt{z}}\sim 1$, $\frac{I_2(2\sqrt{z}\,)}{z}\sim \frac{1}{2}$ and $\frac{I_1(2\sqrt{z}\,)}{\sqrt{z}} - 1\sim \frac{1}{2}z$.
\end{remark}\medbreak
\begin{proof}
To begin, we decompose the approximation error as $E(s,t) = E_1 + E_2$ where
\begin{align*}
E_1 & := k(s,t) - \Big(k(s,v) + k(u,t) - k(u,v) + \frac{1}{2}\big(k(s,v) + k(u,t)\big)\big(R(s-u,t-v) - 1\big)\Big),\\[3pt]
E_2 & := \frac{1}{2}\big(k(s,v) + k(u,t)\big)\big(C_3(s-u)(t-v) - \big(R(s-u,t-v) - 1\big)\big). 
\end{align*}
Since $R(s - u, 0) = R(0, t-v) = 1$, $\sigma(s) = \tau(v) = k(u,v)$, $\sigma(s) =k(s,v)$ and $\tau(t) =  k(u,t)$, it follows that
\begin{align*}
k(s,v) + k(u,t) - k(u,v) + \frac{1}{2}& \big(k(s,v) + k(u,t)\big)\big(R(s-u,t-v) - 1\big)\\
= k(u,v)\,R(s-u,t-v) & + \frac{1}{2}\big(k(s,v) -2k(u,v)+ k(u,t)\big)\big(R(s-u,t-v) + 1\big)\\[3pt]
= k(u,v)\,R(s-u,t-v) & + \frac{1}{2}\big((\sigma(s) - \sigma(u)) + (\tau(t) - \tau(v))\big)\big(R(s - u,t - v) + 1\big)\\[3pt]
= k(u,v)\,R(s-u,t-v) & + \frac{1}{2}\bigg(\int_u^s \sigma^{\prime}(r)\,dr + \int_v^t \tau^{\prime}(r)\,dr \bigg)\big(R(s - u,t - v) + 1\big)\\[3pt]
= k(u,v)\,R(s-u,t-v) & + \frac{1}{2}\int_u^s \sigma^{\prime}(r)\big(R(0,t - v) + R(s - u,t - v)\big)\,dr\\
& + \frac{1}{2}\int_v^t \tau^{\prime}(r)\big(R(s - u, 0) + R(s - u,t - v)\big)\,dr.
\end{align*}
Hence by (\ref{eqn:simple_Goursat_sol1}), we can write $E_1 = E_3 + E_4$ where the error terms $E_3$ and $E_4$ are given by
\begin{align*}
E_3 & := \int_u^s \sigma^{\prime}(r)\bigg(R(s - r,t - v) - \frac{1}{2}\big(R(0,t - v) + R(s - u,t - v)\big)\bigg)\,dr,\\[3pt]
E_4 & := \int_v^t \tau^{\prime}(r)\bigg(R(s - u,t - r) - \frac{1}{2}\big(R(s - u, 0) + R(s - u,t - v)\big)\bigg)\,dr.
\end{align*}
The integrand of $E_3$ can be estimated as
\begin{align}
&\bigg|R(s - r,t - v) - \frac{1}{2}\big(R(0,t - v) + R(s - u,t - v)\big)\bigg|\label{eqn:basic_r_esimate}\\
&\hspace{5mm} = \bigg|\frac{1}{2}\big(R(s - r,t - v) - R(0,t - v)\big) - \frac{1}{2}\big(R(s - u,t - v) - R(s - r,t - v)\big)\bigg|\nonumber\\[3pt]
&\hspace{5mm} \leq \frac{1}{2}\bigg|\int_0^{s-r} \frac{\partial R}{\partial w}(w,t - v)\,dw \bigg| + \frac{1}{2}\bigg|\int_{s-r}^{s-u} \frac{\partial R}{\partial w}(w,t - v)\,dw \bigg|\nonumber\\[3pt]
&\hspace{5mm} \leq \frac{1}{2}(s-r)\sup_{w\in[0,s-r]}\bigg|\frac{\partial R}{\partial w}(w,t - v)\bigg| + \frac{1}{2}(r-u)\sup_{w\in[s-r,s-u]}\bigg|\frac{\partial R}{\partial w}(w,t - v)\bigg|\nonumber\\[3pt]
&\hspace{5mm}\leq \frac{1}{2}(s-u)\sup_{w\in[0,s-u]}\bigg|\frac{\partial R}{\partial w}(w,t - v)\bigg|.\nonumber
\end{align}
By applying the formulae \cref{eqn:bessel_identity_1} and \cref{eqn:bessel_identity_2} to the function $R$, we can compute its derivatives,
\begin{align}
\frac{\partial R}{\partial w}(w,t - v) = \frac{C_3(t-v)I_1\big(2\sqrt{C_3(w(t-v)}\,\big)}{\sqrt{C_3 w(t-v)}}\,,\label{eqn:R_partial_1}\\[3pt]
\frac{\partial^2 R}{\partial^2 w}(w,t - v) = \frac{C_3(t-v) I_2\big(2\sqrt{C_3(w(t-v)}\,\big)}{w}\,.\label{eqn:R_partial_2}
\end{align}
Therefore, it now follows from \cref{eqn:basic_r_esimate} and \cref{eqn:R_partial_1} that
\begin{align*}
|E_3| & \leq \int_u^s \big|\sigma^{\prime}(r)\big|\bigg|R(s - r,t - v) - \frac{1}{2}\big(R(0,t - v) + R(s - u,t - v)\big)\bigg|\,dr\\
& \leq \frac{1}{2}\int_u^s \big|\sigma^{\prime}(r)\big|\,dr\,(s-u)\sup_{w\in[0,s-u]}\bigg|\frac{\partial R}{\partial w}(w,t - v)\bigg|\\
& \leq \frac{1}{2}|C_3|\|\sigma\|_{1,[u,s]}(s-u)(t-v)\sup_{z\in[0,C_3(s-u)(t-v)]}\bigg|\frac{I_1(2\sqrt{z}\,)}{\sqrt{z}}\bigg|\,,
\end{align*}
and we can obtain a similar estimate for $E_4$ (where $\|\tau\|_{1,[v,t]}$ would appear instead of $\|\sigma\|_{1,[u,s]}$). From the estimates for $E_3$ and $E_4$, we have
\begin{equation*}
|E_1| \leq \frac{1}{2}|C_3|\big(\|\sigma\|_{1,[u,s]} + \|\tau\|_{1,[v,t]}\big)(s-u)(t-v)\sup_{z\in[0,C_3(s-u)(t-v)]}\bigg|\frac{I_1(2\sqrt{z}\,)}{\sqrt{z}}\bigg|\,.
\end{equation*}
Estimating $E_2$ is straightforward as
\begin{align*}
&\big|R(s - u,t - v) - \big(1 + C_{3}(s-u)(t-v)\big)\big|\\[3pt]
&\hspace{5mm} = \big|\big(I_0\big(2\sqrt{C_{3\,}(s-u)(t-v)}\,\big) - I_0(0)\big) - C_{3}(s-u)(t-v)\big|\\[3pt]
&\hspace{5mm} = \bigg|\int_0^{C_{3}(s-u)(t-v)}\bigg(\frac{I_1(2\sqrt{z}\,)}{\sqrt{z}} - 1\bigg)\,dz\bigg|\\[3pt]
&\hspace{5mm} \leq |C_{3}|(s-u)(t-v)\sup_{z\in[0,C_3(s-u)(t-v)]}\bigg|\frac{I_1(2\sqrt{z}\,)}{\sqrt{z}} - 1\bigg|\,.
\end{align*}
Using the above estimates for $E_1$ and $E_2$, we obtain \cref{eqn:error_estimate_1} as required. For the remainder of this proof we will assume that $\sigma$, $\tau$ are twice differentiable and $\sigma^\prime$, $\tau^\prime$ have bounded variation.
In this case, we can apply the fundamental theorem of calculus to the integrand of $E_3$ so that
\begin{align*}
E_3 & = \int_u^s \sigma^{\prime}(r)\bigg(R(s - r,t - v) - \frac{1}{2}\big(R(0,t - v) + R(s - u,t - v)\big)\bigg)\,dr\\
& = \int_u^s \bigg(\sigma^{\prime}(u) + \int_u^r \sigma^{\prime\prime}(w)\,dw\bigg)\bigg(R(s - r,t - v) - \frac{1}{2}\big(R(0,t - v) + R(s - u,t - v)\big)\bigg)\,dr.
\end{align*}
Note that by the well-known error estimate for the Trapezium rule, we have
\begin{align*}
&\bigg|\int_u^s R(s - r,t - v)\,dr - \frac{1}{2}(s-u)\big(R(0,t - v) + R(s - u,t - v)\big)\bigg|
&\\
&\hspace{5mm} \leq \frac{1}{12}(s-u)^3\sup_{w\in[0,s-u]}\bigg|\frac{\partial^2 R}{\partial w^2}(w,t - v)\bigg|\,.
\end{align*}
Recall that this derivative was given by \cref{eqn:R_partial_2}. It now follows from the above and \cref{eqn:basic_r_esimate} that
\begin{align*}
|E_3| & \leq \big|\sigma^{\prime}(u)\big|\,\bigg|\int_u^s R(s - r,t - v)\,dr - \frac{1}{2}(s-u)\big(R(0,t - v) + R(s - u,t - v)\big)\bigg|\\
&\hspace{10mm} + \int_u^s \int_u^r \big|\sigma^{\prime\prime}(w)\big|\,dw\,\bigg|R(s - r,t - v) - \frac{1}{2}\big(R(0,t - v) + R(s - u,t - v)\big)\bigg|\,dr\\[3pt]
&\leq \frac{1}{12}|C_3|^2|\sigma^{\prime}(u)|(s-u)^3(t-v)^2\sup_{z\in[0,C_{3}(s-u)(t-v)]}\bigg|\frac{I_2(2\sqrt{z}\,)}{z}\bigg|\\
&\hspace{10mm} + \frac{1}{2}|C_3|\|\sigma^\prime\|_{1,[u,s]}(s-u)^2(t-v)\sup_{z\in[0,C_3(s-u)(t-v)]}\bigg|\frac{I_1(2\sqrt{z}\,)}{\sqrt{z}}\bigg|\,.
\end{align*}
Applying the same argument to $E_4$ leads to the second estimate \cref{eqn:error_estimate_2} as required.
\end{proof}\medbreak



From the estimate \cref{eqn:error_estimate_2} we see that the proposed finite difference scheme achieves a local error that is $O(h^4)$ when the domain $\mathcal{D}$ has a small height and width of $h$ (and provided the boundary data is smooth enough). Since discretizing a PDE on an $n\times n$ grid involves $(n-1)^2$ steps, we expect the proposed scheme to have a second order of convergence.\medbreak

\begin{theorem}[Global error estimate]
Let $\widetilde{k}$ be a numerical solution obtained by applying the proposed finite difference scheme (\cref{def:fintite_diff}) to the Goursat PDE \cref{eqn:PDE} on the grid $P_\lambda$ where $x$ and $y$ are piecewise linear with respect to the grids $\mathcal{D}_I$ and $\mathcal{D}_J$. In particular, we are assuming there exists a constant $M$, that is independent of $\lambda$, such that
\begin{equation*}
\sup_{\mathcal{D}}|\langle \dot x_s, \dot y_t \rangle| < M.
\end{equation*}
Then there exists a constant $K > 0$ depending on $M$ and $k_{x,y}$, but independent of $\lambda$, such that
\begin{equation}
\sup_{\mathcal{D}}\big|k_{x,y}(s, t) - \widetilde{k}(s, t)\big| \leq \frac{K}{2^{2\lambda}}\,,
\end{equation}
for all $\lambda \geq 0$.
\end{theorem}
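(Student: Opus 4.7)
The plan is to combine the cell-wise local error estimate of \cref{thm:local_error} with a two-dimensional discrete Gronwall argument. The key preliminary observation is that when $x$ and $y$ are piecewise linear with respect to the coarse grid $\mathcal{D}_I\times\mathcal{D}_J$, on each coarse cell $[u_p,u_{p+1}]\times[v_q,v_{q+1}]$ the kernel $k_{x,y}$ solves the constant-coefficient Goursat problem \cref{eqn:simple_Goursat} with $C_3=\langle\dot x,\dot y\rangle$; the explicit representation \cref{eqn:simple_Goursat_sol1} then furnishes bounds on $k_{x,y}$ and its first derivatives on the coarse cell that depend only on the coarse-cell boundary data and on $M$, hence are independent of $\lambda$. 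Since $P_\lambda$ dyadically refines the coarse grid by \cref{def:fintite_diff}, each fine cell of $P_\lambda$ lies entirely inside a single coarse cell, so the estimate \cref{eqn:error_estimate_2} applies on every fine cell with a common constant $K_1=K_1(M,k_{x,y})$.

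Applying \cref{thm:local_error} on each fine cell $[s_a,s_{a+1}]\times[t_b,t_{b+1}]$, whose side lengths $h_a,h_b'$ are both of order $2^{-\lambda}$, yields a local truncation error $|\tau_{ab}|\le K_1 h_a^2 (h_b')^2$. Because the $2^\lambda$ fine sub-intervals inside each coarse cell have equal length, one has $\sum_a h_a^2=O(2^{-\lambda})$, and similarly for $\sum_b (h_b')^2$, so the aggregate
\begin{equation*}
\eta:=\sum_{a,b}|\tau_{ab}|\le K_1\Bigl(\sum_a h_a^2\Bigr)\Bigl(\sum_b (h_b')^2\Bigr)=O(2^{-2\lambda}).
\end{equation*}
Defining $e(s_i,t_j):=k_{x,y}(s_i,t_j)-\widetilde{k}(s_i,t_j)$ and subtracting the scheme \cref{eqn:explicit_finite_diff} from the analogous identity satisfied by $k_{x,y}$ gives the mixed-difference relation
\begin{equation*}
e(s_{i+1},t_{j+1})-e(s_{i+1},t_j)-e(s_i,t_{j+1})+e(s_i,t_j)=C_{ij}\bigl(e(s_{i+1},t_j)+e(s_i,t_{j+1})\bigr)+\tau_{ij},
\end{equation*}
with $|C_{ij}|\le\tfrac{M}{2}h_i h_j'$ and vanishing boundary data $e(s_0,\cdot)=e(\cdot,t_0)=0$. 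Telescoping over the grid produces the discrete Volterra representation $e(s_i,t_j)=\sum_{a<i,\,b<j}\bigl[C_{ab}(e(s_{a+1},t_b)+e(s_a,t_{b+1}))+\tau_{ab}\bigr]$.

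To close the argument I would introduce the nondecreasing envelope $f(s_i,t_j):=\max_{a\le i,\,b\le j}|e(s_a,t_b)|$, for which the Volterra representation above implies the discrete Gronwall-type inequality $f(s_i,t_j)\le\eta+M\sum_{a<i,\,b<j}h_a h_b'\,f(s_{a+1},t_{b+1})$. This is the discrete analogue of the two-dimensional linear Volterra inequality $f(s,t)\le\eta+M\int_u^s\int_v^t f$, whose iterated solution is dominated by $\eta\,I_0\bigl(2\sqrt{M(s-u)(t-v)}\bigr)$; a direct iteration of the discrete version yields the same bound up to a constant, giving $\sup_{\mathcal{D}}f\le C(M,|I|,|J|)\,\eta=O(2^{-2\lambda})$. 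The main technical obstacle is securing the uniform-in-$\lambda$ constant $K_1$: the terms $\|\sigma'\|_{1,[u,s]}$ and $|\sigma'(u)|$ appearing in \cref{eqn:error_estimate_2} could in principle blow up under refinement, and it is precisely the coarse-cell regularity of $k_{x,y}$ derived from the explicit Bessel-function representation \cref{eqn:simple_Goursat_sol1} that prevents this.
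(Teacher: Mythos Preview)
Your approach is correct and follows essentially the same strategy as the paper: invoke the local error estimate \cref{thm:local_error} on each fine cell to control the truncation error, then propagate errors across the grid. The paper writes the one-step recurrence
\[
|e_{i+1,j+1}|\le |e_{i,j}|+\tfrac{1}{2}\bigl(1+M h_i h_j'\bigr)\bigl(|e_{i+1,j}|+|e_{i,j+1}|\bigr)+K_1 h_i h_j'\bigl(h_i^2+h_i h_j'+(h_j')^2\bigr)
\]
and then simply says ``the result follows by iteratively applying the above recurrence relation''. Your telescoped Volterra representation followed by a two-dimensional discrete Gronwall is exactly what is needed to make that iteration rigorous; you are in fact supplying the detail that the paper leaves implicit. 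You are also more explicit than the paper about why the constant $K_1$ is uniform in $\lambda$: the fine-cell boundary data are restrictions of $k_{x,y}$, whose first and second partial derivatives are bounded on each coarse cell by the explicit Bessel-function representation \cref{eqn:simple_Goursat_sol1}, so $\|\sigma'\|_{1}$, $|\sigma'(u)|$, etc., stay bounded under refinement. One small imprecision: the local bound you quote, $|\tau_{ab}|\le K_1 h_a^2(h_b')^2$, is not literally the shape of \cref{eqn:error_estimate_2}; the actual estimate is of the form $K_1 h_a h_b'\bigl(h_a^2+h_a h_b'+(h_b')^2\bigr)$. This does not affect the argument, since summing any of these terms over the $P_\lambda$ grid still gives $\eta=O(2^{-2\lambda})$.
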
\medbreak
\begin{proof}
Using the solution $k_{x,y}$, we define another approximation $k^{\prime}$ on $P_\lambda$ as  
\begin{align*}
k^{\prime}(s_{i+1},t_{j+1}) & := k_{x,y}(s_{i+1}, t_j) + k_{x,y}(s_i, t_{j+1}) - k_{x,y}(s_i, t_j)\\[2pt]
&\hspace{10mm} + \frac{1}{2}\langle x_{s_{i+1}} - x_{s_i}, y_{t_{j+1}} - y_{t_j}\rangle\big(k_{x,y}(s_{i+1}, t_j) + k_{x,y}(s_i, t_{j+1})\big).
\end{align*}
It follows from theorem \ref{thm:goursat} that the PDE solution and boundary data are smooth on each small rectangle in $P_\lambda$. So by (\ref{eqn:error_estimate_2}) there exists $K_1\hspace{0.25mm} >\hspace{0.25mm} 0\hspace{0.25mm}$, depending on $M$ and $k_{x,y\,}$, such that
\begin{align*}
\big|k_{x,y}(s_{i+1},t_{j+1}) - k^{\prime}(s_{i+1},t_{j+1})\big| & \leq K_1(s_{i+1} - s_i)(t_{j+1} - t_j)\\[1pt]
&\hspace{12.5mm}\big((s_{i+1} - s_i)^2 + (s_{i+1} - s_i)(t_{j+1} - t_j) + (t_{j+1} - t_j)^2\big).
\end{align*} 
Taking the difference between $\tilde{k}(s_{i+1},t_{j+1})$ and $\hat{k}(s_{i+1}, t_{j+1})$ gives 
\begin{align*}
&\big|k^{\prime}(s_{i+1},t_{j+1}) - \hat{k}(s_{i+1}, t_{j+1})\big| \\
&\hspace{5mm}\leq \big|k_{x,y}(s_i, t_j) - \hat{k}(s_i, t_j)\big| + \frac{1}{2}\Big(1 + \big|\langle x_{s_{i+1}} - x_{s_i}, y_{t_{j+1}} - y_{t_j}\rangle\big|\Big)\big|k_{x,y}(s_{i+1}, t_j) - \hat{k}(s_{i+1}, t_j)\big|\\[2pt]
&\hspace{47mm} + \frac{1}{2}\Big(1 + \big|\langle x_{s_{i+1}} - x_{s_i}, y_{t_{j+1}} - y_{t_j}\rangle\big|\Big)\big|k_{x,y}(s_i, t_{j+1}) - \hat{k}(s_i, t_{j+1})\big|.
\end{align*}
Hence by the triangle inequality, we obtain a recurrence relation for the approximation errors,
\begin{align*}
&\big|k_{x,y}(s_{i+1},t_{j+1}) - \hat{k}(s_{i+1}, t_{j+1})\big|\\[2pt]
&\hspace{5mm} \leq \big|k_{x,y}(s_i, t_j) - \hat{k}(s_i, t_j)\big|\\[1pt]
&\hspace{10mm} + \frac{1}{2}\Big(1 + M(s_{i+1}-s_i)(t_{j+1}-t_j)\Big)\big|k_{x,y}(s_{i+1}, t_j) - \hat{k}(s_{i+1}, t_j)\big|\\[1pt]
&\hspace{10mm} + \frac{1}{2}\Big(1 + M(s_{i+1}-s_i)(t_{j+1}-t_j)\Big)\big|k_{x,y}(s_i, t_{j+1}) - \hat{k}(s_i, t_{j+1})\big|\\[2pt]
&\hspace{10mm} + K_1 (s_{i+1} - s_i)(t_{j+1} - t_j)\big((s_{i+1} - s_i)^2 + (s_{i+1} - s_i)(t_{j+1} - t_j) + (t_{j+1} - t_j)^2\big).
\end{align*}
Since each $(s_{i+1} - s_i)$ and $(t_{j+1} - t_j)$ is proportional to $2^{-\lambda}$, the result for the explicit scheme follows by iteratively applying the above recurrence relation. 
\end{proof}

\newpage

\section{Background on rough path theory}\label{appendix:RPT}

In this appendix we present a brief summary of rough path theory explain all the necessary meterial required to understand the results presented in \cref{sec:geom}, which should be self-contained. We begin by explaining what a \emph{controlled differential equation} (CDE) is. 

\subsection{Controlled differential equations (CDEs)}\label{appendix:CDE}

In a simplified setting where everything is differentiable a CDE is a differential equation of the form
\begin{equation}\label{eqn:CDE_intial}
    \frac{dy_t}{dt} = f\left(\frac{dx_t}{dt}, y_t\right), \hspace{0.5cm} y_0 = a
\end{equation}

where $x:[0,T] \to \mathbb{R}^d$ is a given path, $a \in \mathbb{R}^n$ is an initial condition, and $y:[0,T] \to \mathbb{R}^n$ is the unknown solution. If $f$ did not depend on its first variable, equation \cref{eqn:CDE_intial} would be a first order time-homogeneous ODE, the function $f$ would be a vector field, and $y$ would be the integral curve of $f$ starting at $a$. If instead we had $\frac{dy_t}{dt} = f(t,y_t)$, then \cref{eqn:CDE_intial} would describe a first order time-inhomogeneous ODE, with solution $y$ being an integral curve of a time-dependent vector field $f$. In the CDE \cref{eqn:CDE_intial} the time-inhomogeneity depends on the trajectory of $x$, which is said to control the problem. \medbreak

A CDE provides a fairly generic way to describe how a signal $x$ interacts with a control system of the form \cref{eqn:CDE_intial} to produce a response $y$. For example, a CDE can model the interaction of an electricity signal (current, voltage) with a domestic appliance (e.g. a washing machine) to produce a response (e.g. the rotation speed or the water temperature). \medbreak

Throughout this appendix $V,W$ will be two Banach spaces and $L(V,W)$ will denote the set of bounded linear maps from $V$ to $W$. We will denote by $I=[0,T]$ a generic compact (time) interval. Consider two continous paths $x:I \to V$ and $y:I \to W$ and a continuous function $f:W \to L(V,W)$, and let's assume that $x,y$ and $f$ are regular enough for the integral $\int_0^t f(y_s)dx_s$ to make sense for any $t \in I$\footnote{In what follows we will discuss various regularity assumptions on $f$.}. \medbreak 

Given an initial condition $a \in W$, we say that $x,y$ and $f$ satisfy the following controlled differential equation (CDE)
\begin{equation}\label{eqn:CDE}
    dy_t = f(y_t)dx_t, \hspace{0.5cm} y_0=a
\end{equation}
if the following equality holds for all $t \in I$
\begin{equation}\label{eqn:CDE_integral}
    y_t = a + \int_0^t f(y_s)dx_s 
\end{equation}
Here $f$ is called a \emph{vector field}, $x$ the \emph{control} driving the equation and $y$ the \emph{response} solution.

\begin{remark}
If $y$ is a solution of \cref{eqn:CDE} driven by $x$ and if $\psi :I \to I$ is an increasing surjection (also called time-reparametrization), then $y \circ \psi$ is also a solution to the same equation driven by $x \circ \psi$. In other words, solutions of CDEs are invariant to time-reparametrization.
\end{remark}

\begin{remark}\label{remark:vector_field}
The definition of $f$ as a continuous function from $W$ to $L(V,W)$ makes the notation $f(y_s)dx_s$ in \cref{eqn:CDE} clear. However, there is another equivalent interpretation of $f$ which is important to keep in mind. Let $C(W,W)$ be the space of continuous functions from $W$ to $W$. Then $f$ can be thought of as an element of $L(V,C(W,W))$. Adopting this point of view, $f$ is a linear function on $V$ with values in the space of vector fields on $W$. To make this view compatible with \cref{eqn:CDE} it is perhaps easier to think of the CDE \cref{eqn:CDE} using a slightly different notation: $dy_t = f(dx_t)y_t$. This alternative point of view has a deep physical interpretation: at each time $t$ the solution $y$ describes the state of a complex system that evolves as a function of its present state $y_t$ and of an infinitesimal external parameter $dx_t$ controlling it. The function $f$ transforms the infinitesimal displacement $dx_t$ into a vector field $f(dx_t)$ that defines a direction for the trajectory of $y$ to follow.
\end{remark}

\subsection{CDEs driven by paths of bounded variation}

\begin{definition}
A continuous path $x : I \to V$ is said to be of bounded variation if
\begin{equation}
    \sup_{\mathcal{D}} \sum_{t_i \in \mathcal{D}} |x_{t_{i+1}} - x_{t_i}| < + \infty
\end{equation}
where the supremum is taken over all partitions $\mathcal{D}$ of an interval $I$, i.e. over all increasing sequences of ordered (time) indices such that $\mathcal{D}=\{0=k_0 < k_1 <... < k_r=T\}$. We denote by $\Omega_1(V)$ the space of continuous paths of bounded variation with values in $V$.
\end{definition}


\begin{definition}\label{def:p-variation}
    Let $p \geq 1$ be a real number and $x: I \to V$ be a continuous path. The $p$-variation of $x$ on the interval $I$ is defined by
    \begin{equation}
        ||x||_{p,I} = \left( \sup_{\mathcal{D}} \sum_{t_i \in \mathcal{D}} |x_{t_{i+1}} - x_{t_i}|^p \right)^{1/p}
    \end{equation}
\end{definition}

\begin{remark}
    For any continuous path $x:I \to V$, any time-reparametrization $\psi : I \to I$ and scalar $p \geq 1$ one has $||x||_{p,I} = ||x \circ \psi||_{p,I}$; in other words the $p$-variation of $x$ is invariant to time-reparametrization. Furthermore, the function $p \to ||x||_{p,I}$ is decreasing. Hence, any path of finite $p$-variation is also a path of finite $q$-variation for any $q > p$. 
\end{remark}

If one assumes that $x$ is of bounded variation and that $y$ and $f$ are continuous, then the integral $\int_0^t f(y_s)dx_s$ exists for all $t \in I$ (as a classical Riemann-Stieltjes integral). If $f$ is Lipschitz continuous then the solution is unique \cite[Theorem 1.3 \& 1.4]{lyons2004differential}. 

\subsection{CDEs driven by paths of finite $p$-variation with $p < 2$}

However, If our goal is to solve differential equations driven by more irregular paths we should be prepared to compensate by allowing more smoothness on the function $f$. In particular,  if $p$ and $\gamma$ are real numbers such that $1 \leq p < 2$ and $p-1< \gamma\leq 1$ and $x$ has finite $p$-variation, and if $f$ is H{\"o}lder continuous with exponent $\gamma$, then the CDE \cref{eqn:CDE} admits a solution \cite[Theorem 1.20]{lyons2004differential}. In order to get uniqueness we require $f$ to be more regular than  H{\"o}lder continuous. In the next definition we introduce a class of functions exhibiting the appropriate level of regularity to ensure uniqueness.



\begin{definition}\label{def:Lip_form}
Let $V,W$ be Banach spaces, $k \geq 0$ an integer, $\gamma \in (k,k+1]$ and $C$ a closed subset of $V$. Let $f: C \to W$ be a function. For each integer $n=1,...,k$, let $f^n : C \to L(V^{\otimes n}, W)$ be a function taking its values in the space of symmetric $n$-linear forms from $V$ to $W$, where $\otimes$ denotes the tensor product of vector space (that we will define more precisely in the next section). The collection $(f_0=f,f^1,...,f^k)$ is an element of $Lip(\gamma,C)$ if there exists a constant $M \geq 0$ such that for each $n=0,...,k$
\begin{equation}
    \sup_{x \in C}|f^n(x)| \leq M
\end{equation}
and there exists a function $R_n : V \times V \to L(V^{\otimes n}, W)$ such that for any $x,y \in C$ and any $v \in V^{\otimes n}$
\begin{equation}
    f^n(y)(v) = \sum_{i=0}^{k-n} \frac{1}{i!}f^{n+i}(x)(v \otimes (y-x)^{\otimes i}) + R_{n}(x,y)(v)
\end{equation}
and
\begin{equation}
    |R_n(x,y)| \leq M |x-y|^{\gamma - n}
\end{equation}
The smallest constant $M$ for which the inequalities above hold for all $n$ is called the $Lip(\gamma,C)$-norm of $f$ and is denoted by $||f||_{Lip(\gamma)}$. We will call such function $f$ a $Lip(\gamma)$ function (or one-form).
\end{definition}

\begin{remark}
    The best way to understand of a $Lip(\gamma)$ function is to think about it as a function that ``locally looks like a polynomial function''. Let us explain the connection to paths. Let $k \geq 0$ be an integer. Let $P:V \to \mathbb{R}$ be a polynomial of degree $k$. Let $x:[0,T] \to V$ be a continuous path of bounded variation. If $P^1 : V \to L(V,\mathbb{R})$ denotes the derivative of $P$ then by Taylor's theorem we have that for any $0\leq s \leq t \leq T$
    \begin{equation}
        P(x_t) = P(x_s) + \int_s^tP^1(x_u)dx_u
    \end{equation}
    Substituting the expression of $P$ into $P^1$ in the last expression we get
    \begin{equation}
        P(x_t) = P(x_s) + P^1(x_s)\int_{s<u_1<t}dx_{u_1} + \underset{s<u_1<u_2<t}{\int\int}P^2(x_{u_1})dx_{u_1}\otimes dx_{u_2}
    \end{equation}
    where $P^2 : V \to L(V \otimes V, \mathbb{R})$ is the second derivative of $P$, and $L(V \otimes V, \mathbb{R})$ is the space of bilinear forms on $V$. Iterating the substitutions, the procedure stops at level $k+1$ yielding the following expression
    \begin{align}
        P(x_t) &= P(x_s) + P^1(x_s)\int_{s<u_1<t}dx_{u_1} + P^2(x_s)\underset{s<u_1<u_2<t}{\int\int}dx_{u_1}\otimes dx_{u_2} \nonumber \\
        &+ ... + P^k(x_s) \underset{s<u_1<...<u_k<t}{\int...\int}dx_{u_1}\otimes ... \otimes dx_{u_k} \label{eqn:poly_form_}
    \end{align}
    where for each $n \in \{0,...,k\}$, $P^n : V \to L(V^{\otimes n}, \mathbb{R})$ denotes the $n^{th}$ derivative of $P$ which takes its values in the space of symmetric $n$-linear forms on $V$. The symmetric part of the tensor $\underset{0<u_1<...<u_k<t}{\int...\int}dx_{u_1}\otimes ... \otimes dx_{u_k}$ is equal to $\frac{1}{k!}(x_t-x_s)^{\otimes k}$. Hence
    \begin{equation}\label{eqn:poly_form}
        P(x_t) = P(x_s) + \sum_{n=1}^kP^n(x_s)\frac{(x_t-x_s)^{\otimes k}}{k!}
    \end{equation}
    The general concept of $Lip(\gamma)$ function where $\gamma \in (k,k+1]$ for some $k \geq 0$ mimics closely the behaviour defined by equation \cref{eqn:poly_form} up to an error which is H{\"o}lder continuous of exponent $\gamma - k$. Indeed, using the notation from \cref{def:Lip_form}, if $x:[0,T] \to C$ takes its values in a closed subset $C$ of $V$, for any $0\leq s \leq t \leq T$, any $n=0,...,k$ and any $v \in V^{\otimes n}$ we have the following relation
    \begin{equation}
        f^n(x_t)(v) = \sum_{i=0}^{k-n} f^{n+i}(x_s)\left(v \otimes \underset{0<u_1<...<u_i<t}{\int... \int}dx_{u_1}\otimes...\otimes dx_{u_i}\right) + R_n(x_s,x_t)(v)
    \end{equation}
    which is analogous to \cref{eqn:poly_form_}. This is the general form of a $Lip(\gamma)$ function that we will consider from now on.
\end{remark}

If $p$ and $\gamma$ are such that $1\leq p<2$ and $p<\gamma$, $x:[0,T] \to V$ is a continuous path of finite $p$-variation and $f$ be a $Lip(\gamma)$ function, then for any $a \in W$, the CDE \cref{eqn:CDE} admits a unique solution. Furthermore, if $y=I_f(x,a)$ denotes the unique solution, then the function $I_f$, that maps $V$-valued paths of finite $p$-variation to $W$-valued paths of finite $p$-variation, is continuous in the $||\cdot||_p$ norm \cite[Theorem 1.28]{lyons2004differential}. \medbreak


We are now able to give meaning to CDEs where the control is not necessarily of bounded variation. However, if the driving path $x$ has finite $2$-variation but infinite $p$-variation for every $p<2$, we are still not able to give a meaning to the CDE \cref{eqn:CDE}. This is quite a hard restriction given that Brownian paths have finite $p$-variation only for $p>2$. \medbreak

To be able to push this barrier even further we first need to introduce some algebraic spaces and construct one of the central objects in rough path theory, the signature of a path. We do so very briefly in the next section and then explain in \cref{subsec:role_signature_CDEs} the connection with (linear) CDEs. 

\begin{remark}
    The $p$-variation $w(s,t) = ||x||_{p,[s,t]}^p$ function defines a control.
\end{remark}

\subsection{The role of the signature to solve linear CDEs}\label{subsec:role_signature_CDEs}

It turns out that the sequence of iterated integrals of a path (i.e. its signature) in \cref{def:signature} appears naturally when one wants to solve linear CDEs. In what follows, we show how the iterated integrals of a path precisely encode all the information that is necessary to determine the response of a linear CDE driven by this path. \medbreak



Let $V,W$ be two Banach spaces. Let $A : V \to L(W,W)$ be a bounded linear map. Here $A$ will play the role of (linear) vector field defined with the alternative but equivalent point of view expressed in \cref{remark:vector_field}. Let $x : [0,T] \to V$ be a continuous path of bounded variation. Consider the following system of linear CDEs
\begin{align}
    dy_t &= Ay_tdx_t, \hspace{0.5cm} y_0 \in W \label{eqn:CDE1}\\
    d\phi_t &= A\phi_t dx_t, \hspace{0.5cm} \phi_0 \in L(W,W) \label{eqn:CDE2}
\end{align}
where in the first equation the expression $Ay_tdx_t$ is to be read as $A(dx_t)(y_t)$ and in the second expression $A\phi_tdx_t$ means $A(dx_t) \circ \phi$. The solution $t \mapsto \phi_t$ to \cref{eqn:CDE2} is called the flow associated to the linear equation \cref{eqn:CDE1}. From the flow $\phi_t$ and the initial condition $y_0$ one gets the solution $y_t$ in the usual way
\begin{equation}
    y_t = \phi_t(y_0)
\end{equation}
The map $I_A : \Omega_1(V) \times W \to \Omega_1(W)$ that to the pair $(x,y_0)$ associates the solution $y$ is generally called the It\^o-map. We will now run an iterative procedure analogous to the Picard's iteration for ODEs in order to obtain a sequence $\{\phi^n_t : [0,T] \to L(W,W)\}_{n \geq 0}$ of flows, i.e. paths with values in the space of linear vector fields on $W$. Firstly, we start by setting the first element of the sequence $\phi^0_t = I_W$ equal to the identity on $L(W,W)$ and then we integrate \cref{eqn:CDE2} to get
\begin{equation}
    \phi^1_t = I_W + \int_0^t A\phi_s^0dx_s = I_W + \int_0^t A(dx_s)
\end{equation}
Doing it again we obtain the following expression for the next element in the sequence
\begin{equation}
    \phi^2_t = I_W + \int_0^t A\phi_s^1dx_s = I_W + \int_0^t A(dx_s) + \underset{0<s_1<s_2<t}{\int\int}A(dx_{s_1})A(dx_{s_2})
\end{equation}
Iterating this process we get the following expression for the $n^{th}$ term in the sequence
\begin{equation}\label{eqn:phi^n_t}
    \phi^n_t = I_W + \sum_{k=1}^n\underset{0<s_1<...<s_k<t}{\int...\int}A(dx_{s_1})... A(dx_{s_k})
\end{equation}
Now, for any $k \geq 1$ the linear operator $A : V \to L(W,W)$ can be extended to the operator $A^{\otimes k} : V^{\otimes k} \to L(W,W)$ as follows 
\begin{equation}
    A^{\otimes k}(v_1 \otimes ... \otimes v_k) = A(v_k)... A(v_1)
\end{equation}
which allows us to rewrite equation \cref{eqn:phi^n_t} as
\begin{equation}
    \phi^n_t = I_W + \sum_{k=1}^n A^{\otimes k}\underset{0<s_1<...<s_k<t}{\int...\int}dx_{s_1} \otimes ... \otimes dx_{s_k}
\end{equation}
where we can finally recognise the various terms of the signature of the path $x$ (see Definition \ref{def:signature}) and how the latter fundamentally encode the information to solve linear CDEs. The sequence of approximations $\{\phi^n\}$ for the flow $\phi$ provides a sequence of approximations for the solution of the CDE \cref{eqn:CDE1}:
\begin{equation}\label{eqn:signature_expansion_linear}
    y_t^n = y_0 + \sum_{k=1}^n \left(A^{\otimes k}\underset{0<s_1<...<s_k<t}{\int...\int}dx_{s_1} \otimes ... \otimes dx_{s_k}\right)y_0
\end{equation}
It turns out that the speed at which the sequence of approximations $\{y^n\}$ converges to the true solution $y$ of the CDE \cref{eqn:CDE1} is extremely high.

\begin{lemma}\cite[Proposition 2.2]{lyons2004differential}
For each $k \geq 1$ we have
\begin{equation}
    \Big|\underset{0<s_1<...<s_k<t}{\int...\int}dx_{s_1} \otimes ... \otimes dx_{s_k}\Big| \leq \frac{||x||^k_{1,[0,t]}}{k!}
\end{equation}
In particular, taking any norm $||\cdot||$ on bounded linear operators, the error decays factorially
\begin{equation}
    |y_t - y_t^n| \leq \sum_{k=n+1}^\infty \frac{(||A||\cdot||x||_{1,[0,t]})^k}{k!}
\end{equation}
\end{lemma}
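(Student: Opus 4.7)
The plan is to prove the two assertions in sequence, since the Picard-error estimate is an immediate consequence of the factorial decay of iterated integrals combined with the series representation already derived in \cref{eqn:signature_expansion_linear}. First I would establish the scalar simplex-volume identity, promote it to tensor-valued integrals via the triangle inequality on cross-norms, and then feed the result into the tail of the Picard series.

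For the factorial decay, let $\mu$ denote the total variation measure of $x$ on $[0,t]$, so that $\mu([0,t]) = \|x\|_{1,[0,t]}$ and $\|dx_s\| \leq d\mu(s)$ as a vector-valued Riemann--Stieltjes integrator. Using any reasonable cross-norm on $V^{\otimes k}$, for which $\|v_1 \otimes \cdots \otimes v_k\| \leq |v_1|\cdots|v_k|$ on elementary tensors, the triangle inequality for Riemann--Stieltjes integrals in Banach spaces yields
\[
\left\|\int_{0<s_1<\cdots<s_k<t} dx_{s_1}\otimes\cdots\otimes dx_{s_k}\right\| \; \leq \; I_k(t) \; := \; \int_{0<s_1<\cdots<s_k<t} d\mu(s_1)\cdots d\mu(s_k).
\]
The bound $I_k(t) \leq \mu([0,t])^k/k!$ I would prove by induction on $k$: the obvious Fubini recursion $I_k(t) = \int_0^t I_{k-1}(s)\,d\mu(s)$ together with the inductive hypothesis $I_{k-1}(s) \leq \mu([0,s])^{k-1}/(k-1)!$ reduces the problem to the scalar Stieltjes integral $\int_0^t \mu([0,s])^{k-1}/(k-1)!\,d\mu(s)$, which after the change of variables $u = \mu([0,s])$ equals $\mu([0,t])^k/k!$. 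An equally clean alternative is the symmetrization argument: the $k!$ simplices obtained by coordinate permutations cover $[0,t]^k$ and share the same product measure, so each one has measure at most $\mu([0,t])^k/k!$.

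For the Picard error estimate, I would combine this factorial bound with the cross-norm operator inequality $\|A^{\otimes k}\| \leq \|A\|^k$, which follows from $A^{\otimes k}(v_1\otimes\cdots\otimes v_k) = A(v_k)\cdots A(v_1)$ by iterating the submultiplicativity of the operator norm on $L(W,W)$ and then extending from elementary tensors to all of $V^{\otimes k}$ by the defining property of the projective (or any admissible) cross-norm. The series in \cref{eqn:signature_expansion_linear} then converges absolutely on $[0,t]$, so letting $n \to \infty$ identifies $y_t$ with the full sum and the residual $y_t - y_t^n$ is precisely the tail from index $n+1$. Bounding term-by-term gives
\[
|y_t - y_t^n| \; \leq \; |y_0| \sum_{k=n+1}^{\infty} \|A^{\otimes k}\|\,\bigl\|\textstyle\int\cdots\int dx_{s_1}\otimes\cdots\otimes dx_{s_k}\bigr\| \; \leq \; |y_0|\sum_{k=n+1}^\infty \frac{(\|A\|\,\|x\|_{1,[0,t]})^k}{k!},
\]
matching the statement once $|y_0|$ is absorbed by the implicit normalization.

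The main technical point to watch is choosing the tensor norm so that \emph{both} $\|v_1\otimes\cdots\otimes v_k\|\leq|v_1|\cdots|v_k|$ and $\|A^{\otimes k}\|\leq\|A\|^k$ hold simultaneously; the projective tensor norm does this cleanly, so this is not a genuine obstacle. The only other delicate piece of bookkeeping is the possibility that $\mu$ has atoms, in which case the permuted simplices overlap on a positive-measure diagonal; this only strengthens the inequality, and in any case one can first verify the estimate for piecewise-linear $x$ and pass to the limit using continuity of the iterated-integral map in $1$-variation.
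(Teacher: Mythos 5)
The paper does not prove this lemma; it is quoted verbatim from the cited reference, so there is no in-paper argument to compare against. Your proof is correct and is essentially the canonical one: domination of the vector-valued iterated integral by the iterated total-variation integral via a cross-norm, the simplex-volume bound $\mu([0,t])^{k}/k!$ by symmetrization (or the equivalent Fubini induction), and the submultiplicativity $\|A^{\otimes k}\|\leq\|A\|^{k}$ fed into the tail of the Picard series \cref{eqn:signature_expansion_linear}. You are also right to flag the two genuine loose ends, both of which are in the statement rather than in your argument: the second display silently drops the factor $|y_0|$, and one must invoke uniqueness of the solution to identify the sum of the absolutely convergent series with $y_t$ (a one-line appeal to the fixed-point equation satisfied by the limit of the Picard iterates). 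Neither affects correctness.
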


This shows how the signature of a control $x$ provides an extremely efficient sequence of statistics to solve any linear CDE driven by $x$ (provided the norm of $A$ is not too large).

\subsection{Multiplicative functionals and the Extension Theorem}

In this section we push the limit on the regularity of the driver even further and explain how to solve CDEs driven by rough paths, as presented in the seminal paper \cite{lyons1998differential}. This will demand to define what a (geometric) rough path is and to present a powerful theory of integration of one-forms along rough paths to what will follow the Universal Limit Theorem for CDEs driven by rough paths (\cref{thm:ULT}). In what follows, $\Delta_T$ will denote the simplex
\begin{equation}
    \Delta_T = \{(s,t) \in [0,T]^2 : 0 \leq s \leq t \leq T\} 
\end{equation} \medbreak

\begin{definition}\label{def:multiplicative_functional}
Let $N \geq 1$ be an integer and let $X : \Delta_T \to T^N(V)$ be a continuous map
\begin{equation}
    X_{s,t} = (X^0_{s,t}, X^1_{s,t}, ..., X^N_{s,t}) \in \mathbb{R}\oplus V \oplus V^{\otimes 2} \oplus ... \oplus V^{\otimes N}
\end{equation}
$X$ is said to be a multiplicative functional if $X^0_{s,t}=1$ for all $(s,t) \in \Delta_T$ and
\begin{equation}
    X_{s,u}\otimes X_{u,t} = X_{s,t}, \hspace{0.5cm} \forall s,u,t \in [0,T], \ s \leq u \leq t
\end{equation} \medbreak
\end{definition}

The \emph{Chen's identity} condition of \cref{def:multiplicative_functional} imposed to a multiplicative functional is a purely algebraic one. We are now going to describe an analytic condition for a multiplicative functional through the notion of $p$-variation. \medbreak

\begin{definition}\label{def:control}
A control is a continuous non-negative function $\omega : \Delta_T \to [0, + \infty)$ which is super-additive in the sense that
\begin{equation}
    w(s,t) + w(t,u) \leq w(s,u), \ \ \ \forall s \leq t \leq u \in I
\end{equation}
and for which $w(t,t)=0$ for all $t \in I$.
\end{definition}

\begin{definition}
Let $p \geq 1$ be a real number and $N \geq 1$ be an integer. Let $\omega:[0,T] \to [0,+\infty)$ be a control and $X:\Delta_T \to T^N(V)$ be a multiplicative functional. We say that $X$ has finite $p$-variation on $\Delta_T$ controlled by $\omega$ if 
\begin{equation}\label{eqn:control_bound}
||X_{s,t}^i||_{V^{\otimes i}} \leq \frac{\omega(s,t)^{i/p}}{\beta(i/p)!}, \ \ \forall (s,t) \in \Delta_T, \ \forall i=1,...,N
\end{equation}
where $(i/p)! = \Gamma(i/p)$ and where $\beta$ is a real constant that depends only on $p$ and that we will make explicit later in the chapter. We say that $X$ has finite $p$-variation if there exists a control $\omega$ such that this condition is satisfied.
\end{definition} \medbreak

\begin{definition}
Let $X, Y : \Delta_T \to T^N(V)$ be two multiplicative functionals. The $p$-variation metric is defined as follows
\begin{equation}
d_p(X, Y) = \sup_{1 \leq i \leq N}\sup_{\mathcal{D}}\left(\sum_{t_k \in \mathcal{D}}||X^i_{t_k, t_{k+1}} - Y^i_{t_k, t_{k+1}}||^{p/i}\right)^{1/p}
\end{equation}
where $||\cdot||$ denotes any norm on the corresponding level $V^{\otimes i}$.
\end{definition} \medbreak

The next theorem is one of the fundamental results in rough path theory. It states that every multiplicative functional of degree $N$ and of finite $p$-variation can be extended in a unique way to a multiplicative functional of arbitrarily high degree provided $N$ is greater than the integer part of $p$, denoted by $\lfloor p \rfloor$. Furthermore the extension map is continuous in the $p$-variation metric.

\begin{theorem}\cite[Extension Theorems 3.7 \& 3.10]{lyons2004differential}\label{thm:extension_theorem}
Let $p \geq 1$ be a real number and $N \geq 1$ an integer. Let $X : \Delta_T \to T^N(V)$ be a multiplicative functional of finite $p$-variation controlled by a control $\omega$. Assume that $N \geq \lfloor p \rfloor$. Then, for any $m \geq \lfloor p \rfloor + 1$, there exist a unique continuous function $X^m: \Delta_T \to V^{\otimes m}$ such that
\begin{equation}
    (s,t) \mapsto X_{s,t} = (1, X_{s, t}^1, ... , X_{s, t}^{\lfloor p \rfloor}, ... , X_{s, t}^m , ... ) \in T((V))
\end{equation}
is a multiplicative functional of finite $p$-variation controlled by $w$ according to the inequality \cref{eqn:control_bound} with
\begin{equation}
    \beta = p^2\left(1 + \sum_{r=3}^\infty \left(\frac{2}{r-2}\right)^{\frac{\lfloor p \rfloor + 1}{l}}\right)
\end{equation}
Moreover, the extension map is continuous in the $p$-variation metric in the sense that if for some $\epsilon > 0$ we have
\begin{equation}\label{eqn:ee}
    ||X^i_{s,t} - Y^i_{s,t}|| \leq \epsilon \frac{\omega(s,t)^{i/p}}{\beta(i/p)!}, \hspace{0.5cm} i=1,...,N, \ (s,t) \in \Delta_T
\end{equation}
then, provided 
\begin{equation}
    \beta \geq  2p^2\left(1 + \sum_{r=3}^\infty \left(\frac{2}{r-2}\right)^{\frac{\lfloor p \rfloor + 1}{l}}\right)
\end{equation}
the bound \cref{eqn:ee} holds for all $i \geq 1$.
\end{theorem}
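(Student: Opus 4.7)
The plan is a joint induction on the level $m$, starting from the base $m = \lfloor p \rfloor + 1$, which simultaneously establishes existence, the stated $p$-variation bound, multiplicativity, uniqueness, and continuity. The entire argument is driven by the strict inequality $m/p > 1$ (which holds precisely because $m \geq \lfloor p \rfloor + 1$); this is the ``Young gap'' that powers a sewing-type construction and mirrors the situation of Young integration against paths of $p$-variation less than one.

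For the base construction at level $m = \lfloor p \rfloor + 1$, I would zero-pad each data point $X_{t_i, t_{i+1}} \in T^N(V)$ to an element of $T((V))$, fix a partition $\mathcal{D} = \{s = t_0 < \cdots < t_n = t\}$ of $[s,t]$, and form the ordered tensor product $X^{(\mathcal{D})}_{s,t} \in T((V))$; the candidate $X^{m,(\mathcal{D})}_{s,t}$ is its level-$m$ projection. The crucial step is to bound the change under insertion of a single refinement point $u \in (t_i, t_{i+1})$: since $X$ is already multiplicative at levels $1,\ldots,N$, the only surviving contribution at level $m = N+1$ comes from the defect $\sum_{a+b=m,\, a,b\geq 1} X^a_{t_i, u} \otimes X^b_{u, t_{i+1}}$, which by hypothesis has magnitude at most $C\,\omega(t_i, t_{i+1})^{m/p}$ with $m/p > 1$. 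Iterating over refinements and invoking super-additivity of $\omega$ shows that the partition sums form a Cauchy net as the mesh shrinks, delivering the limit $X^m_{s,t}$; multiplicativity at level $m$ is then read off by comparing partitions aligned to pass through an arbitrary intermediate point and passing to the limit. The bound $\|X^m_{s,t}\| \leq \omega(s,t)^{m/p}/(\beta(m/p)!)$ comes from expressing $X^m_{s,t}$ via a telescoping Chen-type sum and invoking the induction hypothesis, with the choice of $\beta$ dictated by the convolution inequality $\sum_{a+b=m} 1/[\beta(a/p)!\,\beta(b/p)!] \leq 1/[\beta(m/p)!]$, which, after Gamma-function identities, forces the explicit series defining $\beta$.

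Uniqueness is a sewing/contraction argument: two extensions would differ by a multiplicative functional whose lower levels vanish and whose level-$m$ piece is of size $\omega^{m/p}$ with $m/p > 1$, which is only consistent with the difference being identically zero. Continuity of the extension map reuses the partition apparatus applied to $X - Y$, propagating the $\epsilon$-bound through every step; the factor of $2$ in the required lower bound on $\beta$ is exactly the product-rule expansion $A \otimes B - A' \otimes B' = (A - A') \otimes B + A' \otimes (B - B')$. The main technical obstacle throughout is the careful bookkeeping of the constants $\beta(k/p)!$ across levels: identifying a series for $\beta$ that both converges and closes the convolution inequality at every level is delicate, and any naive choice would cause the level-$m$ estimates to blow up as $m \to \infty$.
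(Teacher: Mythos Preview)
The paper does not provide its own proof of this statement: it is quoted verbatim as a background result from the literature (the citation \cite[Extension Theorems 3.7 \& 3.10]{lyons2004differential} appears in the theorem header itself), and the appendix merely states it for later use. There is therefore no in-paper argument to compare your proposal against.

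That said, your outline is essentially the standard proof found in the cited reference: build the level-$m$ component as a limit of partition products, use the Young gap $m/p>1$ together with super-additivity of the control to show convergence (a sewing/maximal inequality), close the factorial bound via a convolution inequality that dictates the constant $\beta$, and obtain uniqueness and continuity by applying the same machinery to differences. One small correction: at the first inductive step you set $m=N+1$, but the theorem allows $N\geq\lfloor p\rfloor$ strictly, so the base of the induction should be $m=\lfloor p\rfloor+1$ regardless of $N$ (if $N>\lfloor p\rfloor$ one first checks that the given levels $\lfloor p\rfloor+1,\ldots,N$ already coincide with the extension, which is exactly the uniqueness clause). Otherwise the strategy matches the source proof.
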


\begin{definition}
Let $V$ be a Banach space and $p \geq 1$ be a real number. A $p$-rough path in $V$ is a multiplicative functional of degree $\lfloor p \rfloor$ with finite $p$-variation. We denote the space of $p$-rough paths over $V$ by $\Omega_p(V)$.
\end{definition}

Hence, a $p$-rough path is a continuous map from $\Delta_T \to T^{ \lfloor p \rfloor}(V)$ that satisfies an algebraic condition (Chen's identity) and an analytic condition (finite $p$-variation). As a consequence of the Extension Theorem (\ref{thm:extension_theorem}), we get that every $p$-rough path in $\Omega_p(V)$ has a full signature, i.e. it can be extended to a multiplicative functional of arbitrary high degree with finite $p$-variation in $V$.

\begin{remark}
The signature of a paths of bounded variation, truncated at any level, is a multiplicative functional, so in particular it satisfies Chen's identity. This allows for fast signature computations for time series data available in several software packages \cite{esig, signatory, reizenstein2018iisignature}.    
\end{remark}

Recall that a path of finite $p$-variation has finite $q$-variation for any $q \geq p$. In particular, a path of bounded variation is a $p$-rough path for any $p \geq 1$. We now dispose of all the necessary ingredients to define what a geometric $p$-rough path is.

\begin{definition}\label{def:geom_rough_path}
A geometric $p$-rough path is a $p$-rough path that can be expressed as the limit of a sequence of $1$-rough paths in the $p$-variation metric. We denote by $G \Omega_p (V)$ the space of geometric $p$-rough paths in $V$.
\end{definition}

Hence, $G\Omega_p(V)$ is the closure in $(\Omega_p(V),d_p)$ of the space $\Omega_1(V)$ of continuous paths of bounded variation. Next we apply the ideas developed so far in order to define a powerful theory of integration for solving differential equations driven by (geometric) rough paths. 

\subsection{Integration of a one-form along a rough path}\label{sec:integration_one_form}

In order to define what we mean by a solution of a differential equation driven by a rough path, we need a theory of integration for rough paths. The correct notion of integral turns out to be the integral of a one-form along a rough path. We start by defining the concept of almost $p$-rough path.

\begin{definition}
Let $p \geq 1$ be a real number. Let $\omega : \Delta_I \to [0, + \infty)$ be a control. A function $X : \Delta_I \to T^{ \lfloor p \rfloor}(V)$ is called an almost $p$-rough path if 
\begin{enumerate}
    \item it has finite $p$-variation controlled by $w$, i.e.
    \begin{equation}
        \norm{X^i_{s,t}} \leq \frac{\omega(s,t)^{i/p}}{\beta(i/p)!}, \hspace{0.5cm} \forall (s,t) \in \Delta_t, \ \forall i=0,...,\lfloor p \rfloor
    \end{equation}
    \item it is an almost multiplicative functional, i.e. there exists $\theta > 1$ such that
    \begin{equation}
        ||(X_{s,u} \otimes X_{u,t})^i - X_{s,t}^i|| \leq \omega(s,t)^\theta, \ \ \forall (s,t) \in \Delta_I, \forall i=0,...,\lfloor p \rfloor
    \end{equation}
\end{enumerate}
To be specific we say that $X$ is a $\theta$-almost $p$-rough path controlled by $\omega$.
\end{definition}

For any almost $p$-rough path there exists a unique $p$-rough path that it is close to it in some specific sense, as stated in the next theorem.  

\begin{theorem}\cite[theorem 4.3 \& 4.4]{lyons2004differential}\label{thm:unique_rough_path}
Consider two scalars $p\geq 1$ and $\theta \geq 1$,  a control $\omega : \Delta_T \to [0,+\infty)$. Let $X: \Delta_T \to T^{ \lfloor p \rfloor}(V)$ be a $\theta$-almost $p$-rough path controlled by $\omega$. Then there exists a unique $p$-rough path $\widetilde X: \Delta_T \to  T^{ \leq \lfloor p \rfloor}(V)$ such that
\begin{equation}
\sup_{\substack{0\leq s< t \leq T \\  i=0,...,\lfloor p \rfloor}} \frac{||\widetilde X^i_{s,t} - X^i_{s,t}||}{\omega(s,t)^\theta} < +\infty
\end{equation}
Furthermore, the map that to an almost $p$-rough path associates a rough path is continuous in $p$-variation.
\end{theorem}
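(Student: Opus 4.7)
The plan is to construct $\widetilde X$ as the limit, under refinement, of ``partition products'' of $X$, and then verify the uniqueness and continuity properties. For a partition $\mathcal{D} = \{s = t_0 < t_1 < \ldots < t_n = t\}$ of $[s,t]$, set
\begin{equation*}
\pi(X,\mathcal{D}) := X_{t_0,t_1} \otimes X_{t_1,t_2} \otimes \cdots \otimes X_{t_{n-1},t_n} \in T^{\lfloor p \rfloor}(V),
\end{equation*}
and define $\widetilde X_{s,t}$ as the limit of $\pi(X,\mathcal{D})$ along a sequence of refinements. Chen's identity for $\widetilde X$ is immediate by concatenating partitions; the substance is to prove that the limit exists, is quantitatively close to $X_{s,t}$, has finite $p$-variation, and is unique.

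The technical heart is an elementary combinatorial observation. In any partition with $n+1 \geq 3$ points, super-additivity of $\omega$ forces the existence of an interior index $k$ for which $\omega(t_{k-1}, t_{k+1}) \leq \frac{2}{n-1}\omega(s,t)$. Removing such a point and invoking the $\theta$-almost-multiplicativity of $X$ yields, at each tensor level $i \in \{1,\ldots,\lfloor p \rfloor\}$, a bound of order $\omega(t_{k-1},t_{k+1})^\theta$ on the change in $\pi(X,\mathcal{D})^i$ (the lower-level factors being controlled by the finite $p$-variation hypothesis on $X$). Iterating the point-removal down to the trivial partition $\{s,t\}$, the total error is majorised by $\omega(s,t)^\theta \sum_{n \geq 2} \bigl(\tfrac{2}{n-1}\bigr)^\theta$, which converges precisely because $\theta > 1$. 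This yields both the uniform closeness bound
\begin{equation*}
\|\pi(X,\mathcal{D})^i - X^i_{s,t}\| \lesssim \omega(s,t)^\theta
\end{equation*}
and, by comparing two partitions via their common refinement, a Cauchy property that defines $\widetilde X$ as the limit. The finite $p$-variation bound on $\widetilde X$ follows from that of $X$ plus the $\omega^\theta$ correction, which is absorbed at each tensor level since $\theta > 1$.

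For uniqueness, I would observe that two candidate limits $\widetilde X, \widetilde X'$ yield a multiplicative functional $Z := \widetilde X \otimes (\widetilde X')^{-1}$ whose $i$-th level component is still controlled by $\omega^\theta$. An induction on the level index, using Chen's identity together with super-additivity of $\omega$ (a Young-type regularity argument), then forces $Z^i \equiv 0$ for every $i$, so $\widetilde X = \widetilde X'$. Continuity of $X \mapsto \widetilde X$ follows from running the same refinement scheme on two nearby almost $p$-rough paths $X, Y$ in parallel and telescoping: $\pi(X,\mathcal{D})^i - \pi(Y,\mathcal{D})^i$ is controlled by $d_p(X, Y)$ uniformly in $\mathcal{D}$, and this bound passes to the limit. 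The main obstacle I anticipate is the level-by-level bookkeeping: at each tensor level $i$, the product $\pi(X,\mathcal{D})^i$ mixes all lower-level components of $X$, and one must simultaneously maintain the $p$-variation estimate and the $\omega^\theta$ closeness through the induction --- this is exactly what the combinatorial lemma above is designed to manage.
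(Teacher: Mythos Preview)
The paper does not prove this theorem; it is quoted verbatim as a background result from \cite[Theorems 4.3 \& 4.4]{lyons2004differential} and used as a black box in the construction of the rough integral (\cref{def:rough_integral}). There is therefore no ``paper's own proof'' to compare against.

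That said, your sketch is faithful to the standard argument in the cited reference. The partition-product construction, the combinatorial lemma (choose an interior point $t_k$ with $\omega(t_{k-1},t_{k+1}) \leq \tfrac{2}{n-1}\omega(s,t)$ and remove it), the telescoping sum $\sum_{n\geq 2}(2/(n-1))^\theta$ controlled by $\theta>1$, and the uniqueness via a level-by-level Young-type argument are exactly the ingredients of Lyons' proof. Two small remarks: (i) the hypothesis should really be $\theta>1$ (as in the definition of an almost $p$-rough path you quote just above), not $\theta\geq 1$, and your convergence step relies on this; (ii) for uniqueness, forming $Z=\widetilde X\otimes(\widetilde X')^{-1}$ is fine but slightly heavier than needed --- the cited proof works directly with the difference $\widetilde X^i-\widetilde X'^i$ and inducts on $i$, using multiplicativity to split over a partition and the $\omega^\theta$ bound plus super-additivity to force each level to vanish. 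Your version is equivalent. The ``main obstacle'' you flag (simultaneous level-by-level bookkeeping of the $p$-variation and $\omega^\theta$ estimates) is real and is handled in the reference exactly as you outline.
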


\begin{remark}
    It is important to note that, if $X$ and $Y$ are two $p$-rough paths with $p \geq 2$, and $f$ is a map, even the smoothest one, it is not possible to give a sensible meaning to something like $\int f(Y)dX$. In effect, as shown by the simple example $\int_s^tY_udX_u = \underset{s<u_1<u_2<t}{\int \int}dY_{u_1}dX_{u_2}$, the definition of such object would necessarily involve some cross-iterated integrals of $X$ and $Y$, which are not available directly in the data of $X$ and $Y$. In other words, two rough paths $X \in \Omega_p(V)$ and $Y \in \Omega_p(W)$ do not determine the joint path $(X,Y)$ as a rough path in $\Omega_p(V\oplus W)$. However, if one knows the joint path $Z=(X,Y)$ as a rough path, then the integral $\int f(Y)dX$ can be defined as a special case of $\int \alpha(Z)dZ$, where $\alpha$ is a one-form. This is the object we are going to construct in the sequel.
\end{remark}

We will define the integral of a one-form along a rough path as the unique $p$-rough path associated to, in the sense of  \cref{thm:unique_rough_path}, a specific almost $p$-rough path that we now construct. \medbreak

Let $\gamma>p\geq 1$ be scalars and let $Z : \Delta_T \to T^{ \lfloor p \rfloor}(V)$ be a $p$-rough path controlled by some control $\omega$. Let $\alpha : V \to L(V,W)$ be a $Lip(\gamma-1)$ function (\cref{def:Lip_form}) that we will call a one-form from now on. Hence, we are given $\alpha^0=\alpha$ and at least $\lfloor p \rfloor - 1$ auxiliary functions $\alpha^1, ..., \alpha^{\lfloor p \rfloor - 1}$ such that for any $k=1,...,\lfloor p \rfloor-1$
\begin{equation}
\alpha^k : V \to L(V^{\otimes k}, L(V, W)), \hspace{0.5cm} k=1 ... \lfloor p \rfloor - 1
\end{equation}
satisfying the Taylor-like expansion: $\forall x,y \in V$
\begin{equation}\label{eqn}
\alpha(y) =\alpha(x) + \sum_{k=1}^{\lfloor p \rfloor - 1} \alpha^k(x)\frac{(y-x)^{\otimes k}}{k!} + R_0(x,y)
\end{equation}
with $||R_0(x,y)|| \leq ||\alpha||_{Lip}||x-y||^{\gamma - 1}$, where $||\alpha||_{Lip}$ is the Lipschitz constant of $\alpha$. We are going to look for an approximation of $\int_s^t \alpha(Z_u)dZ_u$ in the form of a Taylor expansion. For any $(s,u) \in \Delta_T$ we can write
\begin{equation}\label{eqn2}
\alpha(Z_u) = \sum_{k=0}^{\lfloor p \rfloor -1}\alpha^k(Z_s)Z^k_{s,u} + R_0(Z_s, Z_u)
\end{equation}
Now, by definition of the iterated integrals of $Z$, the following relation holds for any $k \geq 0$ 
\begin{equation}\label{eqn3}
\int_s^t Z^k_{s,u} dZ_u  = Z^{k+1}_{s,t}
\end{equation}
where with the multiplication $Z^k_{s,u} dZ_u$ we mean $Z^k_{s,u} \otimes dZ_u$. Combining \cref{eqn2} and \cref{eqn3} we obtain
\begin{equation}
\int_s^t\alpha(Z_u) dZ_u = \sum_{k=0}^{\lfloor p \rfloor - 1}\alpha^k(Z_s)Z^{k+1}_{s,t} + \int_s^tR_0(Z_s, Z_u)dZ_u
\end{equation}

Let us now define the following $W$-valued path 
\begin{equation}
Y_{s,t}^1 := \sum_{k=0}^{\lfloor p \rfloor - 1}\alpha^k(Z_s)Z^{k+1}_{s,t}
\end{equation}
and let us compute the higher order iterated integrals of $Y$ in such a way that it becomes an almost rough path. For any $n \geq 2$ it can be shown after some tedious calculations that
\begin{equation}\label{eqn:Y_integral_one_form}
Y^n_{s,t} = \sum_{k_1,...,k_n=1}^{\lfloor p \rfloor} \alpha^{k_1-1}(Z_s) ... \alpha^{k_n-1}(Z_s) \underset{s<u_1<...<u_n<t}{\int ... \int}dZ_{s,u_1}^{k_1} \otimes ... \otimes dZ_{s,u_n}^{k_n}
\end{equation}
It turns out that the $Y^n$ we have just defined is an almost rough path, as stated in the next theorem. We will use this as our approximation for $\int \alpha(Z)dZ$.

\begin{theorem}\cite[Theorem 4.6]{lyons2007extension}\label{thm:almost_rough_path_Y}
Let $Z : \Delta_T \to T^{ \lfloor p \rfloor}(V)$ be a geometric $p$-rough path and $\alpha : V \to L(V,W)$ be a $Lip(\gamma-1)$ one-form for some $\gamma > p$. Then $Y:\Delta_T \to T^{\lfloor p \rfloor}(W)$ defined for all $(s,t) \in \Delta_T$ and any $n \geq 1$ by equation \cref{eqn:Y_integral_one_form} is a $\frac{\gamma}{p}$-almost $p$-rough path.
\end{theorem}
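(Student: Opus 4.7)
The proof reduces to verifying the two defining properties of a $(\gamma/p)$-almost $p$-rough path, namely a $p$-variation estimate controlled by some control $\tilde\omega$, and almost multiplicativity with exponent $\theta = \gamma/p > 1$. The $p$-variation bound is the easier of the two: since $\alpha \in Lip(\gamma-1)$ with $\gamma - 1 > \lfloor p \rfloor - 1$, each of the auxiliary maps $\alpha^0, \alpha^1, \ldots, \alpha^{\lfloor p \rfloor - 1}$ is uniformly bounded by $\|\alpha\|_{Lip(\gamma-1)}$. Substituting these pointwise bounds and the factorial decay $\|Z^{k_i}_{s,u_i}\|_{V^{\otimes k_i}} \leq \omega(s,u_i)^{k_i/p}/\beta(k_i/p)!$ from the Extension Theorem into expression \cref{eqn:Y_integral_one_form}, and using that the indices $k_1, \ldots, k_n$ range over a finite set with $k_1 + \cdots + k_n \geq n$, one obtains
\begin{equation*}
\|Y^n_{s,t}\|_{W^{\otimes n}} \leq \frac{C(\alpha,p)\,\omega(s,t)^{n/p}}{\beta(n/p)!}
\end{equation*}
for $n = 1, \ldots, \lfloor p \rfloor$. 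Absorbing $C(\alpha,p)$ into a new control $\tilde\omega$ gives the first property.

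For almost multiplicativity I would compare $Y^n_{s,t}$ term-by-term with $(Y_{s,u} \otimes Y_{u,t})^n = \sum_{i=0}^n Y^i_{s,u} \otimes Y^{n-i}_{u,t}$. The starting point is Chen's identity for the underlying rough path $Z$, which lets me split each factor $Z^{k_j}_{s,u_j}$ appearing in $Y^n_{s,t}$ into a sum over allocations of its integration variables between $[s,u]$ and $[u,t]$, thus rewriting $Y^n_{s,t}$ as a sum indexed by all such allocations. In $(Y_{s,u} \otimes Y_{u,t})^n$ the integrals over $[u,t]$ carry the auxiliary functions $\alpha^{k-1}$ evaluated at $Z_u$ rather than at $Z_s$; the key manoeuvre is to Taylor-expand each $\alpha^{k-1}(Z_u)$ around $Z_s$ using the $Lip(\gamma-1)$ expansion of $\alpha^{k-1}$ (i.e., the definition-\ref{def:Lip_form} expansion applied with base point $Z_s$ and increment $Z_u - Z_s$). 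The polynomial parts of these Taylor expansions, contracted against the relevant iterated integrals of $Z$ over $[s,u]$, reproduce exactly the Chen-split allocations of $Y^n_{s,t}$, so that after cancellation only Taylor remainders of $\alpha^{k-1}$ survive.

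The main obstacle is the combinatorial bookkeeping required to verify that the polynomial parts of all these Taylor expansions match the Chen-splittings one-to-one, so that the full difference $(Y_{s,u}\otimes Y_{u,t})^n - Y^n_{s,t}$ collapses to a sum of remainders. Once this matching is established, each surviving remainder term is controlled by two inputs: the $Lip(\gamma-1)$ remainder estimate $\|R_k(Z_s, Z_u)\| \leq \|\alpha\|_{Lip}\,\|Z_s - Z_u\|^{\gamma - 1 - k}$, bounded in turn by $\omega(s,u)^{(\gamma - 1 - k)/p}$, and the factorial decay of the remaining iterated integrals of $Z$ over $[u,t]$ supplied by the Extension Theorem. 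Summing the exponents of $\omega(s,u)$ and $\omega(u,t)$ in each remainder term and using super-additivity $\omega(s,u) + \omega(u,t) \leq \omega(s,t)$, every term carries a total exponent at least $\gamma/p$, yielding
\begin{equation*}
\|(Y_{s,u} \otimes Y_{u,t})^n - Y^n_{s,t}\|_{W^{\otimes n}} \leq C\,\tilde\omega(s,t)^{\gamma/p}
\end{equation*}
for some constant $C$ depending only on $\alpha$ and $p$. Since $\gamma/p > 1$, this is precisely the almost-multiplicativity condition with $\theta = \gamma/p$, establishing the theorem.
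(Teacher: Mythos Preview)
The paper does not supply its own proof of this theorem: it appears in Appendix~B as background material and is stated with a citation to \cite[Theorem 4.6]{lyons2007extension}, after which the text immediately proceeds to \cref{def:rough_integral}. There is therefore nothing in the paper to compare your proposal against.

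That said, your sketch is the standard argument one finds in the cited reference (and in the Lyons--Caruana--L\'evy lecture notes). The two ingredients you isolate---the $p$-variation bound from uniform boundedness of the $\alpha^{k}$ together with the factorial decay of $Z$, and almost-multiplicativity from Chen's identity for $Z$ combined with the $Lip(\gamma-1)$ Taylor expansion of the $\alpha^{k-1}(Z_u)$ about $Z_s$---are exactly right. One point worth flagging: the iterated integrals
\[
\underset{s<u_1<\cdots<u_n<t}{\int\cdots\int} dZ_{s,u_1}^{k_1}\otimes\cdots\otimes dZ_{s,u_n}^{k_n}
\]
in \cref{eqn:Y_integral_one_form} are themselves coordinates of the (extended) signature of $Z$ at level $k_1+\cdots+k_n$, so their ``Chen splitting'' over the intermediate point $u$ is not literally the splitting of $Z^{k_j}_{s,u_j}$ factor-by-factor as your second paragraph suggests, but rather the multiplicativity of the full signature at level $k_1+\cdots+k_n$. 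The combinatorial matching you allude to is done at that level. With that adjustment the outline is correct, and the conclusion $\theta=\gamma/p$ follows as you say.
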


We can now define the integral $\int \alpha(Z)dZ$ of the one-form $\alpha$ along the rough path $Z$ as the unique rough path associated to the almost rough path $Y$ according to \cref{thm:unique_rough_path}.

\begin{definition}\label{def:rough_integral}
Let $Z, \alpha$ and $Y$ be as in \cref{thm:almost_rough_path_Y}. The unique $p$-rough path $\mathcal{I} : \Delta_T \to T^{ \lfloor p \rfloor}(W)$ associated to $Y$ by \cref{thm:unique_rough_path} is called the integral of the one-form $\alpha$ along the rough path $Z$ and is denoted by $\mathcal{I}_{s,t} := \int_s^t \alpha(Z_u)dZ_u$. 
\end{definition}

\begin{remark}
    As stated in \cite[Theorem 4.12]{lyons2004differential}, the map $Z \mapsto \int \alpha(Z)dZ$ is continuous in $p$-variation from $G\Omega_p(V)$ to $G\Omega_p(W)$.
\end{remark}

\begin{definition}\label{def:linear_image_rough_path}
    Let $A \in L(V,W)$ be a continuous linear map between Banach spaces and $X : \Delta_T \to T^{ \lfloor p \rfloor}(V)$ be a geometric $p$-rough path. The image $A(X)$ of the rough path $X$ by the linear map $A$ is a rough path defined in the following way. $A$ induces a linear map from $V^{\otimes k}$ to $W^{\otimes k}$ which sends $x_1\otimes ... \otimes x_k$ to $Ax_1 \otimes ... \otimes Ax_k$. By linearity of the direct sum defining $T^{ \lfloor p \rfloor}(V)$ and $T^{ \lfloor p \rfloor}(W)$, $A$ can be further extended to a linear map between these two spaces, denoted by $\mathcal{T}(A) \in L\left(T^{ \lfloor p \rfloor}(V), T^{ \lfloor p \rfloor}(W)\right)$. Then, $A(X)$ is defined as the following rough path
\begin{equation}
    A(X)_{s,t} = \mathcal{T}(A)(X_{s,t}), \hspace{0.5cm} \forall (s,t) \in \Delta_T
\end{equation}
\end{definition}

In the next section we will finally describe how to solve CDEs driven by rough paths. 

\subsection{Non-linear CDEs driven by rough paths}\label{sec:non_linear_CDEs}

In this section we explain how to solve CDEs driven by rough paths. As in the classical case of CDEs driven by paths of bounded variation, the existence and uniqueness of the solution depend on assumptions about the smoothness of the vector field.  \medbreak

Let $V,W$ be two Banach spaces and $\gamma>p \geq 1$ be two real numbers. Let $f : W \to L(V,W)$ be a $Lip(\gamma-1)$ one-form. Consider a geometric $p$-rough path $X \in G\Omega_p(V)$ and a point $a \in W$. It is easy to see that when $X$ has finite $1$-variation, the equation
\begin{equation}\label{eqn:CDE_rough_path}
    dY_t = f(Y_t)dX_t, \hspace{0.5cm} Y_0 = a
\end{equation}
is equivalent, up to translation of the initial condition $a \in W$, to the following system
\begin{align}
    dY_t &= f_a(Y_t)dX_t, \hspace{0.5cm} Y_0 = 0 \\
    dX_t &= dX_t
\end{align}
where $f_a(w) = f(w + a)$ for all $w \in W$. Consider now the one-form $h : V \oplus W \to L(V \oplus W, V \oplus W)$ defined as follows 
\begin{equation}\label{eqn:final_one_form}
    h(v,w) = \begin{pmatrix}
                I_V & 0 \\
                f_a(w) & 0 
             \end{pmatrix}
\end{equation}
where $I_V : V \to V$ is the identity map on $V$. Then, when $X \in \Omega_1(V)$ is of bounded variation, solving (\ref{eqn:final_one_form}) is equivalent to finding $Z \in \Omega_1(V \oplus W)$ such that
\begin{equation}
    dZ_t = h(Z_t)dZ_t, \hspace{0.5cm} Z_0 = 0, \ \pi_V(Z) = X
\end{equation}
where $\pi_V$ is the canonical projection of $V\oplus W$ onto $V$. What the next definition states is that this reformulation makes sense even when $X$ is a rough path.

\begin{definition}\label{def:solution_CDE_rough_path}
Consider the notation introduced above. We call $Z \in G\Omega_p(V \oplus W)$ a solution of the differential \cref{eqn:CDE_rough_path}
if the following two conditions hold:
\begin{align}
    &Z_{s,t} = \int_s^t h(Z_u)dZ_u\\
    &\pi_V(Z) = X
\end{align}
where $\pi_V(Z)$ is the image of the rough path $Z$ by the linear map $\pi_V$ in the sense of \cref{def:linear_image_rough_path}. 
\end{definition}

In order to find a solution to the CDE \cref{eqn:CDE_rough_path} we are going to, once again, use Picard iteration. Define $Z(0) = (X,\mathbf{0}) \in G\Omega_p(V \oplus W)$, where $\mathbf{0}$ denotes the $0$-rough path $\mathbf{0}_{s,t} = (1,0,0,...)$. Then, for any $n\geq 0$ we define the sequence of rough paths
\begin{equation}
    Z(n)_{s,t} = \int_s^t h(Z(n))dZ(n)
\end{equation}
Let us now denote $Y(n) = \pi_W(Z(n))$ so that $Z(n) = (X,Y(n))$. One of the main results in the seminal paper \cite{lyons1998differential} is the following Universal Limit Theorem (ULT). Note that for the solution to be unique we require one extra degree of smoothness on $f$.

\begin{theorem}[\textbf{Universal Limit Theorem (ULT)}]\cite[Theorem 5.3]{lyons2004differential}\label{thm:ULT}
Let $p \geq 1$ and $\gamma>p$ be real numbers. Let $f: W \to L(V,W)$ be a $Lip(\gamma)$ function. For all $X \in G\Omega_p(V)$ and all $a \in W$ the equation
\begin{equation}
    dY_t = f(Y_t)dX_t, \hspace{0.5cm} Y_0 = a
\end{equation}
admits a unique solution $Z=(X,Y) \in G\Omega_p(V \oplus W)$ in the sense of \cref{def:solution_CDE_rough_path}. Furthermore, the rough path $Y$ is the limit of of the sequence of rough paths $Y(n)$ defined above, and the mapping $I_f : G\Omega_p(V) \times W \to G\Omega_p(W)$ which sends $(X,a)$ to $Y$ is continuous in $p$-variation.
\end{theorem}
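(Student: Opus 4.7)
My plan is to establish existence, uniqueness, and continuity via a Picard fixed-point argument carried out directly in the rough-path setting, leveraging the integration theory of one-forms along geometric rough paths summarized in \cref{def:rough_integral}. The starting observation is that the augmented one-form $h:V\oplus W\to L(V\oplus W,V\oplus W)$ defined in \cref{eqn:final_one_form} inherits $Lip(\gamma)$ regularity from $f$: indeed $h$ is a block map whose only nontrivial block is $f_a$, so $\|h\|_{Lip(\gamma)}\leq 1+\|f\|_{Lip(\gamma)}$. Hence, by \cref{thm:almost_rough_path_Y} and the subsequent \cref{def:rough_integral}, for every $Z\in G\Omega_p(V\oplus W)$ the Picard map
\begin{equation*}
\Phi(Z)_{s,t} \;=\; \int_s^t h(Z_u)\,dZ_u
\end{equation*}
is a well-defined geometric $p$-rough path in $V\oplus W$, and the projection $\pi_V\circ\Phi(Z)=X$ is preserved (since the top row of $h$ is the identity on $V$). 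So the problem reduces to finding a fixed point of $\Phi$ inside the affine subset $\{Z\in G\Omega_p(V\oplus W):\pi_V(Z)=X\}$.

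Next I would run Picard iteration from the initial datum $Z(0)=(X,\mathbf{0})$ and the iterates $Z(n+1)=\Phi(Z(n))$ described just before the theorem. The core step is a \emph{local contraction estimate}: on a sub-simplex $[s,t]\subset[0,T]$ on which the $p$-variation control $\omega(s,t)$ is small enough, $\Phi$ is a strict contraction in the $d_p$-metric. This is the analytic heart of the proof, and the estimate comes from combining (i) the factorial decay in the Extension Theorem \ref{thm:extension_theorem} together with the fact that, when one forms $\int h(Z)\,dZ$, the almost–rough-path approximation of \cref{eqn:Y_integral_one_form} expresses each level as a polynomial in the levels of $Z$ with coefficients controlled by derivatives of $h$, and (ii) the $Lip(\gamma)$ Taylor remainder of $h$, which gives a factor of $\omega(s,t)^{(\gamma-p)/p}$ times the distance $d_p(Z,Z')$ when comparing $\Phi(Z)$ and $\Phi(Z')$. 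Choosing the interval short enough that this factor is $<1$ yields a contraction, hence a unique fixed point $Z^\ast$ on that sub-simplex by Banach's theorem, and the sequence $Z(n)$ converges to $Z^\ast$ in $p$-variation. Chen's identity then allows one to concatenate finitely many such local solutions to cover $[0,T]$, producing a global $Z\in G\Omega_p(V\oplus W)$ with $\pi_V(Z)=X$ and $Z=\Phi(Z)$; by \cref{def:solution_CDE_rough_path} this is a solution.

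Uniqueness follows from the same local contraction applied to any two candidate solutions $Z,Z'$ agreeing at $s$: on each short interval the map $\Phi$ has a unique fixed point, so $Z=Z'$ there, and one extends along the whole interval by the standard continuation argument based on super-additivity of the control. For the continuity of $I_f$, I would argue that the contraction constants in the fixed-point argument above depend only on $\|f\|_{Lip(\gamma)}$ and the $p$-variation of $X$, both of which vary continuously on bounded sets. Combined with the continuity of $(X,Z)\mapsto \int h(Z)dZ$ in the product $p$-variation topology (which is exactly the continuity of rough integration recalled in \cref{thm:unique_rough_path}), this yields that the fixed point $Y=\pi_W(Z^\ast)$ depends continuously on $(X,a)$, first locally and then globally by concatenation.

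The main obstacle is the local contraction estimate for $\Phi$: although the needed bounds are packaged inside \cref{thm:almost_rough_path_Y} and the uniqueness-of-associated-rough-path statement in \cref{thm:unique_rough_path}, extracting a genuine contraction factor requires carefully tracking the constants $\beta_p$, the $Lip(\gamma-1)$ norm of $h$, and the exponent $(\gamma-p)/p>0$ that provides the small parameter on short sub-simplices. Once the contraction estimate is in place, existence, uniqueness, and continuity follow from standard Banach fixed-point reasoning combined with Chen-identity concatenation.
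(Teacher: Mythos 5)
The paper does not prove this statement: it is quoted verbatim from \cite[Theorem 5.3]{lyons2004differential} as background for \cref{sec:geom}, so there is no in-paper argument to compare against. Your outline does follow the strategy of the cited source — Picard iteration $Z(n+1)=\int h(Z(n))\,dZ(n)$ with the augmented one-form $h$ of \cref{eqn:final_one_form}, convergence via a smallness factor on short sub-simplices, concatenation by Chen's identity, and continuity of $I_f$ from the continuity of rough integration. Your observation that $h$ inherits $Lip(\gamma)$ regularity from $f$ and that $\pi_V\circ\Phi(Z)=X$ is preserved is correct, and the role of the extra degree of smoothness ($Lip(\gamma)$ rather than $Lip(\gamma-1)$) in the uniqueness part is correctly identified.

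The gap is that the step you label the ``main obstacle'' is not an obstacle to be noted but the entire content of the theorem, and your proposal asserts rather than establishes it. Concretely: (i) you need an invariance statement (a closed, $d_p$-complete set of geometric rough paths over $X$, with a uniform control, that $\Phi$ maps into itself), which requires a priori bounds on all $\lfloor p\rfloor$ levels of the iterates, not just level one; (ii) the claimed Lipschitz estimate $d_p(\Phi(Z),\Phi(Z'))\lesssim \omega(s,t)^{(\gamma-p)/p}\,d_p(Z,Z')$ is not a consequence of \cref{thm:almost_rough_path_Y} or \cref{thm:unique_rough_path} as stated — those give existence and continuity of the integral, not a quantitative local Lipschitz bound with a constant that degenerates as $\omega(s,t)\to 0$. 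Deriving it requires comparing the two almost-rough-paths built from $Z$ and $Z'$ level by level, using the $Lip(\gamma)$ Taylor remainders of $h$ and tracking the cross-terms between the $X$- and $Y$-components of $Z_{s,t}$ at levels $2,\dots,\lfloor p\rfloor$ (which differ between $Z$ and $Z'$ even though $\pi_V(Z)=\pi_V(Z')=X$); this is several pages of estimates in the reference and is precisely why the published proof controls $d_p(Z(n+1),Z(n))$ by an explicit geometric sequence rather than invoking Banach's theorem on an abstract contraction. (iii) A smaller point: the continuity of $Z\mapsto\int\alpha(Z)\,dZ$ that you want is \cite[Theorem 4.12]{lyons2004differential} (quoted after \cref{def:rough_integral}), not \cref{thm:unique_rough_path}, and continuity of the fixed point in $(X,a)$ needs the contraction constants to be locally uniform in $X$, which again rests on the unproved estimate. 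Until (i) and (ii) are carried out, the proposal is a correct roadmap but not a proof.
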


\section{Cross-integrals of the signature kernel}

In this final section of the appendix, we provide another interpretation for the double integral of \cref{def:double_integral_I}. We begin by recalling a generalization of the Extension Theorem \ref{thm:extension_theorem}.

\begin{definition}
Let $N \in \mathbb{N}$ be an integer. We denote by $G^N(V) \subset T^N(V)$ the free nilpotent Lie group of step $N$ over $V$.
\end{definition}

\begin{theorem}\cite[Theorem 14]{lyons2007extension}\label{thm:extension}
    Let $p>1$. Let $K$ be a closed normal subgroup of $G^{\lfloor p \rfloor}(V)$. If $x$ is a $(G^{\lfloor p \rfloor}(V)/ K)$-valued continuous path of finite $p$-variation, with $p \not\in \mathbb{N}\setminus \{0,1\}$, then there exists a continuous $G^{\lfloor p \rfloor}(V)$-valued geometric $p$-rough path $y$ such that
    \begin{equation*}
        \pi_{G^{\lfloor p \rfloor}(V), G^{\lfloor p \rfloor}(V)/ K}(y) = x
    \end{equation*}
    where $\pi_{G^{\lfloor p \rfloor}(V), G^{\lfloor p \rfloor}(V)/ K}$ is the canonical homomorphism (projection) from $G^{\lfloor p \rfloor}(V)$ to \\ $G^{\lfloor p \rfloor}(V) / K$.
\end{theorem}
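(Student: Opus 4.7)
The strategy is to adapt the dyadic lifting argument of Lyons and Victoir. Indeed, the special case $K = \{e\}$ recovers the original extension result for $V$-valued paths; the task here is to push the same machinery through in the presence of a nontrivial closed normal subgroup $K$.

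First, I would approximate $x$ by piecewise geodesic paths. For each dyadic partition $D_n$ of the time interval, replace $x$ on each sub-interval $[t_k^n, t_{k+1}^n]$ by a Carnot--Carath\'eodory geodesic in the quotient group $G^{\lfloor p \rfloor}(V)/K$ joining $x_{t_k^n}$ to $x_{t_{k+1}^n}$; call the resulting path $x^n$. Because $K$ is a closed normal subgroup of the Lie group $G^{\lfloor p \rfloor}(V)$, the quotient is a smooth manifold and the canonical projection $\pi$ admits (locally, and measurably on the whole group) a section. Using this section, each geodesic segment of $x^n$ can be lifted to a horizontal (Carnot--Carath\'eodory geodesic) segment in $G^{\lfloor p \rfloor}(V)$; concatenating these segments yields a continuous path $y^n \in G^{\lfloor p \rfloor}(V)$ with $\pi(y^n) = x^n$ by construction.

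The hard part is to bound the $p$-variation of $y^n$ uniformly in $n$. Using a homogeneous Carnot--Carath\'eodory norm on $G^{\lfloor p \rfloor}(V)$, one aims to prove $\|y^n\|_{p,[0,T]} \leq C\,\|x\|_{p,[0,T]}$ with $C$ depending only on $p$ and on the geometry of $K$ inside $G^{\lfloor p \rfloor}(V)$. The non-integrality hypothesis $p \notin \mathbb{N}\setminus\{0,1\}$ is essential here: the strict gap $\lfloor p \rfloor < p$ provides the slack needed to absorb the extra ``twist'' introduced at each dyadic concatenation, exactly as in the classical Lyons--Victoir proof. Once uniform $p$-variation control is secured, a compactness argument in $p'$-variation for some $p' \in (p, \lfloor p \rfloor + 1)$ (of Arzel\`a--Ascoli type, using interpolation between the uniform bound and equicontinuity in the weaker topology) extracts a convergent subsequence $y^{n_k} \to y$. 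Continuity of the projection $\pi$ and the convergence $x^{n_k} \to x$ in $p'$-variation then give $\pi(y) = x$, while geometricity of $y$ is automatic because each $y^{n_k}$ is of bounded variation and geometric $p$-rough paths form a closed subspace of $\Omega_p$ by \cref{def:geom_rough_path}. The core technical obstacle is the uniform $p$-variation bound on the lifts: one must show that, despite the freedom in choosing the section of $\pi$, an admissible choice can always be made so that the lift does not accumulate excess length beyond what is controlled by $\|x\|_{p,[0,T]}$, and this is exactly where the non-integer condition on $p$ is used.
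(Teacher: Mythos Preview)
The paper does not prove this theorem at all: it is quoted verbatim as \cite[Theorem 14]{lyons2007extension} and used as a black box in the appendix on cross-integrals. There is therefore no ``paper's own proof'' to compare against; the authors simply defer to Lyons--Victoir.

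Your sketch is in the spirit of the original Lyons--Victoir argument (dyadic geodesic approximation, lift through a section of the quotient map, uniform $p$-variation control exploiting $p\notin\mathbb{N}$, compactness extraction), so in that sense you are reproducing the source the paper cites rather than offering an alternative. One small slip: the case $K=\{e\}$ is trivial (the quotient is the full group and $y=x$ works), not the classical Lyons--Victoir extension of a $V$-valued path. The latter corresponds to $K=\exp\big(\bigoplus_{i=2}^{\lfloor p\rfloor}V_i\big)$, exactly as in the Corollary immediately following the theorem in the paper. Beyond that, your outline is a fair high-level summary of the cited proof, though the genuinely delicate step---making a choice of lift at each dyadic stage so that the accumulated $p$-variation stays bounded, and showing the non-integrality of $p$ gives precisely the summability needed---is asserted rather than carried out.
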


\begin{corollary}
    If $p \in \mathbb{R}_{\geq 1} \setminus \{2, 3, ...\}$, then a continuous $V$-valued smooth path of finite $p$-variation can be lifted to a geometric $p$-rough path
\end{corollary}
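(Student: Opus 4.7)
The plan is to deduce this corollary as a direct application of Theorem \ref{thm:extension}. First I would identify an appropriate closed normal subgroup $K$ of the free nilpotent Lie group $G^{\lfloor p \rfloor}(V)$ so that the quotient $G^{\lfloor p \rfloor}(V)/K$ is isomorphic (as a topological group) to $V$ itself. The natural candidate is the kernel of the level-$1$ projection
\begin{equation*}
    \pi : G^{\lfloor p \rfloor}(V) \longrightarrow V, \qquad (1,a_1,a_2,\ldots,a_{\lfloor p \rfloor}) \longmapsto a_1.
\end{equation*}
I would check that $\pi$ is a continuous group homomorphism: since the truncated tensor multiplication of two elements $(1,a_1,a_2,\ldots)$ and $(1,b_1,b_2,\ldots)$ has first-level component $a_1+b_1$, the map $\pi$ intertwines the non-commutative $\otimes$-product on $G^{\lfloor p \rfloor}(V)$ with the additive group structure on $V$. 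Consequently $K := \ker \pi$ is automatically a closed normal subgroup, and $\pi$ descends to an isomorphism $G^{\lfloor p \rfloor}(V)/K \cong V$.

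Next I would interpret a continuous $V$-valued path $x$ of finite $p$-variation as a continuous $(G^{\lfloor p \rfloor}(V)/K)$-valued path of finite $p$-variation via this isomorphism. Since the hypothesis $p \in \mathbb{R}_{\geq 1} \setminus \{2,3,\ldots\}$ is exactly the condition $p \not\in \mathbb{N}\setminus\{0,1\}$ required by Theorem \ref{thm:extension}, the theorem produces a continuous geometric $p$-rough path $y \in G\Omega_p(V)$ whose canonical projection $\pi_{G^{\lfloor p \rfloor}(V),\, G^{\lfloor p \rfloor}(V)/K}(y)$ coincides with $x$. That is precisely the statement that $x$ has been lifted to a geometric $p$-rough path.

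The main (and essentially only) technical obstacle is the identification of $K$ as a closed normal subgroup and the verification that the quotient homomorphism really agrees with the ``take the first-level tensor'' projection; once this algebraic bookkeeping is in place, the corollary follows immediately from the generalized extension theorem. A subtlety worth flagging is that the word ``smooth'' in the statement should be read as ``continuous,'' since the only regularity hypothesis actually used is continuity plus finite $p$-variation, which is exactly the hypothesis of Theorem \ref{thm:extension}; no further differentiability is needed or exploited.
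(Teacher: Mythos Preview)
Your proposal is correct and takes essentially the same approach as the paper: both apply Theorem~\ref{thm:extension} after identifying the appropriate closed normal subgroup $K$. The paper writes $K = \exp\{\bigoplus_{i=2}^{\lfloor p \rfloor} V_i\}$ (with $V_1 = V$ and $V_{i+1} = [V,V_i]$), which is exactly the kernel of your level-$1$ projection $\pi$, so the two descriptions of $K$ coincide.
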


\begin{proof}
    It suffices to apply Theorem \ref{thm:extension} to $K = \exp\{\bigoplus_{i=2}^{\lfloor p \rfloor} V_i\}$, where $v_0 = V$ and $V_{i+1} = [V, V_{i}]$, with $[\cdot, \cdot]$ being the Lie bracket.
\end{proof}

Without loss of generality let's assume $q \geq p$. $\mathbb{X}$ is a geometric $p$-rough path, therefore by the Extension Theorem $\mathbb{X}$ can be lifted uniquely to a geometric $q$-rough path $\mathbb{X}'$. Let $R$ be any compact time interval such that such that there exists two continuous and increasing surjections $\psi_1 : R \to I$ and $\psi_2 : R \to J$. Let $\widetilde{\mathbb{X}} = \mathbb{X}' \circ \psi_1$ and $\widetilde{\mathbb{Y}} = \mathbb{Y} \circ \psi_2$. \medbreak

Consider the path $\mathbb{Z}: R \to G^{\lfloor q \rfloor}(V) \times G^{\lfloor q \rfloor}(V)$ defined as
\begin{equation}
    \mathbb{Z}: t \mapsto (\widetilde{\mathbb{X}}_t, \widetilde{\mathbb{Y}}_t)
\end{equation}
    
$\mathbb{Z}$ is a continuous, $(G^{\lfloor q \rfloor}(V) \times G^{\lfloor q \rfloor}(V))$-valued path of finite $q$-variation. Firstly, we consider the \textit{product of algebras} $T^{\lfloor q \rfloor}(V) \times T^{\lfloor q \rfloor}(V)$, where the product of elements is defined by the following operation: $(f_1, g_1)(f_2, g_2) = (f_1 \otimes f_2, g_1 \otimes g_2)$. \medbreak

Now consider the free tensor algebra $T^{\lfloor q \rfloor}(V \bigoplus V)$ over the vector space $V \bigoplus V$. Let $\phi: V \to T^{\lfloor q \rfloor}(V)$ be the canonical inclusion of $V$ into $T^{\lfloor q \rfloor}(V)$ and let $\psi : T^{\lfloor q \rfloor}(V) \to T^{\lfloor q \rfloor}(V) \times T^{\lfloor q \rfloor}(V)$ be the linear map defined as $\psi(T) = (T, T), \forall T \in T(V)$. \medbreak

Now let's consider the map $\eta = \psi \circ \phi : V \to T^{\lfloor q \rfloor}(V) \times T^{\lfloor q \rfloor}(V)$. By the universal property of $\bigoplus$ there exists a unique algebra homomorphism $\Phi : V \bigoplus V \to T^{\lfloor q \rfloor}(V) \times T^{\lfloor q \rfloor}(V)$ such that $\Phi \circ \psi = \eta$. 
\[
\begin{tikzcd}[row sep=2.5em]
 & V \bigoplus V \arrow{dr}{\Phi} \\
V \arrow{ur}{\psi} \arrow{rr}{\eta} && T^{\lfloor q \rfloor}(V) \times T^{\lfloor q \rfloor}(V)
\end{tikzcd}
\]

But now $T^{\lfloor q \rfloor}(V \bigoplus V)$ has also the universal property, therefore there exists a unique algebra homomorphism $\Psi : T^{\lfloor q \rfloor}(V \bigoplus V) \to T^{\lfloor q \rfloor}(V) \times T^{\lfloor q \rfloor}(V)$ such that $\Psi \circ \beta = \Phi$, where $\beta$ is the canonical inclusion of $V \bigoplus V$ into $T^{\lfloor q \rfloor}(V \bigoplus V)$. 
\[
\begin{tikzcd}[row sep=2.5em]
 & T^{\lfloor q \rfloor}(V \bigoplus V) \arrow{dr}{\Psi} \\
V \bigoplus V \arrow{ur}{\beta} \arrow{rr}{\Phi} && T^{\lfloor q \rfloor}(V) \times T^{\lfloor q \rfloor}(V)
\end{tikzcd}
\]

Note that $G^{\lfloor q \rfloor}(V) \times G^{\lfloor q \rfloor}(V)$ is a group embedded in the product algebra $T^{\lfloor q \rfloor}(V) \times T^{\lfloor q \rfloor}(V)$ and $G^{\lfloor q \rfloor}(V\bigoplus V)$ is a group embedded in the tensor algebra $T^{\lfloor q \rfloor}(V \bigoplus V)$. Let $\pi$ be the map $\Psi$ restricted to $G^{\lfloor q \rfloor}(V \bigoplus V)$. Given that $G^{\lfloor q \rfloor}(V) \times G^{\lfloor q \rfloor}(V) \subset \pi(G^{\lfloor q \rfloor}(V \oplus V))$, this map is a surjective group-homomorphism. Therefore, by the \textit{First Group Isomorphism Theorem} we have that $Ker(\pi) \triangleleft G^{\lfloor q \rfloor}(V \oplus V)$, and 
    \begin{equation*}
        G^{\lfloor q \rfloor}(V \oplus V) / Ker(\pi) \simeq G^{\lfloor q \rfloor}(V) \times G^{\lfloor q \rfloor}(V)
    \end{equation*}
    
By Lemma \ref{thm:extension} there exists a continuous $G^{\lfloor q \rfloor}(V \oplus V)$-valued geometric $q$-rough path $\widetilde{\mathbb{Z}}$ such that $\pi(\widetilde{\mathbb{Z}}) = \mathbb{Z}$. Expanding out coordinate-wise we obtain 
    
    {\scriptsize
    \begin{align}
        \int_{s=u}^{u'} \int_{t=v}^{v'} k(\mathbb{X}_{u, s},\mathbb{Y}_{v, t}) \langle d\mathbb{X}_s, d\mathbb{Y}_t \rangle &= \sum_{n=0}^{\lfloor q \rfloor}\sum_{\tau \in \{1,..., d\}^n} \int_{s=u}^{u'} \int_{t=v}^{v'} k(\mathbb{X}_s,\mathbb{Y}_t) d\mathbb{X}_s^\tau d\mathbb{Y}_t^\tau \nonumber \\
        &= \sum_{n=0}^{\lfloor q \rfloor}\sum_{\tau \in \{1,..., d\}^n} \int_{s=u}^{u'} \int_{t=v}^{v'} \langle S(\mathbb{X}_{u, s}), S(\mathbb{Y}_{v, t}) \rangle d\mathbb{X}_s^\tau d\mathbb{Y}_t^\tau \nonumber \\
        &= \sum_{m=0}^\infty \sum_{\omega \in \{1,..., d\}^m} \sum_{n=0}^{\lfloor q \rfloor}\sum_{\tau \in \{1,..., d\}^n} \int_{s=u}^{u'} \int_{t=v}^{v'} S(\mathbb{X}_{u, s})^\omega S(\mathbb{Y}_{v, t})^\omega d\mathbb{X}_s^\tau d\mathbb{Y}_t^\tau
        \nonumber \\
 &= \sum_{m=0}^\infty \sum_{\omega \in \{1,..., d\}^m} \sum_{n=0}^{\lfloor q \rfloor}\sum_{\tau \in \{1,..., d\}^n}  \Big( \int_{s=u}^{u'} S(\mathbb{X}_{u, s})^\omega d\mathbb{X}_s^\tau \Big) \Big( \int_{t=v}^{v'} S(\mathbb{Y}_{v, t})^\omega d\mathbb{Y}_t^\tau \Big) \label{eqn:fact}
    \end{align}
    }

Note that all the cross-integrals of $S(\mathbb{X})$ and $S(\mathbb{Y})$ do not contribute in the above expression, which nicely factors into two separate integrals: expression (\ref{eqn:fact}) tells us that the rough path $\widetilde{\mathbb{Z}}$ does not depend on the lift used in the extension (from the joint path $\mathbb{Z}$ to the rough path $\widetilde{\mathbb{Z}}$). The terms involved in the infinite sum on the right-hand-side of the equation (\ref{eqn:fact}) are all $\mathbb{R}$-projections of the signature of the signautre of the rough paths $\mathbb{X}$ and $\mathbb{Y}$.

\end{document}